\newtheorem{thm}{Theorem}[section]
\newtheorem{prop}[thm]{Proposition}
\newtheorem{lem}[thm]{Lemma}
\newtheorem{cor}[thm]{Corollary}
\theoremstyle{definition}
\newtheorem{defn}[thm]{Definition}
\newtheorem{example}[thm]{Example}
\theoremstyle{remark}
\newtheorem{rem}[thm]{Remark}
\numberwithin{equation}{section}
\numberwithin{thm}{section}
\DeclareMathOperator{\Hom}{Hom}
\DeclareMathOperator{\End}{End}
\DeclareMathOperator{\Aut}{Aut}
\DeclareMathOperator{\Fr}{Fr}
\DeclareMathOperator{\Spec}{Spec}
\DeclareMathOperator{\inv}{inv}
\DeclareMathOperator{\Lie}{Lie}
\DeclareMathOperator{\chr}{char}
\DeclareMathOperator{\Gal}{Gal}
\DeclareMathOperator{\GL}{GL}
\DeclareMathOperator{\Cl}{Cl}
\DeclareMathOperator{\Tr}{Tr}
\DeclareMathOperator{\diag}{diag}
\DeclareMathOperator{\Nr}{Nr}
\DeclareMathOperator{\Ram}{Ram}
\DeclareMathOperator{\rG}{G}
\newcommand{\sep}{\mathrm{sep}}
\newcommand{\opp}{\mathrm{opp}}
\newcommand{\alg}{\mathrm{alg}}
\newcommand{\ab}{\mathrm{ab}}
\newcommand{\fL}{\mathfrak{L}}
\newcommand{\fP}{\mathfrak{P}}
\newcommand{\fQ}{\mathfrak{Q}}
\newcommand{\fY}{\mathfrak{Y}}
\newcommand{\fd}{\mathfrak{d}}
\newcommand{\fl}{\mathfrak{l}}
\newcommand{\fm}{\mathfrak{m}}
\newcommand{\fn}{\mathfrak{n}}
\newcommand{\fp}{\mathfrak{p}}
\newcommand{\fq}{\mathfrak{q}}
\newcommand{\fy}{\mathfrak{y}}
\newcommand{\cC}{\mathcal{C}}
\renewcommand{\cD}{\mathcal{D}}
\newcommand{\cN}{\mathcal{N}}
\newcommand{\cP}{\mathcal{P}}
\newcommand{\cT}{\mathcal{T}}
\newcommand{\cW}{\mathcal{W}}
\newcommand{\sD}{\mathscr{D}}
\newcommand{\A}{\mathbb{A}}
\newcommand{\C}{\mathbb{C}}
\newcommand{\F}{\mathbb{F}}
\newcommand{\gm}{\mathbb{G}}
\newcommand{\p}{\mathbb{P}}
\newcommand{\Q}{\mathbb{Q}}
\newcommand{\Z}{\mathbb{Z}}
\newcommand{\tF}{\widetilde{F}}
\newcommand{\tinf}{\widetilde{\infty}}
\newcommand{\eps}{\varepsilon}
\newcommand{\G}{\Gamma}
\newcommand{\To}{\longrightarrow}
\newcommand{\Fi}{F_\infty}
\newcommand{\Ci}{\C_\infty}
\newcommand{\La}{\Lambda}
\newcommand{\Leg}[2]{\genfrac{(}{)}{}{}{#1}{#2}}
\newcommand{\Mod}[1]{\ (\mathrm{mod}\ #1)}
\newcommand{\ls}[2]{#1\left(\!\left(#2\right)\!\right)}
\newcommand{\dvr}[2]{#1\left[\!\left[#2\right]\!\right]}
\begin{document}
\title{Drinfeld-Stuhler modules and the Hasse principle}

\author{Keisuke  Arai} 
\address{Department of Mathematics, 
	School of Science and Technology for Future Life,
	Tokyo Denki University,
	5 Senju Asahi-cho, Adachi-ku, Tokyo 120-8551, Japan}
\email{araik@mail.dendai.ac.jp}

\author{Satoshi Kondo} 
\address{Middle East Technical University,
Northern Cyprus Campus,
99738 Kalkanli,
Guzelyurt,
Mersin 10,
Turkey}

\address{Kavli Institute for the Physics and Mathematics of the Universe,
University of Tokyo,
Kashiwa-no-ha 5-1-5,
277-8583,
Japan}

\email{satoshi.kondo@gmail.com}

\author{Mihran Papikian}
\address{Department of Mathematics, Pennsylvania State University, University Park, PA 16802, U.S.A.}
\email{papikian@psu.edu}

\subjclass[2010]{11G09, 11R52}

\keywords{Drinfeld-Stuhler modules; $\mathscr{D}$-elliptic sheaves; Hasse principle; canonical isogeny character}

\begin{abstract} We develop a theory of canonical isogeny characters of Drinfeld-Stuhler modules similar to the theory 
	of canonical isogeny characters of abelian surfaces with quaternionic multiplication. 
	We then apply this theory to give explicit criteria for the non-existence of rational points on Drinfeld-Stuhler 
	modular varieties over the finite extensions of $\mathbb{F}_q(T)$. 
	This allows us to produce explicit examples of Drinfeld-Stuhler curves violating the Hasse principle. 
\end{abstract}

\maketitle


\section{Introduction} 

Drinfeld-Shutler modules are function field analogues of abelian surfaces equipped with 
an action of an indefinite quaternion algebra over $\Q$. The idea of these objects
was proposed in the language of shtukas by Ulrich Stuhler under the name of \textit{$\sD$-elliptic sheaf}  
as a natural generalization of Drinfeld modules. 
The modular varieties of $\sD$-elliptic sheaves were studied by Laumon, Rapoport and Stuhler in \cite{LRS}, with the aim of proving the local Langlands correspondence for $\GL(n)$ in positive characteristic. 

Over the years, the third author of this paper have studied the arithmetic properties of $\sD$-elliptic sheaves and their modular varieties, trying to extend  
to this setting the rich theory of abelian surfaces with quaternionic multiplication and 
Shimura curves;  see, for example, \cite{PapGenus}, \cite{PapCrelle1}, \cite{PapHE}, \cite{PapikianMZ}, 
\cite{PapikianCrelle2}, \cite{PapJNTHayes}, 
\cite{PapRMS}. The current paper is a natural continuation of \cite{PapRMS}. 

Let $\F_q$ be a finite field with $q$ elements, where $q$ is a power of a prime number. Let $A=\F_q[T]$ 
be the ring of polynomials in indeterminate $T$ with coefficients in $\F_q$, and $F=\F_q(T)$ be the field of fractions of $A$. 
Let $d\geq 2$ be an integer and $D$ be a central division algebra over $F$ of dimension $d^2$, which is split at $1/T$. 
Fix a maximal $A$-order $O_D$ in $D$. The modular variety of Drinfeld-Stuhler $O_D$-modules $X^D$ 
is a projective geometrically connected  variety of dimension $d-1$ defined over $F$ (see Section \ref{sPreliminaries}). 

This paper has three main objectives:  
\begin{enumerate}
	\item Develop a theory of canonical isogeny characters of Drinfeld-Stuhler modules similar to the theory 
	of canonical isogeny characters of abelian surfaces with quaternionic multiplication given in \cite{Jordan}. 
	\item Apply (1) to give explicit criteria for the non-existence of rational points on $X^D$ over finite extensions of $F$. 
	\item Combine (2) with the results in \cite{PapikianCrelle2} to produce explicit examples of Drinfeld-Stuhler curves 
	violating the Hasse principle. 
\end{enumerate}

The major inspiration for this paper has been the work of Bruce Jordan in \cite{Jordan} 
(which itself was inspired by the work of Mazur \cite{MazurIsog}). One significant difference 
between our work and  \cite{Jordan} is that we are able to carry out (1) and (2) for arbitrary $d\geq 2$, not just quaternion algebras and 
curves. In principle, (3) can also be extended to the higher dimensional Drinfeld-Stuhler varieties once the results in \cite{PapikianCrelle2} 
are extended to the higher dimensional varieties. 
We also mention that the results of Jordan in \cite{Jordan}  have been generalized in the case of Shimura curves by 
the first author in \cite{Arai1}, \cite{Arai2}. 

Next we give a more detailed description of the results in this paper. 
Let $K$ be a field equipped with an $A$-algebra structure $\gamma:A\to K$. 
Note that $\F_q$ is a subfield of $K$.  Let $\tau$ be the $\F_q$-linear Frobenius endomorphism of the 
additive group-scheme $\gm_{a,K}=\Spec(K[x])$ over $K$; the morphism $\tau$ is given on the underlying ring by $x\mapsto x^q$. 
The ring of $\F_q$-linear endomorphisms $\End_{\F_q}(\gm_{a, K})$ is canonically isomorphic to the 
skew polynomial ring $K[\tau]$ with the commutation relation $\tau \alpha=\alpha^q\tau$, $\alpha\in K$. 
Given a unitary ring $R$, let $M_d(R)$ 
denote the ring of $d\times d$ matrices with entries from $R$. 
A Drinfeld-Stuhler $O_D$-module over $K$ is an embedding 
$$\phi: O_D\To \End_{\F_q}(\mathbb{G}_{a, K}^d)\cong M_d(K[\tau]), \qquad b\longmapsto \phi_b,
$$ 
such that: (i) For any $b\neq 0$ the kernel of $\phi_b$ as an endomorphism of $\mathbb{G}_{a, K}^d$ is 
a finite group scheme of order $\# O_D/O_D b$; (ii) For any $a\in A$ substituting $0$ for $\tau$ in $\phi_a$, one obtains 
the scalar matrix $\gamma(a)I_d$. 

Assume $\fp$ is a prime of $A$ such that the Hasse invariant $\inv_\fp(D)$ of $D$ at $\fp$ is $1/d$. 
Denote $\F_\fp=A/\fp$ and let $\F_\fp^{(d)}$ be the degree $d$ extension of the finite field $\F_\fp$. 
Let $\phi$ be a Drinfeld-Stuhler $O_D$-module over $K$. Assume $\fp\neq \ker(\gamma)$. In Section \ref{sCanChar}, 
we show that $\ker\phi_\fp\cong O_D/\fp$ has a canonical subgroup $\cC_{\phi, \fp}\cong \F_\fp^{(d)}$ which is $O_D$-invariant and rational 
over $K$. Following the terminology in \cite{Jordan}, we call 
the associated Galois representation 
$$
\varrho_{\phi, \fp}: \Gal(\overline{K}/K)\to \Aut_{O_D}(\cC_{\phi, \fp}) \approx (\F_\fp^{(d)})^\times
$$
a \textit{canonical isogeny character} of $\phi$ at $\fp$. We then study the properties of $\varrho_{\phi, \fp}$. 
To prove some of these properties, one needs to know that Drinfeld-Stuhler modules have potentially good reduction over local fields. 
This foundational result is implicitly proved in \cite{LRS} in the language of $\sD$-elliptic sheaves, but  
in Section \ref{sPGR} we give a different proof, along with a more precise information about the extension 
over which a Drinfeld-Stuhler module acquires good reduction.  

Now let $K$ be a degree $d$ extension of $F$ which splits $D$. Assume there is a Drinfeld-Stuhler $O_D$-module 
$\phi$ defined over $K$. 
The canonical isogeny character of $\phi$ forces the coefficients of the characteristic polynomials of the Frobenius automorphisms  
acting on a Tate module of $\phi$ to satisfy certain congruences modulo $\fp$ (this is proved in Section \ref{sDSMoverFF}), and 
it also puts some restrictions on the ray class groups of $K$. 
We use this information in Section \ref{sGlobPoints} to show that if 
certain explicit conditions are satisfied, then the Drinfeld-Stuhler variety $X^D$ has no $K$-rational points. 
These conditions are listed in Theorems \ref{thmGlobalPoints}, \ref{thmMain1}, \ref{thmMain2}, which are the main 
results of this paper. 
A technical issue arises here from the fact that 
$X^D$ is only a coarse moduli scheme, so the $K$-rational points on $X^D$ may not be represented by 
Drinfeld-Stuhler $O_D$-modules defined over $K$. Fortunately, a result from \cite{PapRMS} partly resolves this:  
If $X^D(K)\neq \emptyset$, then there is a  Drinfeld-Stuhler $O_D$-module defined over $K$ if and only if $D\otimes_F K=M_d(K)$.

\begin{rem}
	Let $K$ be a degree $d$ extension of $F$ which splits $D$. Assume there is a unique place $\tinf$ 
	of $K$ over $\infty=1/T$. Let $H_K$ be the maximal unramified abelian extension of $K$ in which 
	$\tinf$ splits completely (i.e., $H_K$ is the Hilbert class field of $K$). The Galois group $\Gal(H_K/K)$ is isomorphic to 
	the class group of the integral closure $B$ of $A$ in $K$. By developing a theory of ``complex multiplication'' for Drinfeld-Stuhler modules, 
	the third author has shown in \cite[Thm. 4.10]{PapRMS} that $X^D(H_K)\neq \emptyset$. In particular, if $B$ is a principal ideal domain, 
	then $X^D(K)\neq \emptyset$. 
\end{rem}
	
In Section \ref{sHP}, we assume that $d=2$, so  $X^D$ is a curve. The paper \cite{PapikianCrelle2} contains comprehensive results about 
the existence of rational points on $X^D$ over finite extensions of completions of $F$. 
We combine these results with the results of Section \ref{sGlobPoints} to construct explicit examples of pairs $(X^D, K)$, such that 
\begin{enumerate}
\item[(i)] $[K:F]=2$, 
\item[(ii)] $X^D(K)=\emptyset$, 
\item[(iii)] $X^D(K_v)\neq \emptyset$ for all places $v$ of $K$, 
\end{enumerate}
where $K_v$ denotes the completion of $K$ at $v$. In other words, $X^D$ violates the Hasse principle over $K$. 
Part of the calculations required for checking that the conditions of \cite{PapikianCrelle2}, as well as the conditions of 
Theorem \ref{thmGlobalPoints} or Theorem \ref{thmMain2}, are satisfied for a given pair $(X^D, K)$ were 
performed on a computer using the program \texttt{Magma}. Two such examples 
are the following:
\begin{example} Let $q=3$ and $D$ be the quaternion algebra over $F$ ramified at two primes $\{\fp, \fq\}$ of $A$, 
	and unramified at all other places of $F$. Let $K=F(\sqrt{\fd})$. The Hasse principle is violated for the following choices
	\begin{itemize}
		\item[(a)] $\fp=T^2+T+2, \quad \fq=T^2+1, \quad \fd=T\fp\fq$. 
		\item[(b)] $\fp=T^6+2T^4+T^2+2T+2, \quad \fq=T^2+T+2, \quad \fd=T^{13}+2T+1$. 
	\end{itemize}	
\end{example}

\begin{rem} We should mention the following relevant result: 
	
	Assume $D$ is a quaternion division algebra whose discriminant has degree $\geq 20$. 
	Then there are infinitely many quadratic extensions $K/F$ such that $X^D$ violates the Hasse principle over $K$. 
		
	This is proved in \cite{PapikianCrelle2} by adapting a method of Clark \cite{Clark} to the function field setting, 
	but this method does not give an effective procedure for finding the field extensions $K/F$ over which $X^D$ violates the Hasse 
	principle. 
\end{rem}


\section{Preliminaries}\label{sPreliminaries} We start by fixing the notation that will be used throughout the paper. 
As in the introduction, $\F_q$ will denote a finite field with $q$ elements, $A=\F_q[T]$ the polynomial ring, and $F=\F_q(T)$ the field of fractions of $A$. 
For a nonzero ideal $\fn\lhd A$, by abuse of notation, we denote by the same symbol the 
unique monic polynomial in $A$ generating $\fn$. (It will always be specified or clear from the context whether $\fn$ 
denotes the ideal or its monic generator.) We define 
\begin{equation}\label{eqNormN}
|\fn|=\# (A/\fn). 
\end{equation}

We will call  a nonzero prime ideal of $A$ simply a \textit{prime} of $A$. 
Given a prime $\fp$ of $A$, we denote by $A_\fp$ (resp. $F_\fp$) the completion of $A$ at $\fp$ (resp. the field of fractions of $A_\fp$).  
We denote by $\F_\fp=A/\fp$ the residue field of $A_\fp$, and by $\F_\fp^{(m)}$ the extension of $\F_\fp$ of degree $m\geq 1$.   

The degree $\deg(a)$ of $0\neq a\in A$ is its degree as a polynomial in $T$. The degree function extends to a valuation of $F$. 
The corresponding place of $F$ 
is called the \textit{place at infinity}, and denoted by $\infty$. The normalized absolute value on $F$ at $\infty$ is given by 
$$
|a|=q^{\deg(a)}\quad \text{for }0\neq a\in A. 
$$ 
Note that the normalized absolute value at $\infty$ is closely related with \eqref{eqNormN} via 
$|a|=|(a)|$, where $(a)$ denotes the ideal of $A$ generated by $a$. Since $1/T$ is a uniformizer at $\infty$,  
the completion $\Fi$ of $F$ at $\infty$ is isomorphic to $\F_q(\!(1/T)\!)$. We identify the places of $F$ not equivalent to $\infty$ with 
the primes of $A$. 

Let $D$ be a central simple algebra over $F$ of dimension $d^2$ such that $D\otimes\Fi\cong M_d(\Fi)$. 
Let $\Ram(D)$ be the set of primes of $A$ which ramify in $D$, i.e., $\fp\in \Ram(D)$ 
if and only if $D_\fp:=D\otimes_F F_\fp$ is not isomorphic to $M_d(F_\fp)$. 
Fix a maximal $A$-order $O_D$ in $D$; see \cite{Reiner} for the definitions. Note that $A$ is the center of $O_D$. 
Because $A$ is a principal ideal domain and $D$ is split at $\infty$, any two maximal $A$-orders are conjugate in $D$; see \cite[$\S$34]{Reiner}. 

Given a field $K$ we denote by $\overline{K}$ (resp. $K^\sep$) its algebraic (resp. separable) closure, and put 
$\rG_K=\Gal(K^\sep/K)$. 

Let $K$ be a field equipped with an $A$-algebra structure $\gamma:A\to K$. 
The $A$-characteristic of $K$ is $\chr_A(K):=\ker(\gamma)\lhd A$. We 
will always implicitly consider $F$, and its extensions, as $A$-fields via the natural embedding of $A$ into its field of fractions. 

Let $K[\tau]$ be the skew polynomial ring  with the commutation relation $\tau \alpha=\alpha^q\tau$, $\alpha\in K$. 
One can write the elements of $M_d(K[\tau])$ as finite sums $\sum_{i\geq 0} B_i\tau^i$, where $B_i\in M_d(K)$. 
Using this, we define a homomorphism 
\begin{equation}\label{eq-partial}
\partial: M_d(K[\tau])\To M_d(K), \quad \sum_{i\geq 0} B_i\tau^i\longmapsto B_0.
\end{equation}

A \textit{Drinfeld-Stuhler $O_D$-module} defined over $K$ is an embedding 
	\begin{align*}
	\phi: O_D &\To \End_{\F_q}(\gm_{a, K}^d)\cong M_d(K[\tau]) \\ 
	b &\longmapsto \phi_b
	\end{align*}
	satisfying the following conditions: 
	\begin{itemize}
		\item[(i)] For any $b\in O_D\cap D^\times$, the kernel $\phi[b]$ of the endomorphism $\phi_b$ of $\gm_{a, K}^d$ 
		is a finite group scheme over $K$ of order $\# (O_D/O_D\cdot b)$. 
		\item[(ii)] The composition 
		$$
		A\To O_D\overset{\phi}{\To} M_d(K[\tau]) \overset{\partial}{\To} M_d(K)
		$$
		maps $a\in A$ to the scalar matrix $\gamma(a)I_d$, where $I_d$ denotes the $d\times d$ identity matirx.   
	\end{itemize}

	A \textit{morphism} $u:\phi\to \psi$ between two Drinfeld-Stuhler $O_D$-modules over $K$ is 
	$u\in M_d(K[\tau])$ such that $u\phi_b=\psi_b u$ for all $b\in O_D$. 
	A morphism $u$ is an \textit{isomorphism} if $u$ is invertible in the ring $M_d(K[\tau])$. 
	The set of morphisms $\phi\to \psi$ over $K$ is an $A$-module $\Hom_K(\phi, \psi)$, where 
	$A$ acts by $a\circ u:=u\phi_a$. It can be shown that the kernel of any nonzero morphism $u$ 
	is a finite group scheme over $K$ (i.e., a nonzero morphism is an isogeny), and $\Hom_K(\phi, \psi)$ is a free $A$-module 
	of rank $\leq d^2$; cf. \cite{PapRMS}. 
	We denote $\End_K(\phi)=\Hom_K(\phi, \phi)$ and $\Aut_K(\phi)=\End_K(\phi)^\times$.

Let $\fp$ be a prime of $A$. The $\fp$-adic Tate module of $\phi$ is 
$$
T_\fp(\phi)=\underset{\substack{\longleftarrow \\ n}}{\lim}\ \phi[\fp^n](K^\sep). 
$$
$T_\fp(\phi)$ is a free $A_\fp$-module of rank $\leq d^2$; cf. \cite{Anderson}. 
Moreover, if $\fp\neq \chr_A(K)$, from \cite[Lem. 2.10]{PapRMS}, one deduces an isomorphism 
\begin{equation}\label{eqTfpphi}
T_\fp(\phi)\cong O_D\otimes_A A_\fp
\end{equation} 
of left $O_D$-modules.

	\begin{example}\label{exampleMdA} In the special case when $D=M_d(F)$ and $O_D=M_d(A)$, 
		Drinfeld-Stuhler modules can be obtained from Drinfeld modules by the following construction.  	
		Let $$\Phi: A\To K[\tau], \quad a\longmapsto \Phi_a,$$ be a Drinfeld $A$-module over $L$ of rank $d$. Such a module is 
		uniquely determined by the image of $T$:
		$$
		\Phi_T=\gamma(T)+g_1\tau+\cdots+g_d\tau^d, \quad g_d\neq 0. 
		$$
		Define 
		\begin{align*}
		\phi: O_D &\To M_d(K[\tau]) \\ 
		(a_{ij}) &\longmapsto \left(\Phi_{a_{ij}}\right).
		\end{align*}
		It is easy to check that $\phi$ is a Drinfeld-Stuhler module. 
		In fact, every Drinfeld-Stuhler $M_d(A)$-module arises from some 
		Drinfeld module $\Phi$ via this construction. This is a consequence of the Morita equivalence for Drinfeld-Stuhler modules; see  \cite[$\S$2.4]{PapRMS} 
		for the details.  
	\end{example}

The category of Drinfeld-Stuhler modules over $K$ is equivalent to the category of $\sD$-elliptic sheaves over $K$ 
(modulo a certain action of $\Z$ on the latter category); cf. \cite[$\S$3]{PapRMS}. 
 In \cite{LRS}, $\sD$-elliptic sheaves are defined over any $\F_q$-scheme $S$. The functor 
which associates to $S$ the set of isomorphism classes of $\sD$-elliptic sheaves over $S$ (modulo the action of $\Z$)
possesses a coarse moduli scheme $X^D$ 
over $C:=\p^1_{\F_q}-\Ram(D)-\{\infty\}$ of relative dimension $(d-1)$;   
this follows from \cite[Thm. 4.1]{LRS}, combined with the Keel-Mori theorem. 
Up to isomorphism, $X^D$ does not depend on the choice of a maximal order $O_D$ in $D$. 
Moreover, $X^D$ is geometrically connected since the class number of $A$ is $1$;  
this can be deduced from the rigid-analytic uniformization of $X^D$ described in \cite{BS}.
If $D$ is a central division algebra, then $X^D$ is proper over $C$ by \cite[Thm. 6.1]{LRS}. 
We call $X^D$ the \textit{Drinfeld-Stuhler variety}. 

Assume $\chr_A(K)\not\in \Ram(D)$, so that $\gamma: A\to K$ corresponds to a morphism $\Spec(K)\to C$.  
Let $X^D_K:=X^D\times_C \Spec(K)$.  A Drinfeld-Stuhler module defined over $K$ 
corresponds to a $K$-rational point on $X^D_K$. On the other hand, because $X^D$ is only a coarse moduli scheme, 
it is not necessarily true that every 
$K$-rational point on $X^D_K$ corresponds to some Drinfeld-Stuhler module defined over $K$, 
although this is true if $K$ is algebraically closed (for more on this, see Section \ref{sGlobPoints}).We will denote the set of $K$-rational points on $X^D_K$ 
by $X^D(K)$.


\section{Potentially good reduction property}\label{sPGR}

Let $K$ be a local field of positive characteristic. Let $R$ be the ring of integers of $K$, $\pi$ a uniformizer of $K$, and 
$k=R/(\pi)$ the residue field. Assume $\gamma: A\to R$ is an injective homomorphism. Extending this homomorphism to $\gamma: A\to K$, 
we consider $K$ as an $A$-field  with $\chr_A(K)=0$. 
\begin{defn}
Let $\phi: O_D\to M_d(K[\tau])$ be a Drinfeld-Stuhler module over $K$. We say that $\phi$ has 
\textit{good reduction} if there is $u\in \GL_d(K[\tau])$ such that $\psi=u\phi u^{-1}$  has the following two properties: 
\begin{enumerate}
	\item $\psi: O_D\to M_d(R[\tau])$, i.e., the image of $\psi$ is in $M_d(R[\tau])$; 
	\item $\overline{\psi}: O_D\xrightarrow{\psi}M_d(R[\tau])\xrightarrow{\mod \pi} M_d(k[\tau])$ 
is a Drinfeld-Stuhler module over $k$. 
\end{enumerate}
We say that $\phi$ has \textit{potentially good reduction} 
if there is a finite extension $L/K$ such that $\phi$, considered as a Drinfeld-Stuhler module over $L$, 
has good reduction.
\end{defn}

The main result of this section is the following analogue of a well-known fact about abelian surfaces with 
quaternionic multiplication. 

\begin{thm}\label{thm:main} 
	If $D$ is a central division algebra, then a Drinfeld-Stuhler $O_D$-module $\phi$ over $K$ has potentially good reduction. 
\end{thm}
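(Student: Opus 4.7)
The plan is a function-field analogue of the classical proof that abelian surfaces with quaternionic multiplication by a division quaternion algebra have potentially good reduction: the division-algebra hypothesis forces the ``toric piece'' of any semistable degeneration to vanish.

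First I would establish a N\'eron--Ogg--Shafarevich criterion for Drinfeld--Stuhler modules: $\phi$ has good reduction over $K$ if and only if, for some (equivalently, every) prime $\fl$ of $A$ with $\fl\notin\Ram(D)$ and $\fl$ distinct from the residue characteristic of $R$, the $\fl$-adic Tate module $T_\fl(\phi)$ is unramified as a $\rG_K$-module. This reduces the theorem to showing that the inertia subgroup $I_K\subset \rG_K$ acts on $T_\fl(\phi)$ through a finite quotient after some finite base extension. Such a criterion is the function-field analogue of Serre--Tate, and can be extracted either from the existence of smooth integral models of Drinfeld--Stuhler modules with good reduction, or by transferring through the equivalence with $\sD$-elliptic sheaves in \cite{LRS}.

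Next I would invoke the non-archimedean (Mumford--Drinfeld) uniformization of $\sD$-elliptic sheaves underlying the description of $X^D$ in \cite{BS}. After replacing $K$ by a suitable finite extension, a Drinfeld--Stuhler module $\phi$ without good reduction should admit a uniformization by a pair $(\psi, \Lambda)$, where $\psi$ is a Drinfeld--Stuhler object of good reduction carrying an $O_D$-action and $\Lambda$ is a finitely generated $O_D$-stable lattice, fitting into a short exact sequence of $\rG_K$-modules
\begin{equation*}
0\To T_\fl(\psi)\To T_\fl(\phi)\To \Lambda\otimes_A A_\fl\To 0
\end{equation*}
on which inertia acts finitely through the outer terms. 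By \eqref{eqTfpphi} the middle term has $A_\fl$-rank $d^2$, and potentially good reduction reduces to showing $\Lambda=0$.

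The rigidity afforded by $D$ being a division algebra now closes the argument: since $D$ is a central division algebra of dimension $d^2$ over $F$, every finitely generated $D$-module is free, so every finitely generated $O_D$-module has $A$-rank a nonnegative multiple of $d^2$. Hence $\rank_{A_\fl}T_\fl(\psi)$ and $\rank_{A_\fl}(\Lambda\otimes A_\fl)$ are both multiples of $d^2$; the identity $d^2=\rank_{A_\fl}T_\fl(\psi)+\rank_{A_\fl}(\Lambda\otimes A_\fl)$ forces either $\psi=0$ or $\Lambda=0$. The first is impossible, because $\psi$ inherits the $O_D$-structure from $\phi$ and so is itself a nonzero Drinfeld--Stuhler $O_D$-module; therefore $\Lambda=0$ and $\phi$ has good reduction over the finite extension.

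The main obstacle will be to set up the uniformization step rigorously at the level of individual Drinfeld--Stuhler modules, rather than just of the variety $X^D$, and to verify that the ``good-reduction piece'' $\psi$ really inherits the full $O_D$-structure (rather than an $O_{D'}$-structure for some subalgebra $D'\subsetneq D$) so that the rank constraint applies. A cleaner but less explicit alternative is to apply the valuative criterion of properness to $X^D\to C$ from \cite[Thm.~6.1]{LRS} and trivialize the resulting twist by a finite extension using the finiteness of $\Aut_{K^{\sep}}(\phi)\cong \F_{q^s}^\times$; this yields potentially good reduction, though without the explicit control on the extension of good reduction that the uniformization approach is designed to provide.
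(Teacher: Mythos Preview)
Your overall strategy---use the division-algebra hypothesis to force the ``degenerate piece'' to vanish---matches the paper's, and your alternative via the properness of $X^D\to C$ from \cite[Thm.~6.1]{LRS} is exactly the argument the paper says is implicit in \cite{LRS}. So the fallback is sound.

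However, your main route has a genuine gap at the uniformization step, and it is precisely the gap the paper's proof is designed to bridge. You assume that after a finite extension one obtains a short exact sequence $0\to T_\fl(\psi)\to T_\fl(\phi)\to \Lambda\otimes A_\fl\to 0$ with $\psi$ of good reduction and $\Lambda$ a lattice. For Drinfeld modules this comes from the potentially stable reduction theorem; but the paper emphasizes that for general Anderson $A$-modules this theorem is \emph{false}: Gardeyn \cite{GardeynJNT02} gives an explicit $T$-module whose Tate modules do not even potentially contain a nonzero unramified submodule. So one cannot simply invoke a semistable filtration for $\phi$, and the ``main obstacle'' you flag is not a technicality but the heart of the matter.

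The paper circumvents this as follows. Choose $n$ so that $F':=\F_{q^n}F$ splits $D$ (Tsen). Formally substituting $\sigma=\tau^n$ for $\tau$ turns $\phi$ into a Drinfeld--Stuhler $O_{D'}$-module $\widetilde{\phi}'$ with $D'\cong M_d(F')$; Morita equivalence then identifies $\widetilde{\phi}'$ with a rank-$d$ Drinfeld $A'$-module $\Phi$. For Drinfeld modules one \emph{does} have potentially stable reduction (Takahashi), so after a tamely ramified extension $\Phi$ has stable reduction; pulling this back gives an integral model of $\phi$ whose Tate module has a nonzero unramified submodule. Only now does the division-algebra argument enter, and it is phrased on the motive: the unramified part of $M(\phi)\otimes_A F$ is a nonzero $D^{\opp}$-subspace of a one-dimensional $D^{\opp}$-vector space, hence all of it, and Gardeyn's good-reduction criterion finishes. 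Your rank argument on $\Lambda$ is the same idea, but it only becomes applicable once one has produced the nonzero unramified piece---which is what the Tsen/Morita/Drinfeld-module detour accomplishes.
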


In principle, this is implicitly proven in \cite[$\S$6]{LRS}  in terms of $\sD$-elliptic sheaves using a result of Drinfeld. 
We will give a different proof in terms of Drinfeld-Stuhler modules, which also provides more information 
about the extension $L/K$ where $\phi$ acquires good reduction. 

To motivate our approach, we recall the proof of potentially good reduction property of abelian surfaces equipped 
with an action of a maximal order in an indefinite division quaternion algebra $D$. 
Let $X$ be such a surface defined over a local field $K$. 
The potentially stable reduction theorem for abelian varieties 
implies that there is a finite extension $L/K$ such that $X_L$ has stable reduction, i.e., the closed fibre 
of the N\'eron model of $X_L$ over the ring of integers of $L$ is an extension of an abelian variety by an algebraic torus $\cT$. 
Moreover, $X_L$ has good reduction if and only if $\cT$ is trivial. To show that $\cT$ is trivial, one considers the 
$\Q$-vector space $V(\cT)=\Hom(\cT\otimes \bar{k}, \gm_{m, \bar{k}})\otimes \Q$. The universal property of N\'eron 
models implies that $D$ acts on $V(\cT)$. Since $D$ is a division quaternion algebra, it cannot act on a nonzero $\Q$-vector space of 
dimension $<4$. Since $\dim V(\cT)\leq 2$, one deduces that $\cT$ is trivial. 

\begin{proof}[Proof of Theorem \ref{thm:main}] 
We will imitate the previous proof in the case of Drinfeld-Stuhler modules. But 
it is not possible to do this directly due to several problems. The most serious of these problems 
is the fact that the potentially stable reduction theorem is simply false for Anderson $A$-modules; cf. \cite{GardeynJNT02}. 
(Drinfeld-Stuhler modules are a special case of Anderson $A$-modules, similar to abelian surfaces with quaternionic multiplication 
being a special case of abelian varieties.) More precisely, in \cite[p. 470]{GardeynJNT02}, Gardeyn gave an explicit 
example of an Anderson $T$-module over $\ls{\F_q}{T}$ whose Tate modules  
do not potentially contain nonzero unramified submodules. 
Another problem is that an analogue of the theory of N\'eron models for Anderson $A$-modules  
is not as robust as for abelian varieties; cf. \cite{GardeynJNT03}.  

First, we adapt an idea used by Taelman in \cite{TaelmanPhD}. 
This will allow us to apply the potentially stable reduction theorem for Drinfeld modules to our problem.  
	
	Recall the following fact. Let $F'/F$ be a finite extension. 
	Then $D\otimes_F F'$ is a central simple algebra over $F'$ such that for a place $w$ of $F'$ 
	over a place $v$ of $F$ we have (cf. \cite[Lem. A.3.2]{LaumonCDV})
	\begin{equation}\label{eqChnageofinv}
	\inv_w(D\otimes_F F')=[F'_w:F_v]\cdot \inv_v(D) \in \Q/\Z. 
	\end{equation}
	Now suppose $F'=\F_{q^n}F$ is obtained by extending the constants. 
	In this case, $F'_w/F_v$ is unramified of degree $n/\gcd(n, \deg(v))$. 
	Hence, using the above formula for the invariants of $D\otimes_F F'$, we see that there is $n$, e.g., 
	$n=d\prod_{v\in \Ram(D)} \deg(v)$, such that the invariants of $D\otimes_F F'$ at all places of $F'$ are $0$, 
	which is equivalent to $D\otimes_F F' \cong M_d(F')$. This fact is known as \textit{Tsen's theorem}.  
	
	Now let $\phi$ be a Drinfeld-Stuhler $O_D$-module over $K$,  and let $n$ be such that $F'=\F_{q^n}F$ splits $D$. 
	Let $A'=\F_{q^n}[T]$ be the integral closure of $A$ 
	in $F'$. We can assume, after possibly extending $K$, that $\F_{q^n}\subset K$. 
	Denote $\sigma=\tau^n$, and consider the composition 
	$$
	\phi':O_D\overset{\phi}{\To} M_d(K[\tau])\xrightarrow{\tau\mapsto \sigma} M_d(K[\sigma]),   
	$$
	where $K[\sigma]$ is the twisted polynomial ring with commutation rule $\sigma b=b^{q^n}\sigma$, $b\in K$. 
	(The second map is a formal substitution $\tau\mapsto \sigma$; it is not a homomorphism.) 
	Note that $\phi'$ 
	is not a Drinfeld-Stuhler module according to our definition, but the definition can be easily generalized so that $\phi'$ is 
	a Drinfeld-Stuhler module of ``rank $n$''. Denote $O_{D'}:=O_D\otimes_{\F_q}\F_{q^n}$, and extend $\phi'$ to an embedding 
	$$
	\widetilde{\phi}':O_{D'} \To M_d(K[\sigma])
	$$
	by mapping $1\otimes\alpha\mapsto \diag(\alpha, \dots, \alpha)$. It is easy to check that $O_{D'}$ 
	is a maximal $A'$-order in $D':=D\otimes_F F'\cong M_d(F')$, for example, by calculating its discriminant.  
	Moreover,  $\widetilde{\phi}'$ 
	is a Drinfeld-Stuhler $O_{D'}$-module in the sense of Section \ref{sPreliminaries}. 
	
	It follows from \cite[(35.14)]{Reiner} that the number of conjugacy classes of maximal $A'$-orders 
	in $D'$ is not larger than the class number of $A'$. Since the class number of $A'$ is $1$, $O_{D'}$ is conjugate to $M_d(A')$, and  
	we can assume $O_{D'}=M_d(A')$. Now the Morita equivalence for Drinfeld-Stuhler modules \cite[Thm. 2.20]{PapRMS} 
	implies that $\widetilde{\phi}'$ arises from a unique Drinfeld $A'$-module $\Phi$ of rank $d$ by the construction 
	of Example \ref{exampleMdA}, i.e., $\widetilde{\phi}'((a_{ij}))=(\Phi_{a_{ij}})$.  
	It is easy to see that any Drinfeld $A'$-module over $K$ 
	acquires stable reduction over a tamely ramified extension $L/K$; cf. \cite{Takahashi}. (Recall that a Drinfeld module $\Phi: A'\to K[\sigma]$ 
	has stable reduction if $\Phi$ is isomorphic over $K$ to a Drinfeld module over $R$ whose reduction modulo $\pi$ is also a Drinfeld module 
	of possibly smaller rank.) 
	After possibly passing to such extension, we assume that $\Phi$ has stable reduction over $K$. Suppose $u\Phi u^{-1}$, $u\in K$, 
	has integral coefficients and stable reduction. Denote $U=\diag(u, \dots, u)$.  Then 
	$$ \widetilde{\psi}':=U\cdot \widetilde{\phi}'\cdot U^{-1}$$ 
	embeds $O_{D'}$ into $M_d(R[\sigma])$. Moreover, for any prime $\fp'$ of $A'$ not equal to $\ker(A'\to k)$  and $m\geq 1$, 
	the kernel of $\widetilde{\psi}'_{(\fp')^m} \Mod{\pi}$ is finite and nonzero. 
	Substituting $\sigma\mapsto \tau$, we see that 
	$\psi:=U \phi U^{-1}$ also has these properties, i.e., $\psi: O_D\to M_d(R[\tau])$ is a Drinfeld-Stuhler module, and the 
	kernel of $\overline{\psi}_{\fp^m}$ on $\gm_{a, k}^d$ is finite and nonzero for any $\fp\neq \ker(A\to k)$ and $m\geq 1$. 
	Therefore, the Tate module $T_\fp(\psi)$ contains a nonzero unramified 
	submodule. Let $M(\psi)$ be the $O_D$-motive associated to $\psi$; see Section 3 of \cite{PapRMS}. 
	The $F$-vector space $M(\psi)\otimes_A F$ is a $D^\opp$-vector space of dimension $1$, where $D^\opp$ denotes the opposite algebra of $D$. 
	The fact that $T_\fp(\psi)$ contains a nonzero unramified submodule, implies that $M(\psi)\otimes F$ contains a nonzero 
	unramified vector subspace $W$. On the other hand, the action of $\rG_K=\Gal(K^\sep/K)$ commutes with the action of $D^\opp$ on $M(\psi)\otimes F$, 
	so $W$ is a $D^\opp$-vector space. Since $D^\opp$ is a division algebra, the dimension of $W$ over $F$ 
	must be at least $d^2$. Thus $M(\psi)\otimes_A F$ is unramified. This puts us in a position where we can apply Gardayn's criterion 
	for good reduction of Anderson's motives \cite{GardeynJNT02} to conclude that $\phi$ has good reduction. 
\end{proof}

\begin{prop}\label{lem4.5} Assume $D$ is a central division algebra and $\phi$ is a 
	Drinfeld-Stuhler $O_D$-module over $K$. 
	There is a totally tamely ramified extension $L/K$ of degree dividing $q^d-1$ over which $\phi$ has good reduction.  
\end{prop}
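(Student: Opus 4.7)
The plan is to refine the proof of Theorem~\ref{thm:main} with explicit control on the ramification of the extension over which $\phi$ acquires good reduction.

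Following the strategy of Theorem~\ref{thm:main}, I would first make the unramified constant field extension $K_1 = K\F_{q^n}$ splitting $D$, then reduce via Morita equivalence to a Drinfeld $A'$-module $\Phi$ of rank $d$ over $K_1$ (with $q^n$-Frobenius $\sigma$). For Drinfeld modules of rank $d$, the stable reduction theorem of Takahashi provides an explicit conjugating element: writing $\Phi_T = \gamma(T) + g_1\sigma + \cdots + g_d\sigma^d$, and choosing $u \in \overline{K}$ with $u^{q^d-1}$ equal to a unit multiple of the leading coefficient $g_d$, the conjugate $u\Phi u^{-1}$ has integral coefficients and a unit leading coefficient. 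This produces a totally tamely ramified extension of degree dividing $q^d-1$ over which $\Phi$, and hence $\phi$, has good reduction once the constant field extension is accounted for.

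Next I would descend the good reduction from a large extension down to an extension of $K$ itself. The key input is that good reduction of Drinfeld-Stuhler modules is preserved under unramified base change; this follows from the Tate-module form of the N\'eron--Ogg--Shafarevich criterion, which is implicit in the proof of Theorem~\ref{thm:main} via the Anderson motive $M(\phi)$ and Gardeyn's criterion. Concretely, setting $L := K(u)$, the compositum $K_1 L$ is where $\Phi$ has good reduction; since $K_1 L / L$ is unramified, $\phi$ also has good reduction over $L$, and $L/K$ is totally tamely ramified since its ramification index equals that of $K_1 L / K_1$, which divides $q^d - 1$, an integer prime to $p = \chr \F_q$.

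The main obstacle is ensuring that the descended extension $L/K$ has degree dividing $q^d - 1$, rather than the a priori coarser bound $q^{nd} - 1$ that one would get by naively applying Takahashi's bound in the $q^n$-Frobenius setting over $K_1$. This requires showing that $u$ can be chosen with $u^{q^d-1} \in K^\times$ (not merely in $K_1^\times$). The crucial point is that $\Phi$ arises from the $K$-rational module $\phi$ through the Morita equivalence: the invariants of $\Phi_T$ that control the ramification are $\Gal(K_1/K)$-equivariant, so the relevant coefficient lies in a subgroup of $K_1^\times$ whose $(q^d-1)$-th power roots generate an extension already defined over $K$. Making this descent argument precise, by tracking the Galois action on the leading matrix coefficient of $\phi_T$ through the Morita identification, is the technical heart of the proof.
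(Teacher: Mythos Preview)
Your approach diverges from the paper's and has a genuine gap at precisely the point you flag as the ``technical heart''.

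First, a computational slip: since $\sigma = \tau^n$, conjugating $\Phi_T = \gamma(T) + g_1\sigma + \cdots + g_d\sigma^d$ by $u$ changes the leading coefficient to $g_d\, u^{1-q^{nd}}$, so one needs $u^{q^{nd}-1}$ (not $u^{q^d-1}$) to equal a unit times $g_d$. You acknowledge this discrepancy later as the ``coarser bound $q^{nd}-1$'', but your opening formulation is incorrect as written.

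The real problem is the descent from $q^{nd}-1$ to $q^d-1$. Your proposed mechanism---that $\Gal(K_1/K)$-equivariance of the Morita identification forces $g_d$ into a subgroup of $K_1^\times$ whose $(q^d-1)$-th roots already lie over $K$---is not justified, and it is unclear how to make it work. The isomorphism $O_D\otimes_{\F_q}\F_{q^n}\cong M_d(A')$ depends on non-canonical choices with no evident Galois equivariance, so there is no apparent constraint placing $g_d$ (or its valuation) in a class that would force $K(u)/K$ to have degree dividing $q^d-1$. Your sketch therefore yields only a totally tamely ramified extension of degree dividing $q^{nd}-1$.

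The paper sidesteps this difficulty entirely. It first extracts from the proof of Theorem~\ref{thm:main} that $\phi$ has good reduction over \emph{some} tamely ramified extension, then uses a Frobenius-lift argument (taking the fixed field of $\Gamma_\theta N$, where $\theta$ is a Frobenius element and $N$ is the kernel of inertia on $T_\fp(\phi)$) to produce a \emph{totally} tamely ramified such $L/K$. The degree bound then comes from a completely different route: for $L$ minimal, each nontrivial $g\in\Gal(L/K)$ acts nontrivially on $T_\fp(\phi)\cong T_\fp(\bar\phi)$, so by the Tate isogeny theorem for $T$-modules it arises from an element of $\End_{k,T}(\bar\phi)^\times$ commuting with $O_D$, i.e., from $\Aut_k(\bar\phi)$ as a Drinfeld--Stuhler module. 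Since $\Aut_k(\bar\phi)\cong\F_{q^m}^\times$ for some $m\mid d$, one obtains $[L:K]\mid q^d-1$. This argument is indifferent to the size of the auxiliary constant extension $\F_{q^n}$, which is exactly the obstacle your explicit-coefficient approach cannot overcome.
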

\begin{proof}
	First we prove that there is a tamely ramified extension over which $\phi$ has good reduction. 
	After possibly extending $\F_q\subset K$ to a large enough $\F_{q^n}$, we are in the set up of the 
	the proof of Theorem \ref{thm:main}.  It follows from the proof that the Drinfeld module $\Phi: A'\to K[\sigma]$ 
	has potentially good reduction. 
	It is not hard to show that a Drinfeld module with potentially good reduction 
	acquires good reduction over a tamely ramified extension of $K$; see \cite[Prop. 2]{Takahashi}. But then, again 
	from the proof of Theorem \ref{thm:main}, it follows that 
	$\phi$ also acquires good reduction over a tamely ramified extension of $K$. 
	
	Now we prove that there is a totally tamely ramified extension over which $\phi$ has good reduction. 
	The fact that $\phi$ has potentially good reduction implies that the image 
	of the inertia group $I_K$ under the representation $\rho: \rG_K\to \Aut_{O_D}(T_\fp(\phi))$ is a finite group for any prime $\fp$ 
	not equal to the $A$-characteristic of $k$. 
	The kernel $N$ of $\rho$ restricted to $I_K$ is independent of $\fp\neq \chr_A(k)$. 
	Let $\theta\in \rG_K$ be a Frobenius element. Denote by $\G_\theta$ the closure 
	of the subgroup of $\rG_K$ generated by $\theta$. 
	Let $L$ be the extension of $K$ cut out by $\G_\theta N\subset \rG_K$. Then $L/K$ 
	is a totally tamely ramified extension. Since the action of the inertia group $I_L$ on $T_\fp(\phi)$ is trivial, 
	by Gardeyn's good reduction criterion of $T$-motives \cite{GardeynJNT02}, $\phi$ has good reduction over $L$. 
	
	Finally, we prove the claim about the degree of $L/K$. We have proved that $\phi$ has good reduction 
	over an abelian extension $L$ of $K$. We can assume that $\phi$ does not have good reduction over any proper 
	subfield of $L$. Fix  a prime $\fp\lhd A$ such that $\fp\neq \chr_A(k)$. Any $g\in \Gal(L/K)$, $g\neq 1$, acts non-trivially on $T_\fp(\phi)$, since 
	otherwise the fixed field $K'$ of the subgroup generated by $g$ is an extension 
	over which $T_\fp(\phi)$ is unramified, and thus  
	by Gardayn's criterion $\phi$ has good reduction over $K'$. 
	There is a natural isomorphism $T_\fp(\phi)\cong T_\fp(\bar{\phi})$ given by reduction 
	modulo the maximal ideal of the ring of integers of $L$.  Hence $g\in \Aut_{\rG_k}(T_\fp(\bar{\phi}))$. On the other hand, 
	by the analogue of Tate's isogeny conjecture for $T$-modules proved by Taguchi and Tamagawa \cite{Taguchi}, we have 
	an isomorphism 
	$$
	\iota: \End_{k, T}(\bar{\phi})\overset{\sim}{\To} \End_{\rG_k}(T_\fp(\bar{\phi})), 
	$$
	where on the left hand side we consider $\bar{\phi}$ as a $T$-module and  $$\End_{k, T}(\bar{\phi}):=\{u\in M_d(k[\tau])\mid  
	u\bar{\phi}_T=\bar{\phi}_T u\}.$$ 
	Thus, $g$ induces a non-trivial automorphism of $\bar{\phi}$ over $k$. This automorphism commutes with the action of $O_D$, 
	since the action of $O_D$ on $T_\fp(\phi)$ commutes with $\Gal(L/K)$ over $K$. Thus, $\Gal(L/K)\hookrightarrow 
	 \Aut_{k}(\bar{\phi})$, where now 
	$\bar{\phi}$ is considered as a Drinfeld-Stuhler module. 
	Finally, by \cite[Thm. 4.1]{PapRMS}, $\Aut_{k}(\bar{\phi})\cong \F_{q^m}^\times$   
	for some $m\mid d$. 
\end{proof}


\section{Canonical isogeny characters}\label{sCanChar}

Let $\fp\in \Ram(D)$. In this section we assume that $D_\fp:=D\otimes_F F_\fp$ is a division algebra with Hasse 
invariant $\inv_\fp(D)=1/d$. 

We start by examining the reduction of $O_D$ modulo $\fp$. 
Note that 
$$
O_D\otimes_A \F_\fp=(O_D\otimes_A A_\fp)\otimes_{A_\fp} \F_\fp. 
$$
Since $\cD_\fp:=O_D\otimes_A A_\fp$ is a maximal order in the central division algebra $D_\fp$ over $F_\fp$, it can be 
explicitly described as follows (cf. \cite[Appendix A]{LaumonCDV}):   
Let $\F_\fp^{(d)}$ be the degree $d$ extension of $\F_\fp$. Let $F_\fp^{(d)}=\F_\fp^{(d)} F_\fp$ be the unramified extension of $F_\fp$ of degree $d$  
and $A_\fp^{(d)}$ be the ring of integers of $F_\fp^{(d)}$. Let $\sigma\in \Gal(F_\fp^{(d)}/F_\fp)$ 
be the lifting of the Frobenius automorphism $\alpha\mapsto \alpha^{|\fp|}$ in $\Gal(\F_\fp^{(d)}/\F_\fp)$.  By \cite[$\S$14]{Reiner}, 
$$
D_\fp\cong F_\fp^{(d)}[\Pi]/(\Pi^d-\fp),
$$
where $F_\fp^{(d)}[\Pi]$ is the non-commutative polynomial ring in $\Pi$ over $F_\fp^{(d)}$ 
with commutation rule 
$$
\Pi \alpha=\sigma(\alpha)\Pi, \qquad \alpha\in F_\fp^{(d)}. 
$$
The maximal order of $D_\fp$ is 
$$
\cD_\fp=A_\fp^{(d)}[\Pi]/(\Pi^d-\fp)
$$ 
and $\Pi \cD_\fp=\cD_\fp\Pi$ 
is the maximal ideal of $\cD_\fp$. To make this description even more explicit, note that 
$A_\fp^{(d)}$ may be identified with the ring $\dvr{\F_\fp^{(d)}}{\fp}$ of formal series with coefficients in $\F_\fp^{(d)}$, where 
we consider $\fp$ as a uniformizer of $F_\fp$;  cf. \cite[$\S$II.4]{SerreLF}.  
From this description we obtain 
\begin{align}\label{eqcD_fp}
O_D/\fp O_D & \cong \cD_\fp/\fp\cD_\fp  \\ 
 \nonumber  & \cong  \F_\fp^{(d)}[\Pi]/\Pi^d  \\ 
 \nonumber &\cong   \F_\fp^{(d)}\oplus \F_\fp^{(d)}\Pi\oplus \cdots\oplus \F_\fp^{(d)}\Pi^{d-1},\quad \Pi^d=0, \quad \Pi\alpha=\alpha^{|\fp|}\Pi. 
\end{align}

Now let $K$ be an $A$-field $\gamma: A\to K$ such that $\chr_A(K)\neq \fp$. Let $\phi$ be a Drinfeld-Stuhler $O_D$-module over $K$. 
By \cite[Lem. 2.10]{PapRMS}, we have an isomorphism of left $O_D$-modules 
\begin{equation}\label{eqphi[p]}
\phi[\fp]\cong O_D/\fp O_D. 
\end{equation}

\begin{defn}
It is easy to see from \eqref{eqcD_fp} and \eqref{eqphi[p]} that $\phi[\fp]$ has exactly one 
$O_D$-submodule which is a $1$-dimensional $\F_\fp^{(d)}$-vector space, namely the $\F_\fp^{(d)}$-vector space spanned by $\Pi^{d-1}$. 
We shall denote this submodule by $\cC_{\phi, \fp}$ and call it 
the \textit{canonical subgroup} of $\phi[\fp]$. Note that the canonical subgroup can be equivalently described as the 
kernel of $\Pi$ acting on $\phi[\fp]$. Since the action of $\rG_K$ on $\phi[\fp]$ commutes with 
the action of $O_D$, the canonical subgroup $\cC_{\phi, \fp}$ is rational over $K$. 
\end{defn}

\begin{prop}\label{lem:Sec2lem2.5}
	Assume $K$ is a local field with uniformizer $\varpi$, and $\gamma(\fp)=\varpi^m$, $m\geq 1$. Further assume that 
	$K$ contains $\F_\fp^{(d)}$ and 	
	$\phi$ has good reduction over $K$. Then the extension $K(\cC_{\phi, \fp})/K$ obtained from the canonical subgroup of $\phi[\fp]$
	is a totally tamely ramified extension of degree $(|\fp|^d-1)/\gcd(|\fp|^d-1, m)$. 
\end{prop}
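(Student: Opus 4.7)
The plan is to identify $L := K(\cC_{\phi, \fp})/K$ as a tame Kummer-type extension, whose degree can be read off from a Newton polygon computation in the formal module of $\phi$.

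First I would extract a character controlling the extension. By \eqref{eqTfpphi}, the Tate module $T_\fp(\phi)$ is isomorphic to $\cD_\fp$ as a left $O_D$-module, and the $O_D$-action extends by $\fp$-adic continuity to a left $\cD_\fp$-action. The $\rG_K$-action commutes with this, so $\rG_K$ acts by right multiplication through some homomorphism $\rG_K \to \cD_\fp^\times$. Under the identification $\cC_{\phi, \fp} = \Pi^{d-1}\cD_\fp/\fp\cD_\fp \cong \cD_\fp/\Pi\cD_\fp = \F_\fp^{(d)}$, this descends to a character $\chi: \rG_K \to (\F_\fp^{(d)})^\times$ whose kernel cuts out $L$. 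In particular $[L:K]$ divides $|\fp|^d - 1$, which is coprime to the residue characteristic of $K$, so $L/K$ is automatically tame.

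Next I would use good reduction to establish total ramification. After replacing $\phi$ by an isomorphic integral model, the hypothesis $\fp \in \Ram(D)$ forces the finite flat group scheme $\phi[\fp]$ over $R$ to be entirely connected---a ``supersingular'' statement that follows from the fact that the $O_D$-action on $\bar\phi[\fp]$ factors through the non-semisimple local ring $O_D/\fp O_D \cong \F_\fp^{(d)}[\Pi]/\Pi^d$, and the $\cD_\fp$-signature of a Drinfeld-Stuhler module in $A$-characteristic $\fp \in \Ram(D)$ is incompatible with a non-trivial \'etale quotient (cf.\ \cite[$\S$5]{PapRMS}). Consequently every non-zero $\zeta \in \cC_{\phi, \fp}(\bar K)$ lies in the formal module $\hat\phi(\fm_{\bar K})$, has strictly positive valuation, and reduces to $0$; in particular $L/K$ is totally ramified.

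Finally I would compute the exact degree via a Morita-type decomposition. Since $\F_\fp^{(d)} \subset K$ gives $A_\fp^{(d)} \subset R$, and $\cD_\fp \otimes_{A_\fp} A_\fp^{(d)} \cong M_d(A_\fp^{(d)})$, after restricting scalars the formal $\cD_\fp$-module $\hat\phi$ becomes Morita-equivalent over $R$ to a direct sum of $d$ copies of a $1$-dimensional Lubin-Tate formal $A_\fp^{(d)}$-module $\cL$ of $A_\fp^{(d)}$-height one; under this identification $\cC_{\phi, \fp}$ corresponds to the $\fp$-torsion of a single copy of $\cL$. For such $\cL$ one has the Lubin-Tate series
\begin{equation*}
[\fp]_\cL(X) = \varpi^m X + \cdots + u X^{|\fp|^d}, \qquad u \in R^\times,
\end{equation*}
with linear coefficient $\gamma(\fp) = \varpi^m$. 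The Newton polygon of $[\fp]_\cL(X) = 0$ has a single slope $m/(|\fp|^d - 1)$, so every non-zero $\zeta$ satisfies $v_K(\zeta) = m/(|\fp|^d - 1)$ and, after rescaling by a unit, a Kummer equation $\zeta^{|\fp|^d - 1} = u'\varpi^m$ with $u' \in R^\times$. Because $\F_\fp^{(d)} \subset K$ provides all $(|\fp|^d-1)$-th roots of unity and $\cC_{\phi, \fp} = \F_\fp^{(d)} \cdot \zeta$, Kummer theory yields $L = K(\zeta)$ and $[L:K] = (|\fp|^d - 1)/\gcd(|\fp|^d - 1, m)$. The main obstacle is the Morita step: producing a genuine integral decomposition of $\hat\phi$ over $R$ that respects both the $\cD_\fp$-action and the good-reduction structure, which rests on the theory of formal $\cD_\fp$-modules \`a la Drinfeld.
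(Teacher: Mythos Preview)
Your overall strategy coincides with the paper's: pass to the connected formal $\fp$-divisible group $\widehat{\phi}$, use the $\F_\fp^{(d)}$-action coming from $\F_\fp^{(d)}\subset\cD_\fp$ to split $\widehat{\phi}$ into one-dimensional pieces, identify the piece carrying $\cC_{\phi,\fp}$ as a Lubin--Tate formal module with $[\fp](X)=\varpi^m X+\cdots+uX^{|\fp|^d}$, and read off the ramification. Your Newton-polygon/Kummer endgame is equivalent to the paper's appeal to Lubin--Tate local class field theory.

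However, the justification you give for the decomposition step contains a genuine error. The isomorphism $\cD_\fp\otimes_{A_\fp}A_\fp^{(d)}\cong M_d(A_\fp^{(d)})$ is \emph{false}. It is true that $F_\fp^{(d)}$ splits $D_\fp$, so $D_\fp\otimes_{F_\fp}F_\fp^{(d)}\cong M_d(F_\fp^{(d)})$, but the integral statement fails: writing $\cD_\fp=A_\fp^{(d)}[\Pi]/(\Pi^d-\fp)$ and using the idempotents of $A_\fp^{(d)}\otimes_{A_\fp}A_\fp^{(d)}\cong(A_\fp^{(d)})^d$, one checks that $\cD_\fp\otimes_{A_\fp}A_\fp^{(d)}$ sits inside $M_d(F_\fp^{(d)})$ as the standard Iwahori (chain) order, with $\Pi$ mapping to a matrix whose $d$-th power is $\fp\cdot I_d$ rather than a unit. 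Equivalently, the reduced discriminant of $\cD_\fp$ over $A_\fp$ is a nontrivial power of $\fp$, and this persists after the \'etale base change to $A_\fp^{(d)}$; a maximal order in a split algebra would have trivial discriminant. So Morita equivalence with $A_\fp^{(d)}$ is simply not available, and the ``main obstacle'' you flag is a real gap, not just a technicality.

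The paper obtains the decomposition by a different, more elementary device: it decomposes $\Lie(\widehat{\gm}_{a,K}^d)$ into eigenspaces for the $\F_\fp^{(d)}$-action (this works integrally because $\F_\fp^{(d)}\subset R$ splits $\F_\fp^{(d)}\otimes_{\F_\fp}R$ into a product of copies of $R$), then chooses coordinates on $\widehat{\gm}_{a,R}^d$ adapted to this splitting. Since $A_\fp$ centralizes $\F_\fp^{(d)}$ inside $\cD_\fp$, the action $\widehat{\phi}(A_\fp)$ preserves each factor, yielding one-dimensional formal $A_\fp$-modules $\widehat{\psi}_0,\dots,\widehat{\psi}_{d-1}$; and $\Pi$ visibly shifts $\widehat{\psi}_i[\fp]\to\widehat{\psi}_{i+1}[\fp]$, so $\cC_{\phi,\fp}=\widehat{\psi}_{d-1}[\fp]$. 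This replaces your Morita step entirely and is what you should substitute in.
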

\begin{proof}
	Let $R$ be the ring of integers of $K$. 
	Because $\phi$ has good reduction over $R$, each group-scheme $\phi[\fp^n]$, $n\geq 1$, over $K$
	extends to a finite flat group-scheme over $R$ of order $|\fp|^{d^2n}$. On the other hand, by \cite[Lem. 5.6]{PapRMS}, 
	the closed fibre of $\phi[\fp^n]$ over $R$ is connected. (Here we use the assumption that $D_\fp$ is a division algebra.)
	We can form the $\fp$-divisible group $\G(\phi)=\underset{\To}{\lim}\ \phi[\fp^n]$ in a usual manner; cf. \cite[p. 33]{LaumonCDV}. 
	Note that each $\phi[\fp^n]$ can be embedded as a subgroup-scheme into $\gm_{a, R}^d$, and $\cD_\fp$ acts on $\G(\phi)$ 
	since it acts on each $\phi[\fp^n]$ 
	via its quotients $\cD_\fp/\fp^n=O_D/\fp^n$ in a compatible manner. Let $\widehat{\G}(\phi)$ be the formal group associated 
	to $\G(\phi)$ by the Serre-Tate construction \cite[Prop. 1]{TatePDiv}. As a formal group, $\widehat{\G}(\phi)$ is just 
	the direct product of $d$ copies of the formal additive group $\widehat{\gm}_{a, R}$, but it comes equipped with an action of $\cD_\fp$. 
	From this action we get an embedding  
	$$
	\widehat{\phi}: \cD_\fp\to \End_{\F_q}(\widehat{\gm}_{a, R}^d)\cong M_d\left(\dvr{R}{\tau}\right),
	$$ 
	where $\dvr{R}{\tau}$ is the non-commutative ring of formal series in $\tau$ with coefficients in $R$ and the 
	commutation rule $\tau a=a^q\tau$, $a\in R$. 
	Note that the composition 
	$$
	A_\fp\to \cD_\fp\xrightarrow{\widehat{\phi}} M_d\left(\dvr{R}{\tau}\right) \xrightarrow{\partial} M_d(R),
	$$
	where $\partial$ is defined similarly to \eqref{eq-partial}, 
	maps $a\in A_\fp$ to $\diag(\gamma(a), \dots, \gamma(a))$ and $\phi[\fp^n]=\widehat{\phi}[\fp^n]$ for all $n\geq 1$. 
	
	As earlier, represent $\cD_\fp\cong \dvr{\F_\fp^{(d)}}{\fp}[\Pi]/(\Pi^d-\fp)$ and consider the action of $\F_\fp^{(d)}$ on the 
	tangent space $\Lie(\widehat{\gm}_{a, K}^d)\cong K^{\oplus d}$. Since $K$ contains a subfield isomorphic to $\F_\fp^{(d)}$, there 
	is at least one eigenspace $V\subset K^{\oplus d}$ on which $\F_\fp^{(d)}$ acts by scalar multiplication via an embedding $\F_\fp^{(d)}\hookrightarrow K$ 
	(compatible with the $\F_\fp$-structure of both fields), i.e., by a ``fundamental character''. If we fix one such character $\chi$, then 
	any other fundamental character is of the form $\chi_i:=\chi^{|\fp|^i}$, $0\leq i\leq d-1$. On the other hand, the action of $\F_\fp^{(d)}$ 
	on $\Lie(\widehat{\gm}_{a, K}^d)$ is compatible with the action of $\Pi$. Let $V_{\chi_i}\subset \Lie(\widehat{\gm}_{a, K}^d)$ be the eigenspace on which $\F_\fp^{(d)}$ acts via $\chi_i$.  Since $\Pi\chi_i=\chi_i^{|\fp|}\Pi$ and $\Pi^d=\fp$, we see that $\Lie(\widehat{\gm}_{a, K}^d)$ decomposes into a 
	direct sum $\bigoplus_{i=0}^{d-1}V_{\chi_i}$. By comparing the dimensions, one concludes that each $V_{\chi_i}$ is $1$-dimensional. 
	
	We assume without loss of generality that 
	$$\Lie(\widehat{\gm}_{a, K}^d)=\bigoplus_{i=0}^{d-1} \Lie(\widehat{\gm}_{a, K})=\bigoplus_{i=0}^{d-1} V_{\chi_i}, 
	$$ 
	i.e., the decomposition of the formal group into a direct product of $\widehat{\gm}_{a, K}$'s is compatible 
	with the decomposition of the tangent space into $\F_\fp^{(d)}$-eigenspaces. This decomposition of the formal group 
	is preserved by the action of $\widehat{\phi}(A_\fp)$, since $A_\fp$ commutes with $\F_\fp^{(d)}$ in $\cD_\fp$. 
	Hence from each component we get a formal $A_\fp$-module 
	$$
	\widehat{\psi}_i: A_\fp \to \End_{\F_q}(\widehat{\gm}_{a, R})=\dvr{R}{\tau}, 
	$$ 
	such that $(\widehat{\psi}_i)_\fp\equiv \gamma(\fp) \Mod{\tau}$, where  $(\widehat{\psi}_i)_\fp$ is the image of $\fp$ 
	under $\widehat{\psi}_i$. 
	Moreover, $\phi[\fp]=\widehat{\phi}[\fp]=\bigoplus_{i=0}^{d-1}\widehat{\psi}_i[\fp]$. From the construction, we have 
	$\Pi \widehat{\psi}_i[\fp]= \widehat{\psi}_{i+1}[\fp]$ for $i=0, \dots, d-2$ and $\Pi \widehat{\psi}_{d-1}[\fp]=0$. 
	Hence $\cC_{\phi, \fp}=\widehat{\psi}_{d-1}[\fp]$.  
	
	It remains to examine $\widehat{\psi}:=\widehat{\psi}_{d-1}$ more closely. Since $A_\fp$ commutes with $\F_\fp^{(d)}$, we in fact have 
	$$
	\widehat{\psi}: A_\fp \to \End_{\F_\fp^{(d)}}(\widehat{\gm}_{a, R})=R[\![\tau_\fp^{d}]\!], 
	$$
	where $\tau_\fp=\tau^{\deg(\fp)}$. 
	The formal group $\widehat{\phi}$ modulo ${\varpi}$  
	has height ${d^2}$, so $\widehat{\psi}$ modulo ${\varpi}$ has height $d$, i.e., after possibly replacing $\widehat{\psi}$ by 
	an isomorphic module, we have 
	$$
	\widehat{\psi}_\fp -\tau_\fp^{d} \in \varpi R[\![\tau_\fp^{d}]\!]. 
	$$
	By assumption $\gamma(\fp)=\varpi^m$, so the power series $\widehat{\psi}_\fp(x)\in R[\![x]\!]$ satisfies 
	\begin{itemize}
		\item $\widehat{\psi}_\fp(x)=\varpi^m x + \text{terms of degree $\geq 2$}$; 
		\item $\widehat{\psi}_\fp(x) \equiv x^{|\fp|^d}\Mod{\varpi}$. 
	\end{itemize}
	Thus, $\widehat{\psi}$ is a Lubin-Tate formal group. This implies, by local class field theory, that $K(\cC_{\phi, \fp})/K$ is totally ramified of degree $(|\fp|^d-1)/\gcd(|\fp|^d-1, m)$. 
\end{proof}

The action of $\rG_K$ on the subgroup $\cC_{\phi, \fp}$ yields a character 
$$
\varrho_{\phi,\fp}: \rG_K\to \Aut_{O_D}(\cC_{\phi, \fp})\approx (\F_\fp^{(d)})^\times. 
$$ 
This character depends on an identification $\cD_\fp/\Pi\cD_\fp \cong \F_\fp^{(d)}$; such identifications differ by automorphisms 
of $\F_\fp^{(d)}$ given by powers of the Frobenius $\alpha\mapsto \alpha^{|\fp|}$. 
\begin{defn}
The characters $\varrho_{\phi,\fp}^{|\fp|^i}$, $0\leq i\leq d-1$, will be called the \textit{canonical isogeny characters} associated to $\phi$ at $\fp$.  
\end{defn}

For the rest of this section we study the properties of canonical isogeny characters. 
By \eqref{eqcD_fp} and \eqref{eqphi[p]}, $\phi[\fp]$ is a $d$-dimensional vector space over $\F_\fp^{(d)}$, so from the action of $\rG_K$ on $\phi[\fp]$ 
one obtains a representation 
\begin{equation}\label{eqpi(g)def}
\pi_{\phi, \fp}: \rG_K\to \Aut_{O_D}(\phi[\fp])\subset \GL_d(\F_\fp^{(d)}). 
\end{equation}
\begin{lem}\label{lemCCdetpi}  
	$$\det(\pi_{\phi,\fp})=\varrho_{\phi,\fp}^{1+|\fp|+\cdots+|\fp|^{d-1}}=\Nr_{\F_\fp^{(d)}/\F_\fp}\left(\varrho_{\phi,\fp}\right).
	$$
\end{lem}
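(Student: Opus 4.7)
The plan is to exploit the $\Pi$-adic filtration of $\phi[\fp]$. Using the isomorphism \eqref{eqcD_fp} together with \eqref{eqphi[p]}, I would consider the descending filtration
\[
\phi[\fp] = V_0 \supset V_1 \supset \cdots \supset V_{d-1} \supset V_d = 0,
\qquad V_i := \Pi^i \phi[\fp].
\]
Each $V_i$ is an $O_D$-submodule (since $\Pi\cD_\fp$ is a two-sided ideal in $\cD_\fp/\fp\cD_\fp$), and since $\phi[\fp]$ is free of rank $1$ over $\cD_\fp/\fp\cD_\fp$, each successive quotient $W_i := V_i/V_{i+1}$ is a $1$-dimensional $\F_\fp^{(d)}$-vector space. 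In particular $W_{d-1} = \ker(\Pi) = \cC_{\phi,\fp}$, on which $\rG_K$ acts by $\varrho_{\phi,\fp}$.

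Because the $\rG_K$-action commutes with the $O_D$-action, the filtration is $\rG_K$-stable and on each $W_i$ the Galois group acts $\F_\fp^{(d)}$-linearly by some character $\chi_i \colon \rG_K \to (\F_\fp^{(d)})^\times$, with $\chi_{d-1} = \varrho_{\phi,\fp}$. The crucial step is to relate consecutive characters using the commutation relation $\Pi\alpha = \alpha^{|\fp|}\Pi$: multiplication by $\Pi$ induces an $\F_\fp$-linear isomorphism $W_i \to W_{i+1}$ which is semilinear with respect to the $|\fp|$-power Frobenius on $\F_\fp^{(d)}$. Choosing a basis vector $v$ of $W_i$ and applying $g\in\rG_K$, the identity $g(\Pi v) = \Pi(g v) = \Pi(\chi_i(g)v) = \chi_i(g)^{|\fp|}\,\Pi v$ forces
\[
\chi_{i+1} = \chi_i^{|\fp|}, \qquad \text{hence } \chi_i = \varrho_{\phi,\fp}^{|\fp|^{\,i-d+1}}
\]
where the exponents are read modulo $|\fp|^d - 1$ (the order of $(\F_\fp^{(d)})^\times$).

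Finally, the determinant of the filtered representation $\pi_{\phi,\fp}$ equals the product of the characters on the graded pieces, so
\[
\det(\pi_{\phi,\fp}) \;=\; \prod_{i=0}^{d-1}\chi_i \;=\; \varrho_{\phi,\fp}^{\sum_{i=0}^{d-1} |\fp|^{i-d+1}} \;=\; \varrho_{\phi,\fp}^{1+|\fp|+\cdots+|\fp|^{d-1}},
\]
where the last equality uses that the exponent $1+|\fp|+\cdots+|\fp|^{d-1}$ is fixed by multiplication by $|\fp|$ modulo $|\fp|^d - 1$. The identification with $\Nr_{\F_\fp^{(d)}/\F_\fp}(\varrho_{\phi,\fp})$ is then immediate from the standard expression for the norm as the product over the Galois conjugates $\alpha, \alpha^{|\fp|}, \ldots, \alpha^{|\fp|^{d-1}}$.

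I expect the main (mild) obstacle to be bookkeeping: making sure the fixed identification $\cD_\fp/\Pi\cD_\fp\cong \F_\fp^{(d)}$ is used consistently when writing each $\chi_i$, and verifying the semilinearity/Frobenius twist in the exact direction stated. Once the filtration and the relation $\chi_{i+1}=\chi_i^{|\fp|}$ are in place, the computation of the determinant is a direct telescoping calculation.
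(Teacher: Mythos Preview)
Your proof is correct and is essentially the same argument as the paper's. The paper fixes the basis $\{1,\Pi,\dots,\Pi^{d-1}\}$ of $\phi[\fp]$ over $\F_\fp^{(d)}$, writes out $\pi_{\phi,\fp}(g)$ explicitly as a lower-triangular matrix using the relation $g(\Pi^i) = \Pi^i(g\cdot 1)$ together with $\Pi\alpha=\alpha^{|\fp|}\Pi$, and reads off the determinant as the product of the diagonal entries $\alpha_1,\alpha_1^{|\fp|},\dots,\alpha_1^{|\fp|^{d-1}}$; your filtration by $V_i=\Pi^i\phi[\fp]$ and the relation $\chi_{i+1}=\chi_i^{|\fp|}$ are precisely the conceptual restatement of this matrix computation.
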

\begin{proof}
Fix $\{1, \Pi, \dots, \Pi^{d-1}\}$ as a  basis of $O_D/\fp O_D$ considered as an 
$\F_\fp^{(d)}$-vector space. Let $g\in \rG_K$ and 
$$
g\circ 1=\alpha_1+\alpha_2 \Pi+\cdots+\alpha_{d}\Pi^{d-1}, 
$$
where $\alpha_1\in (\F_\fp^{(d)})^\times$ and $\alpha_2, \dots, \alpha_d\in \F_\fp^{(d)}$. 
Since the action of $\Pi$ commutes with the action of $g$, we get 
$$
g\circ \Pi =\Pi(g\circ 1)=\alpha_1^{|\fp|}\Pi+\alpha_2^{|\fp|} \Pi^2+\cdots+\alpha_{d-1}^{|\fp|}\Pi^{d-1}.
$$
Continuing in this manner, we obtain
\begin{equation}\label{eqpi(g)}
\pi_{\phi,\fp}(g)=\begin{pmatrix} \alpha_1 & 0 & 0 &  \cdots &  0 \\ 
\alpha_2 & \alpha_1^{|\fp|} & 0 &   \cdots &  0 \\ 
\alpha_3 & \alpha_2^{|\fp|} & \alpha_1^{|\fp|^{2}} &  \cdots &  0 \\ 
\vdots & \vdots & \vdots &  \ddots &  \vdots\\
\alpha_d & \alpha_{d-1}^{|\fp|} & \alpha_{d-2}^{|\fp|^2} &  \cdots &  \alpha_1^{|\fp|^{d-1}}
\end{pmatrix}.
\end{equation}
The canonical isogeny characters are the diagonal entries of $\pi_{\phi,\fp}$. Now the claim of the lemma is clear. 
\end{proof}

Let $M(\phi)$ be the $O_D$-motive associated to $\phi$; cf. \cite[$\S$3]{PapRMS}. Recall that $M(\phi)$ is a left $O_D^\opp\otimes_{\F_q} K[\tau]$-module,  
which is locally free $O_D^\opp\otimes_{\F_q} K$-module of rank $1$ and free $K[\tau]$-module of rank $d$. 
A construction of Lafforgue \cite[p. 26]{Lafforgue} associates to $M(\phi)$ an $A$-motive $\det(M(\phi))$ of rank $1$ and dimension $1$, 
along with a map $\det: M(\phi)\to \det(M(\phi))$ which on $M(\phi)$ as a locally free $O_D^\opp\otimes K$-module of rank $1$ is 
given by the reduced norm on $D^\opp$. Anderson's duality \cite[Thm. 1]{Anderson} associates to $\det(M(\phi))$ a Drinfeld $A$-module of rank $1$, 
which we will call the \textit{determinant} of $\phi$ and denote by $\det(\phi)$. 
Let $$\chi_{\phi, \fp}: \rG_K\to \Aut(\det(\phi)[\fp]) \cong \F_\fp^\times$$ be the character by which $\rG_K$ acts on the $\fp$-torsion of $\det(\phi)$. 

\begin{prop}\label{prop_chiphi} 
	$$
	\chi_{\phi, \fp}= \Nr_{\F_\fp^{(d)}/\F_\fp}(\varrho_{\phi, \fp}). 
	$$
\end{prop}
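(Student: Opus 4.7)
The strategy is to show that $\chi_{\phi,\fp}$ equals the character $\det(\pi_{\phi,\fp})$, after which Lemma \ref{lemCCdetpi} yields the desired formula $\chi_{\phi,\fp} = \Nr_{\F_\fp^{(d)}/\F_\fp}(\varrho_{\phi,\fp})$. The plan is to express both $\chi_{\phi,\fp}(g)$ and $\det(\pi_{\phi,\fp}(g))$ as the reduction modulo $\fp$ of the reduced norm $\Nr_{D_\fp/F_\fp}(r_g)$ of a single element $r_g \in \cD_\fp^\times$ that records the Galois action on $T_\fp(\phi)$.

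First I would invoke the isomorphism $T_\fp(\phi) \cong \cD_\fp := O_D \otimes_A A_\fp$ of left $O_D$-modules from \eqref{eqTfpphi}. Because $O_D$ acts on $\phi$ by $K$-rational isogenies, the action of each $g \in \rG_K$ on $T_\fp(\phi)$ commutes with the $O_D$-action, and hence is given by right multiplication by some element $r_g \in \cD_\fp^\times$. As computed in the proof of Lemma \ref{lemCCdetpi}, its reduction modulo $\fp$ is $r_g \bmod \fp = g \circ 1 = \alpha_1 + \alpha_2 \Pi + \cdots + \alpha_d \Pi^{d-1}$ inside $O_D/\fp O_D \cong \F_\fp^{(d)}[\Pi]/\Pi^d$. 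Since the unramified extension $F_\fp^{(d)}$ splits $D_\fp$ --- concretely, via the left-regular representation of $\cD_\fp$ on itself viewed as a rank-$d$ right $F_\fp^{(d)}$-module with basis $\{1,\Pi,\dots,\Pi^{d-1}\}$ --- the reduced norm of any $x \in \cD_\fp$ is the $F_\fp^{(d)}$-determinant of this representation. Applying this to $x = r_g$ and reducing modulo $\fp$, the resulting matrix is precisely $\pi_{\phi,\fp}(g)$ as in \eqref{eqpi(g)}, so $\Nr_{D_\fp/F_\fp}(r_g) \bmod \fp = \det(\pi_{\phi,\fp}(g))$.

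Next I would identify this reduced norm with $\chi_{\phi,\fp}(g)$ via Lafforgue's construction of $\det(M(\phi))$: the map $M(\phi) \to \det(M(\phi))$ is by definition the reduced norm on $M(\phi)$ regarded as a locally free $O_D^\opp \otimes K$-module of rank $1$. Passing through Anderson's duality and to the $\fp$-adic Tate module, this construction realizes the induced morphism on Tate modules as the reduced norm $\cD_\fp \to A_\fp$ under the identification $T_\fp(\phi) \cong \cD_\fp$; consequently the action of $g$ on $T_\fp(\det(\phi)) \cong A_\fp$ is multiplication by $\Nr_{D_\fp/F_\fp}(r_g)$. Reducing modulo $\fp$ gives $\chi_{\phi,\fp}(g) \equiv \Nr_{D_\fp/F_\fp}(r_g) \pmod{\fp}$, and combining this with the previous paragraph and Lemma \ref{lemCCdetpi} finishes the proof.

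The principal obstacle is this last identification: carefully tracing Lafforgue's determinant construction and Anderson's duality through reduction modulo $\fp$, and verifying that the induced map on $\fp$-torsion is indeed the reduction of the reduced norm $\Nr_{D_\fp/F_\fp}$ on the Tate-module level. Once this compatibility is secured, the matrix-theoretic content --- essentially a repetition of the calculation used in Lemma \ref{lemCCdetpi} --- is routine, with the key point being that $\Pi$ is nilpotent in $O_D/\fp O_D$, so only the constant coefficient $\alpha_1$ of $r_g \bmod \fp$ contributes to the reduced norm.
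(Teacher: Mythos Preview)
Your proposal is correct and follows essentially the same route as the paper: both arguments compute the reduced norm on $O_D/\fp O_D$ explicitly (you via the left-regular representation giving the matrix \eqref{eqpi(g)}, the paper via an explicit embedding $\iota: O_D/\fp O_D \hookrightarrow M_d(\F_\fp^{(d)})$), both invoke Lafforgue's determinant construction together with Anderson's duality to identify the induced map $\phi[\fp]\to\det(\phi)[\fp]$ with this reduced norm, and both finish with Lemma~\ref{lemCCdetpi}. The only cosmetic difference is that you lift everything to the Tate module $T_\fp(\phi)\cong\cD_\fp$ and then reduce modulo $\fp$, whereas the paper works directly at the level of $\fp$-torsion $\phi[\fp]\cong O_D/\fp O_D$; the paper's approach is marginally more economical since the characters in question are defined on $\fp$-torsion, but the content is the same, and the compatibility you flag as the ``principal obstacle'' is exactly the step the paper also takes for granted.
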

\begin{proof}
	By \eqref{eqcD_fp}, the map 
	\begin{align*} 
	\iota: O_D/\fp O_D &\to M_d(\F_\fp^{(d)})\\ 
	\alpha_1+\alpha_2\Pi+\cdots+\alpha_d\Pi^{d-1} & \mapsto 
	\begin{pmatrix} \alpha_1 & \alpha_2 & \alpha_3 &  \cdots &  \alpha_d \\ 
	0 & \alpha_1^{|\fp|} & \alpha_2^{|\fp|} &   \cdots &  \alpha_{d-1}^{|\fp|} \\ 
	0 & 0 & \alpha_1^{|\fp|^{2}} &  \cdots &  \alpha_{d-2}^{|\fp|^2} \\ 
	\vdots & \vdots & \vdots &  \ddots &  \vdots\\
	0 &0 &0 &  \cdots &  \alpha_1^{|\fp|^{d-1}}
	\end{pmatrix}
	\end{align*}
	is an embedding. (As an algebra, $O_D/\fp O_D$ is generated by $\F_\fp^{(d)}$ and $\Pi$ and, as one easily checks, 
	$\iota(\Pi)\iota(\alpha)=\iota(\alpha)^{|\fp|}\iota(\Pi)$ for any $\alpha\in \F_\fp^{(d)}$.) Hence the reduced norm $\Nr: O_D\to A$ 
	induces the map 
	\begin{align}\label{eqNrphi[p]}
	\Nr: O_D/\fp O_D &\to \F_\fp, \\ 
	\alpha_1+\alpha_2\Pi+\cdots+\alpha_d\Pi^{d-1} &\mapsto \alpha_1^{1+|\fp|+\cdots+|\fp|^{d-1}}=\Nr_{\F_\fp^{(d)}/\F_\fp}(\alpha_1). \nonumber
	\end{align} 
	
	By \cite[Prop. 1.8.3]{Anderson}, $\phi[\fp]$ is dual to $M(\phi)/\fp M(\phi)$, hence the reduced norm 
	$$M(\phi)/\fp M(\phi) \to \det(M(\phi))/\fp \det(M(\phi))
	$$ 
	corresponds to the map 
	$$\Nr: \phi[\fp]\cong O_D/\fp O_D\to \F_\fp\cong \det(\phi)[\fp]$$ constructed in \eqref{eqNrphi[p]}. 
	Comparing this with Lemma \ref{lemCCdetpi}, we see that the following diagram commutes 
	$$
	\xymatrix{\phi[\fp] \ar[rrr]^{\pi_{\phi, \fp}(g)} \ar[d]_{\Nr} & & &\phi[\fp] \ar[d]_{\Nr} \\ 
		\det(\phi)[\fp] \ar[rrr]^{\Nr_{\F_\fp^{(d)}/\F_\fp}(\varrho_{\phi, \fp}(g))} & & &
		\det(\phi)[\fp]}
	$$
	Since Lafforgue's determinant construction is equivariant with respect to the action of $\rG_K$,   
	we conclude that $\chi_{\phi, \fp}(g)=\Nr_{\F_\fp^{(d)}/\F_\fp}(\varrho_{\phi, \fp}(g))$ for all $g\in \rG_K$. 
\end{proof}

\begin{lem}\label{lem_CarChi}
Let $C: A\to K[\tau]$, $C_T=\gamma(T)+\tau$, be the Carlitz module over $K$. Let 
$$\chi_{C, \fp}: \rG_K\to \Aut(C[\fp])\cong \F_\fp^\times. 
$$	
Then, 
$$\chi_{\phi, \fp}^{q-1}=\chi_{C, \fp}^{q-1}.
$$
\end{lem}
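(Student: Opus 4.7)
The plan is to exploit the fact that $\det(\phi)$ and the Carlitz module $C$ are both rank~$1$ Drinfeld $A$-modules defined over $K$, so over $K^\sep$ they become isomorphic, and the only Galois ambiguity in such an isomorphism lies in $\mu_{q-1}(K^\sep)=\F_q^\times$. Raising characters to the $(q-1)$-th power will kill precisely this ambiguity.

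First I would write $\det(\phi)_T=\gamma(T)+g\tau$ for some $g\in K^\times$. Choose $u\in K^{\sep,\times}$ with $u^{q-1}=g$, and observe via direct computation in $K^\sep[\tau]$ that
$$
u^{-1}\,\det(\phi)_T\,u = \gamma(T) + g\,u^{q-1}\tau \cdot u^{-(q-1)} \cdot \cdots
$$
(done properly: $u\tau u^{-1}=u^{1-q}\tau$, so $u\,\det(\phi)_T\,u^{-1}=\gamma(T)+gu^{1-q}\tau$, and with $u^{q-1}=g$ the leading coefficient becomes $1$), i.e.\ conjugation by $u$ yields the Carlitz module. Therefore $C[\fp]=u\cdot\det(\phi)[\fp]$ as $\F_\fp$-submodules of $\gm_a(K^\sep)$.

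Next I would track the Galois action. For each $\sigma\in\rG_K$, since $u^{q-1}=g\in K^\times$, one has $\sigma(u)^{q-1}=u^{q-1}$, so $\eta(\sigma):=\sigma(u)/u$ lies in $\mu_{q-1}(K^\sep)=\F_q^\times\subseteq\F_\fp^\times$, and $\eta:\rG_K\to\F_q^\times$ is a character. Picking an $\F_\fp$-generator $y$ of $\det(\phi)[\fp]$, the element $uy$ generates $C[\fp]$, and
$$
\sigma(uy)=\sigma(u)\,\sigma(y)=\eta(\sigma)\,u\cdot\chi_{\phi,\fp}(\sigma)\,y=\eta(\sigma)\,\chi_{\phi,\fp}(\sigma)\cdot uy,
$$
which gives the identity $\chi_{C,\fp}=\eta\cdot\chi_{\phi,\fp}$ as characters $\rG_K\to\F_\fp^\times$.

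Finally, since $\eta$ takes values in the group $\F_q^\times$ of exponent $q-1$, we have $\eta^{q-1}=1$, and raising the identity $\chi_{C,\fp}=\eta\cdot\chi_{\phi,\fp}$ to the $(q-1)$-th power yields $\chi_{\phi,\fp}^{q-1}=\chi_{C,\fp}^{q-1}$, as required. There is no real obstacle here; the only point deserving care is the verification that $\det(\phi)$ is genuinely a rank~$1$ Drinfeld $A$-module over $K$ (so that the leading coefficient $g$ lies in $K^\times$ and the comparison with $C$ takes place already over $K^\sep$), but this was established in the construction of $\det(\phi)$ via Lafforgue's determinant and Anderson's duality immediately preceding the statement.
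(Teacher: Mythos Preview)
Your proposal is correct and follows essentially the same approach as the paper's proof: the paper writes $\det(\phi)_T=\gamma(T)+b\tau$, picks a $(q-1)$-th root $\beta$ of $b$, observes that $\beta\cdot\det(\phi)\cdot\beta^{-1}=C$, defines the character $\chi_b(\sigma)=\sigma(\beta)/\beta\in\F_q^\times$, deduces $\chi_{C,\fp}=\chi_{\phi,\fp}\otimes\chi_b$, and concludes by raising to the $(q-1)$-th power. Your $g$, $u$, and $\eta$ are exactly the paper's $b$, $\beta$, and $\chi_b$.
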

\begin{proof}
	Denote $\rho:=\det(\phi)$. Then $\rho$ is defined by $\rho_T=\gamma(T)+b\tau$ for some $0\neq b\in K$. 
	Let $\beta$ be a fixed $(q-1)$-th root of $b$, so that $\beta \rho_T\beta^{-1}=C_T$. Denote by $C_\fp(x)$ 
	the $\F_q$-linear polynomial whose roots constitute $C[\fp]$ (for example, $C_T(x)=\gamma(T)x+x^q$), and similarly for $\rho_\fp(x)$.  Then 
	$\beta \rho_\fp(x)=C_\fp(\beta x)$. This implies that multiplication by $\beta$ gives an isomorphism 
	$$
	\beta: \rho[\fp] \xrightarrow{\sim}  C[\fp] \quad  
	\alpha \mapsto \beta\alpha. 
	$$
	Let 
	$\chi_b: \rG_K \to \F_q^\times$
	be the character $g\mapsto g(\beta)/\beta$, which is independent of the choice of the $(q-1)$-th root $\beta$ of $b$. We see 
	from the above isomorphism that 
	$
	\chi_{C, \fp}=\chi_{\phi, \fp}\otimes \chi_b. 
	$
	Finally, since $\chi_b^{q-1}=1$, we get $\chi_{C, \fp}^{q-1}=\chi_{\phi, \fp}^{q-1}$. 
\end{proof}

\begin{cor}\label{cor2.3}
	$$\Nr_{\F_\fp^{(d)}/\F_\fp}(\varrho_{\phi, \fp})^{q-1}=\chi_{C, \fp}^{q-1}.$$ 
\end{cor}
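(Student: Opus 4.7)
The plan is to simply combine the two immediately preceding results. From Proposition \ref{prop_chiphi}, the character $\chi_{\phi, \fp}$ by which $\rG_K$ acts on $\det(\phi)[\fp]$ is exactly the norm $\Nr_{\F_\fp^{(d)}/\F_\fp}(\varrho_{\phi, \fp})$. From Lemma \ref{lem_CarChi}, the $(q-1)$-th power of $\chi_{\phi, \fp}$ agrees with the $(q-1)$-th power of the Carlitz torsion character $\chi_{C, \fp}$. Substituting the formula from Proposition \ref{prop_chiphi} into the identity of Lemma \ref{lem_CarChi} yields the claim:
\[
\Nr_{\F_\fp^{(d)}/\F_\fp}(\varrho_{\phi, \fp})^{q-1}=\chi_{\phi, \fp}^{q-1}=\chi_{C, \fp}^{q-1}.
\]
There is no obstacle here; the corollary is an immediate consequence of the previous proposition and lemma, and is recorded as a corollary precisely because it packages the two identities into a single statement that will be convenient for applications in later sections (e.g., to congruences satisfied by Frobenius traces and to constraints on ray class groups of $K$).
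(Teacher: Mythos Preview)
Your proof is correct and matches the paper's own argument exactly: the paper simply states that the corollary follows from Proposition~\ref{prop_chiphi} and Lemma~\ref{lem_CarChi}, which is precisely the substitution you carry out.
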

\begin{proof}
	Follows from Proposition \ref{prop_chiphi} and Lemma \ref{lem_CarChi}. 
\end{proof}

Now suppose that $K$, the field over which the Drinfeld-Stuhler $O_D$-module $\phi$ is defined, is a finite extension of $F$.  
Denote by $K^\ab$ the maximal abelian extension of $K$ in $K^\sep$. Let $\rG_K^\ab=\Gal(K^\ab/K)$. Note that a canonical isogeny claracter 
factors through $\varrho_{\phi, \fp}:\rG_K^\ab\to (\F_\fp^{(d)})^\times$. Given a place $v$ of $K$, denote by $K_v$ (resp. $O_v$) the completion 
of $K$ at $v$ (resp. the ring of integers in $K_v$). Let 
$$
\omega_v: K_v^\times \To \rG_K^\ab
$$
be the \textit{local Artin homomorphism} (mapping $K_v^\times$ to the decomposition group of $v$ in $\rG_K^\ab$).  
Let 
$
\widetilde{r}_{\phi, \fp}(v): K_v^\times \to (\F_\fp^{(d)})^\times$ be the composition 
\begin{equation}\label{eqwidetilder}
K_v^\times \xrightarrow{\omega_v} \rG_K^\ab \xrightarrow{\varrho_{\phi, \fp}} (\F_\fp^{(d)})^\times, 
\end{equation}
and let $r_{\phi, \fp}(v): O_v^\times \to (\F_\fp^{(d)})^\times$ be the restriction of $\widetilde{r}_{\phi, \fp}(v)$ to $O_v^\times$. 

\begin{prop}\label{propUnramPinf} With notation as above, we have:
	\begin{enumerate}
		\item If $v$ does not lie over $\fp$ or $\infty$, then $r_{\phi, \fp}(v)^{q^d-1}=1$. 
		\item If $v$ lies over $\infty$, then $\widetilde{r}_{\phi, \fp}(v)^{\frac{|\fp|^d-1}{|\fp|-1}(q-1)}=1$. 
	\end{enumerate}
\end{prop}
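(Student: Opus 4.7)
The plan is to handle parts (1) and (2) by quite different routes: part (1) is a bookkeeping exercise built on potentially good reduction and local class field theory, while part (2) reduces, via Corollary \ref{cor2.3}, to a classical fact about the Carlitz cyclotomic extension.

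For part (1), I would first apply Proposition \ref{lem4.5} to $\phi$ viewed as a Drinfeld-Stuhler module over $K_v$: it furnishes a cyclic totally tamely ramified extension $L/K_v$ of degree $e$ dividing $q^d-1$ over which $\phi$ has good reduction. Because $v$ does not lie over $\fp$, the prime $\fp$ is coprime to the residue characteristic of $K_v$, so good reduction over $L$ implies that the Tate module $T_\fp(\phi)$, and hence the $O_D$-submodule $\cC_{\phi,\fp}\subset \phi[\fp]$, is unramified as a $\rG_L$-module. In particular $\varrho_{\phi,\fp}|_{I_L}=1$. Since $L/K_v$ is Galois and totally ramified, $I_L$ is normal in $I_{K_v}$ with cyclic quotient of order $e$, so the image of $I_{K_v}$ under $\varrho_{\phi,\fp}$ has order dividing $e\mid q^d-1$. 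By local class field theory $\omega_v$ sends $O_v^\times$ onto the inertia subgroup at $v$ inside $\rG_K^\ab$, so $r_{\phi,\fp}(v)^{q^d-1}=1$.

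For part (2), I would start from Corollary \ref{cor2.3}, combined with the identity
$\Nr_{\F_\fp^{(d)}/\F_\fp}(\varrho_{\phi,\fp}) = \varrho_{\phi,\fp}^{(|\fp|^d-1)/(|\fp|-1)}$, to obtain
$$
\varrho_{\phi,\fp}^{\frac{|\fp|^d-1}{|\fp|-1}(q-1)} \;=\; \chi_{C,\fp}^{q-1}.
$$
After composing with $\omega_v$, it suffices to show that $\chi_{C,\fp}^{q-1}\circ \omega_v=1$ on the full group $K_v^\times$ (not merely on $O_v^\times$) whenever $v\mid\infty$. Via the functoriality of the local Artin map along the inclusion $F_\infty\hookrightarrow K_v$ (using the norm $N_{K_v/F_\infty}$), this further reduces to proving that $\chi_{C,\fp}|_{\rG_{F_\infty}}$ has image of order dividing $q-1$.

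The last point is the classical structure of $\infty$ in the Carlitz cyclotomic extension $F(C[\fp])/F$: its Galois group is $\F_\fp^\times$, and at $\infty$ the decomposition group coincides with the inertia group, both equal to $\F_q^\times\subset\F_\fp^\times$ (ramification index $q-1$, residue degree $1$). Hence $\chi_{C,\fp}|_{\rG_{F_\infty}}$ takes values in $\F_q^\times$, and is killed by raising to the $(q-1)$-th power. The main obstacle is not really a difficulty but a careful invocation of this decomposition behavior of $\infty$ in the Carlitz tower; it is precisely this fact that forces the exponent in part (2) to take the particular form $\frac{|\fp|^d-1}{|\fp|-1}(q-1)$.
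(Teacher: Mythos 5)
Your proposal is correct and follows essentially the same route as the paper: part (1) is the identical argument (Proposition \ref{lem4.5} plus the fact that $\phi[\fp]$ becomes unramified over the tame extension $L$, then local class field theory), and part (2) uses the same reduction via Corollary \ref{cor2.3} to the triviality of $\chi_{C,\fp}^{q-1}$ on $\rG_{\Fi}$. The only cosmetic difference is at the very end: where you cite the known decomposition/inertia structure of $\infty$ in $F(C[\fp])/F$ (decomposition group $=\F_q^\times$), the paper derives the same order-dividing-$(q-1)$ statement from Carlitz's explicit computation $\Fi(\La_C)=\Fi(\sqrt[q-1]{T-T^q})$ together with the rationality of all Carlitz torsion over $\Fi(\La_C)$ — two formulations of the same classical fact.
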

\begin{proof}
By Proposition \ref{lem4.5}, $\phi$ acquires good reduction over a totally tamely ramified extension $L$ of $K_v$ 
of degree dividing $q^d-1$.  If $v$ does not lie over $\fp$ or $\infty$, then $\phi[\fp]$ is unramified over $L$ 
(as $\phi[\fp]$ extends to an \'etale group scheme over the ring of integers of $L$). Hence 
$\varrho_{\phi, \fp}^{q^d-1}$ is the trivial character when restricted to the inertia group at $v$. 
Since by local class field theory $\omega_v$
maps $O_v^\times$ into the inertia group at $v$, we conclude that $r_{\phi, \fp}(v)^{q^d-1}$ is the trivial homomorphism.  
This proves (1). 

To prove (2), first observe that by Corollary \ref{cor2.3} we have 
$$\varrho_{\phi, \fp}^{\frac{|\fp|^d-1}{|\fp|-1}(q-1)}=\chi_{C, \fp}^{q-1}.$$
Hence it is enough to show that $\chi_{C, \fp}^{q-1}$ is trivial when considered as a character of $\rG_{\Fi}$. Let 
$\La_C\subset \Ci$ be the $A$-lattice of rank $1$ corresponding to the Carlitz module $C$, where $\Ci$ is the completion of 
$\overline{F}_\infty$; cf. \cite[$\S$3]{Drinfeld}. 
Carlitz explicitly computed $\La_C$, and from that calculation one easily deduces that 
$\Fi(\La_C)=\Fi(\sqrt[q-1]{T-T^q})$; see \cite[p. 236]{Rosen}. In particular, $[\Fi(\La_C):\Fi]=q-1$. On the 
other hand, any torsion point of $C$ is rational over $\Fi(\La_C)$; cf. \cite[Exercise 13.10]{Rosen}. This implies that 
$\chi_{C, \fp}$ restricted to $\rG_{\Fi}$ has order dividing $q-1$. Hence $\chi_{C, \fp}^{q-1}=1$ on $\rG_{\Fi}$. 
\end{proof}

Fix a prime $\fP$ of $K$ over $\fp$. Let $f_\fP$ be the residue degree of $\fP$ over $\fp$, i.e., the residue 
field $\F_\fP=\F_\fp^{(f_\fP)}$ of $\fP$ is a degree $f_\fP$ extension of $\F_\fp$. Let 
$$
t_\fP=\gcd(f_\fP, d). 
$$
For $u\in O_\fP^\times$ denote by $\bar{u}\in \F_\fP^\times$ the reduction of $u$ modulo $\fP$. 

\begin{lem}\label{lemrfP}
	There is a unique integer $0\leq c_\fP\leq |\fp|^{t_\fP}-1$ such that 
	$$
	r_{\phi, \fp}(\fP)(u)=\Nr_{\F_\fP/\F_\fp^{(t_\fP)}}(\bar{u})^{-c_\fP}
	$$
	for all $u\in O_\fP^\times$. 
\end{lem}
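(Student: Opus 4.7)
The plan is to first use a tameness argument to cut $r_{\phi,\fp}(\fP)$ down to a character of the residue field $\F_\fP^\times$, and then invoke the classification of homomorphisms between finite cyclic groups to identify it as a power of the norm.

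For tameness, observe that the target $(\F_\fp^{(d)})^\times$ has order $|\fp|^d-1$, which is coprime to the residue characteristic $p$ of $K_\fP$ (since $|\fp|$ is a power of $p$). By local class field theory, $\omega_\fP$ sends $1+\fP O_\fP$ into the wild (pro-$p$) inertia subgroup of $\rG_{K_\fP}^\ab$, and any homomorphism from a pro-$p$ group to a group of order prime to $p$ is trivial. Consequently $r_{\phi,\fp}(\fP)$ descends through $O_\fP^\times/(1+\fP O_\fP) \cong \F_\fP^\times$ to a character $\chi:\F_\fP^\times \to (\F_\fp^{(d)})^\times$, so that $r_{\phi,\fp}(\fP)(u) = \chi(\bar u)$.

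For the identification, since $\chi$ is a homomorphism of cyclic groups of orders $|\fp|^{f_\fP}-1$ and $|\fp|^d-1$, its image lies in the unique subgroup of $(\F_\fp^{(d)})^\times$ of order $\gcd(|\fp|^{f_\fP}-1,\,|\fp|^d-1)=|\fp|^{t_\fP}-1$. Using $t_\fP\mid d$, this subgroup equals $(\F_\fp^{(t_\fP)})^\times$. Since also $t_\fP\mid f_\fP$, the field $\F_\fp^{(t_\fP)}$ sits inside $\F_\fP$ and the norm $\Nr_{\F_\fP/\F_\fp^{(t_\fP)}}$ is a surjective character. The Hom group $\Hom(\F_\fP^\times,(\F_\fp^{(t_\fP)})^\times)$ is cyclic of order $|\fp|^{t_\fP}-1$, and the norm, being surjective, has maximal order in this group and therefore generates it. Hence $\chi = \Nr_{\F_\fP/\F_\fp^{(t_\fP)}}^{-c_\fP}$ for a unique $c_\fP$ in the stated range, yielding the claimed formula.

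The issue one might worry about is the tameness of $\varrho_{\phi,\fp}$ at primes $\fP$ dividing $\fp$, where the full $\phi[\fp]$ is typically wildly ramified. The saving observation is that only the one-dimensional canonical quotient $\cC_{\phi,\fp}$ is relevant here, and its Galois action takes values in a prime-to-$p$ group, making the vanishing of wild ramification automatic without any appeal to the potentially good reduction machinery of Section \ref{sPGR}. Once this is in hand, the remainder is routine character-group arithmetic.
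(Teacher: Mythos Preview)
Your proof is correct and follows essentially the same route as the paper's: factor through the tame quotient $\F_\fP^\times$, observe the image lands in the cyclic subgroup $(\F_\fp^{(t_\fP)})^\times$, and use that the norm generates the relevant $\Hom$ group. The only cosmetic difference is that the paper phrases the tameness step as ``$K(\cC_{\phi,\fp})/K$ is tamely ramified'' while you argue directly that the target $(\F_\fp^{(d)})^\times$ has order prime to $p$; these are the same observation, and your final paragraph correctly notes that this makes any appeal to the potentially good reduction results unnecessary for this particular lemma.
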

\begin{proof}
	The Artin homomorphism $\omega_\fP$ maps $O_\fP^\times$ into the inertia subgroup $I_\fP\subset \rG_K^\ab$ of $\fP$. 
	Since $K(\cC_{\phi, \fp})/K$ is tamely ramified, $	r_{\phi, \fp}(\fP)$ factors through the tame quotient of $I_\fP$, which is isomorphic to 
	$(O_\fP/\fP)^\times\cong \F_\fP^\times$; 
	cf. \cite[$\S$IV. 2]{SerreLF}. The image of any homomorphism $\F_\fP^\times\to (\F_\fp^{(d)})^\times$ is contained 
	in the unique cyclic subgroup of $(\F_\fp^{(d)})^\times$ of order 
	$$
	\gcd(|\fp|^{f_\fP}-1, |\fp|^d-1)=|\fp|^{\gcd(f_\fP, d)}-1=|\fp|^{t_\fP}-1.
	$$
	Thus, $r_{\phi, \fp}(\fP)$ is a homomorphism $O_\fP^\times\xrightarrow{\Mod{\fP}} \F_\fP^\times\to (\F_\fp^{(t_\fP)})^\times$. 
	Finally, observe that the norm homomorphism $\Nr_{\F_\fP/\F_\fp^{(t_\fP)}}: \F_\fP^\times\to (\F_\fp^{(t_\fP)})^\times$ 
	is surjective, and, since both groups are cyclic, basic group theory implies that there is a unique $0\leq c_\fP\leq |\fp|^{t_\fP}-1$ 
	such that $r_{\phi, \fp}(\fP)(u)=\Nr_{\F_\fP/\F_\fp^{(t_\fP)}}(\bar{u})^{-c_\fP}$ for all $u\in O_\fP^\times$. 
\end{proof}

\begin{lem}\label{lemchi_fP}
	Let $e_\fP$ be the ramification index of $\fP$ over $\fp$. Let $\chi_\fP: O_\fP^\times\to \F_\fp^\times$ be the 
	composition 
	$$
	O_\fP^\times\xrightarrow{\omega_\fP} \rG_K^\ab\xrightarrow{\chi_{C, \fp}} \Aut(C[\fp])\cong \F_\fp^\times. 
	$$
	Then $\chi_\fP(u)=\Nr_{\F_\fP/\F_\fp}(\bar{u})^{-e_\fP}$ for all $u\in O_\fP^\times$. 
\end{lem}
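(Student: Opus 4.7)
The plan is to reduce the statement to the local Carlitz character on $F_\fp^\times$ via functoriality of local class field theory, and then to identify that character explicitly using the Lubin--Tate nature of the Carlitz module at $\fp$.

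First I would introduce the auxiliary character
$$\chi_\fp^C\colon F_\fp^\times \xrightarrow{\omega_\fp} \rG_F^\ab \xrightarrow{\chi_{C,\fp}} \F_\fp^\times.$$
The standard base-change compatibility of local reciprocity, applied to $K_\fP/F_\fp$, says that $\omega_\fP$ composed with the natural restriction $\rG_K^\ab\to \rG_F^\ab$ equals $\omega_\fp\circ N_{K_\fP/F_\fp}$. Since $\chi_{C,\fp}$ is a character of $\rG_F^\ab$, this yields the key identity
$$\chi_\fP(u)\;=\;\chi_\fp^C\!\bigl(N_{K_\fP/F_\fp}(u)\bigr)\quad\text{for every }u\in O_\fP^\times.$$

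Next I would pin down $\chi_\fp^C$ on $A_\fp^\times$. The formal group of $C$ at $\fp$ is a Lubin--Tate formal $A_\fp$-module for the uniformizer $\fp$, since the $\F_q$-linear polynomial $C_\fp(x)$ satisfies $C_\fp(x)\equiv \gamma(\fp)\,x\pmod{x^2}$ and $C_\fp(x)\equiv x^{|\fp|}\pmod{\fp}$. Consequently $F_\fp(C[\fp])/F_\fp$ is totally ramified of degree $|\fp|-1$ with Galois group canonically $\F_\fp^\times$, and the Lubin--Tate description of local reciprocity (in the geometric normalization used in \eqref{eqwidetilder}) gives
$$\chi_\fp^C(v)\;=\;\bar v^{-1}\in \F_\fp^\times\quad\text{for every }v\in A_\fp^\times.$$

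Finally I would compute the reduction of the norm. For $u\in O_\fP^\times$, the element $N_{K_\fP/F_\fp}(u)\in A_\fp^\times$ is the determinant of multiplication by $u$ on $O_\fP$ viewed as a free $A_\fp$-module of rank $e_\fP f_\fP$, hence its reduction modulo $\fp$ equals the $\F_\fp$-determinant of multiplication by $\bar u$ on $O_\fP/\fp O_\fP = O_\fP/\fP^{e_\fP}$. The filtration by the powers $\fP^i/\fP^{e_\fP}$ has $e_\fP$ successive subquotients, each canonically isomorphic to $\F_\fP$ with $\bar u$ acting by multiplication, so
$$\overline{N_{K_\fP/F_\fp}(u)}\;=\;N_{\F_\fP/\F_\fp}(\bar u)^{e_\fP}.$$
Substituting this into the identity from the first step gives $\chi_\fP(u)=N_{\F_\fP/\F_\fp}(\bar u)^{-e_\fP}$.

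The main subtlety is the consistent tracking of the sign in the second step: the normalization of the local Artin map (uniformizer mapping to geometric versus arithmetic Frobenius) determines whether the Lubin--Tate character of $v\in A_\fp^\times$ is $\bar v$ or $\bar v^{-1}$. In Corollary \ref{cor2.3} only the $(q-1)$-th power appears, which kills the ambiguity, but here the exponent $-e_\fP$ does not, so one must verify that the normalization implicit in \eqref{eqwidetilder} is the geometric one that produces $\chi_\fp^C(v)=\bar v^{-1}$, which is what yields the negative exponent in the stated formula.
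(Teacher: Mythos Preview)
Your proof is correct and takes a genuinely different route from the paper's. The paper works entirely over $K_\fP$: it computes the valuation $v(\alpha)=e_\fP/(|\fp|-1)$ of a root $\alpha$ of $C_\fp(x)/x$, invokes Serre's description of tame inertia characters (\cite[Prop.~7, \S1.3, Prop.~3]{SerreInventiones}) to identify $\chi_{C,\fp}$ on the tame quotient with $\theta_{|\fp|-1}^{e_\fP}$, and then reads off the formula from the explicit relationship between $\theta_{|\fp|-1}$, the norm $\Nr_{\F_\fP/\F_\fp}$, and the Artin map. Your approach instead exploits the fact that $C$ is already defined over $F$, so $\chi_{C,\fp}$ is the restriction to $\rG_K$ of a character of $\rG_F^\ab$; norm-functoriality of the local Artin map then reduces everything to the single Lubin--Tate computation $\chi_\fp^C(v)=\bar v^{-1}$ over $F_\fp$, after which the ramification index enters only through the elementary identity $\overline{N_{K_\fP/F_\fp}(u)}=\Nr_{\F_\fP/\F_\fp}(\bar u)^{e_\fP}$. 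Your argument is more structural and avoids the direct valuation computation and the three separate citations to \cite{SerreInventiones}; the paper's argument, on the other hand, stays within $K_\fP$ and makes no use of descent to $F$, which keeps it closer in spirit to the parallel Lemma~\ref{lemrfP}. Your caveat about the normalization of $\omega_\fP$ is well taken and is exactly the content of the paper's citation of \cite[Prop.~3]{SerreInventiones}.
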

\begin{proof}
	Let $C_\fp(x)=\fp x+\cdots+x^{|\fp|}$ be the linearized polynomial corresponding to $C_\fp$, where 
	$C_\fp$ is the image of $\fp$ under the Carlitz module homomorphism $C: A\to K[\tau]$. By \cite{HayesCFT},  
	$f(x)=C_\fp(x)/x$ is irreducible and separable over $F$, and the splitting field $L$ of $f$ is totally tamely ramified over $\fp$.  
  Let $v$ denote the unique extension of the normalized valuation on $K_\fP$ to $K_\fP^\sep$. For any root $\alpha$ 
  of $f$ we have 
 $
  v(\alpha)=e_\fP/(|\fp|-1). 
  $
  Let $\pi$ be a uniformizer of $K_\fP$. Let $\theta_{|\fp|-1}: I_\fP\to \F_\fp^\times$ 
  be the character 
  $g\mapsto g (\pi^{1/(|\fp|-1)})/\pi^{1/(|\fp|-1)}$ 
  of the inertia subgroup $I_\fP\subseteq \rG_K^\ab$ of $\fP$. 
  This character factors through the tame quotient $I_t$ of $I_\fP$. 
  According to \cite[Prop. 7]{SerreInventiones}, because $v(\alpha)=e_\fP/(|\fp|-1)$,  
  $$g(\alpha)/\alpha=\theta_{|\fp|-1}(g)^{e_\fP}\quad \text{for all } g\in I_t,$$ 
  i.e., $I_t$ acts on the roots of $f(x)$ by the character $\theta_{|\fp|-1}^{e_\fP}$. Thus, 
  \begin{equation}\label{eqLem3.9.1}
  \chi_{C, \fp}=\theta_{|\fp|-1}^{e_\fP}. 
  \end{equation}
  Next, by 
  \cite[$\S$1.3]{SerreInventiones}, for $\bar{u}\in \F_\fP^\times\subset I_t$, we have 
  \begin{equation}\label{eqLem3.9.2}
  \theta_{|\fp|-1}(\bar{u})^{e_\fP}=\Nr_{\F_\fP/\F_\fp}(\bar{u})^{e_\fP}. 
  \end{equation}
  Finally, by \cite[Prop. 3]{SerreInventiones}, for $u\in O_\fP^\times$, we have 
  \begin{equation}\label{eqLem3.9.3}
  \theta_{|\fp|-1}(\omega_\fP(u))=\theta_{|\fp|-1}(\bar{u}^{-1}). 
  \end{equation}
  Combining \eqref{eqLem3.9.1}, \eqref{eqLem3.9.2}, \eqref{eqLem3.9.3}, we arrive at the formula of the lemma. 
\end{proof}

\begin{prop}\label{prop3.10tce}
	$$\frac{d}{t_\fP}(q-1)c_\fP\equiv (q-1)e_\fP \mod  (|\fp|-1).$$ 
\end{prop}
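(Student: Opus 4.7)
The plan is to derive the congruence by pulling back the global identity of Corollary \ref{cor2.3} along the local Artin map at $\fP$ and matching exponents on the (cyclic) group $\F_\fp^\times$.

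First, I would restrict the character identity
$$\Nr_{\F_\fp^{(d)}/\F_\fp}(\varrho_{\phi,\fp})^{q-1}=\chi_{C,\fp}^{q-1}$$
to the image of $\omega_\fP$ on $O_\fP^\times$. By the definitions of $r_{\phi,\fp}(\fP)$ and $\chi_\fP$ in \eqref{eqwidetilder} and Lemma \ref{lemchi_fP}, this gives, for every $u\in O_\fP^\times$,
$$\Nr_{\F_\fp^{(d)}/\F_\fp}\bigl(r_{\phi,\fp}(\fP)(u)\bigr)^{q-1}=\chi_\fP(u)^{q-1}.$$

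Next I would plug in the explicit formulas from Lemmas \ref{lemrfP} and \ref{lemchi_fP}. The only nontrivial piece is to simplify the nested norm on the left. Since $\Nr_{\F_\fP/\F_\fp^{(t_\fP)}}(\bar u)$ lies in $\F_\fp^{(t_\fP)}\subseteq \F_\fp^{(d)}$ and is thus fixed by $\Gal(\F_\fp^{(d)}/\F_\fp^{(t_\fP)})$ (a group of order $d/t_\fP$), transitivity of the norm gives
$$\Nr_{\F_\fp^{(d)}/\F_\fp}(y)=\Nr_{\F_\fp^{(t_\fP)}/\F_\fp}(y)^{d/t_\fP}\quad\text{for }y\in \F_\fp^{(t_\fP)},$$
and hence
$$\Nr_{\F_\fp^{(d)}/\F_\fp}\!\Bigl(\Nr_{\F_\fP/\F_\fp^{(t_\fP)}}(\bar u)\Bigr)=\Nr_{\F_\fP/\F_\fp}(\bar u)^{d/t_\fP}.$$
Combining with Lemma \ref{lemrfP} shows the left-hand side equals $\Nr_{\F_\fP/\F_\fp}(\bar u)^{-(d/t_\fP)(q-1)c_\fP}$, while Lemma \ref{lemchi_fP} shows the right-hand side equals $\Nr_{\F_\fP/\F_\fp}(\bar u)^{-(q-1)e_\fP}$.

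Finally, I would use that the norm map $\Nr_{\F_\fP/\F_\fp}\colon \F_\fP^\times\to\F_\fp^\times$ is surjective, so as $u$ varies $\Nr_{\F_\fP/\F_\fp}(\bar u)$ ranges over the whole cyclic group $\F_\fp^\times$ of order $|\fp|-1$. Equality of the two displayed powers for every element of $\F_\fp^\times$ therefore forces the exponents to coincide modulo $|\fp|-1$, which is precisely the claimed congruence. There is no real obstacle here; the only point requiring care is the transitivity simplification of the composition of norms through the intermediate field $\F_\fp^{(t_\fP)}$.
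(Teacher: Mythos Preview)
Your proposal is correct and follows essentially the same approach as the paper's proof: restrict Corollary~\ref{cor2.3} along $\omega_\fP$, apply Lemmas~\ref{lemrfP} and~\ref{lemchi_fP} to both sides, collapse the nested norm via transitivity to get $\Nr_{\F_\fP/\F_\fp}(\bar u)^{d/t_\fP}$, and then use surjectivity of $\Nr_{\F_\fP/\F_\fp}$ onto the cyclic group $\F_\fp^\times$ to match exponents modulo $|\fp|-1$. Your justification of the norm-transitivity step is in fact slightly more explicit than the paper's, but the argument is the same.
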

\begin{proof}
By Corollary \ref{cor2.3}, $\Nr_{\F_\fp^{(d)}/\F_\fp}(\varrho_{\phi, \fp})^{q-1}=\chi_{C, \fp}^{q-1}$. Hence 
$$
\Nr_{\F_\fp^{(d)}/\F_\fp}(\varrho_{\phi, \fp}(\omega_\fP(u)))^{q-1}=\chi_{C, \fp}(\omega_\fP(u))^{q-1}. 
$$	
On one hand, by Lemma \ref{lemrfP}, 
\begin{align*}
\Nr_{\F_\fp^{(d)}/\F_\fp}(\varrho_{\phi, \fp}(\omega_\fP(u)))^{q-1} & = \Nr_{\F_\fp^{(d)}/\F_\fp}(\Nr_{\F_\fP/\F_\fp^{(t_\fP)}}(\bar{u}))^{-c_\fP(q-1)}\\ 
& = \Nr_{\F_\fP/\F_\fp}(\bar{u})^{-\frac{d}{t_\fP}(q-1)c_\fP}. 
\end{align*}
On the other hand, by Lemma \ref{lemchi_fP}, 
$$
\chi_{C, \fp}(\omega_\fP(u))^{q-1}=\Nr_{\F_\fP/\F_\fp}(\bar{u})^{-e_\fP(q-1)}. 
$$
Combining these two expressions, we get 
$$
\Nr_{\F_\fP/\F_\fp}(\bar{u})^{-e_\fP(q-1)} = \Nr_{\F_\fP/\F_\fp}(\bar{u})^{-\frac{d}{t_\fP}(q-1)c_\fP}  
$$
for all $\bar{u}\in \F_\fP^\times$.  Since the norm $\Nr_{\F_\fP/\F_\fp}: \F_\fP^\times\to \F_\fp^\times$ is surjective and $\F_\fp^\times$ 
is cyclic, it follows that $\frac{d}{t_\fP}(q-1)c_\fP\equiv (q-1)e_\fP \mod  (|\fp|-1)$, as was claimed. 
\end{proof}

\begin{cor}\label{cor3.10}
	Let $\fq\lhd A$ be a prime different from $\fp$. Then 
	$$
	r_{\phi, \fp}(\fP)(\fq^{-1})^{d(q-1)}\equiv \fq^{e_\fP f_\fP(q-1)} \Mod{\fp}. 
	$$
\end{cor}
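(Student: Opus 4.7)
The plan is to reduce the statement to a direct computation in $\F_\fp^\times$, combining the explicit description of $r_{\phi,\fp}(\fP)$ from Lemma \ref{lemrfP} with the congruence from Proposition \ref{prop3.10tce}.

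First, since $\fq \neq \fp$, the monic generator $\fq \in A$ is a unit in $O_\fP$, so $\fq^{-1}\in O_\fP^\times$ and Lemma \ref{lemrfP} applies to give
$$r_{\phi,\fp}(\fP)(\fq^{-1}) = \Nr_{\F_\fP/\F_\fp^{(t_\fP)}}(\overline{\fq^{-1}})^{-c_\fP} = \Nr_{\F_\fP/\F_\fp^{(t_\fP)}}(\overline{\fq})^{c_\fP}.$$
The key observation is that the reduction $\overline{\fq}$ actually lies in the subfield $\F_\fp = A/\fp$ of $\F_\fP = \F_\fp^{(f_\fP)}$. Hence the norm on $\overline{\fq}$ degenerates into a power: using $t_\fP \mid f_\fP$ we have $[\F_\fP:\F_\fp^{(t_\fP)}] = f_\fP/t_\fP$, so
$$\Nr_{\F_\fP/\F_\fp^{(t_\fP)}}(\overline{\fq}) = \overline{\fq}^{\,f_\fP/t_\fP}.$$

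Raising to the power $d(q-1)$, I obtain
$$r_{\phi,\fp}(\fP)(\fq^{-1})^{d(q-1)} = \overline{\fq}^{\,f_\fP\cdot (d/t_\fP)(q-1)c_\fP}.$$
Now Proposition \ref{prop3.10tce} gives $(d/t_\fP)(q-1)c_\fP \equiv (q-1)e_\fP \Mod{|\fp|-1}$, and since $\overline{\fq}\in \F_\fp^\times$ has order dividing $|\fp|-1$, this congruence of exponents produces an equality of powers. Consequently
$$r_{\phi,\fp}(\fP)(\fq^{-1})^{d(q-1)} = \overline{\fq}^{\,e_\fP f_\fP(q-1)} \equiv \fq^{e_\fP f_\fP(q-1)} \Mod{\fp},$$
which is the claimed identity.

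The proof is essentially a bookkeeping step, so I do not anticipate a genuine obstacle; the mild subtlety is the compatibility between the inclusions $\F_\fp \subset \F_\fp^{(t_\fP)} \subset \F_\fP$ (for evaluating the norm on an element of $\F_\fp$) and $\F_\fp \subset \F_\fp^{(t_\fP)} \subset \F_\fp^{(d)}$ (for interpreting the output of $r_{\phi,\fp}(\fP)$), which is what allows both sides of the final congruence to be compared naturally inside $\F_\fp^\times \subset (\F_\fp^{(d)})^\times$.
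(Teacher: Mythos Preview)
Your proof is correct and follows essentially the same route as the paper: apply Lemma \ref{lemrfP} to $\fq^{-1}$, use that $\bar{\fq}\in \F_\fp^\times$ so the norm $\Nr_{\F_\fP/\F_\fp^{(t_\fP)}}$ becomes the $(f_\fP/t_\fP)$-th power, and then invoke Proposition \ref{prop3.10tce} to convert the exponent.
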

\begin{proof}
	By Lemma \ref{lemrfP}, 
	$$
	r_{\phi, \fp}(\fP)(\fq^{-1})^{d(q-1)}=\Nr_{\F_\fP/\F_\fp^{(t_\fP)}}(\bar{\fq}^{-1})^{-c_\fP d(q-1)}= \Nr_{\F_\fP/\F_\fp^{(t_\fP)}}(\bar{\fq})^{c_\fP d(q-1)}.
	$$
	Since the image $\bar{\fq}$ of $\fq$ under $O_\fP^\times\to \F_\fP^\times$ lies in $\F_\fp^\times$, we have 
	$$
	\Nr_{\F_\fP/\F_\fp^{(t_\fP)}}(\bar{\fq})^{c_\fP d(q-1)}=\bar{\fq}^{c_\fP\frac{f_\fP}{t_\fP}d(q-1)}. 
	$$
	Finally, by Proposition \ref{prop3.10tce}, in $\F_\fp^\times$ we have the equality 
	$$
	\bar{\fq}^{c_\fP\frac{f_\fP}{t_\fP}d(q-1)}=\bar{\fq}^{e_\fP f_\fP (q-1)}. 
	$$
\end{proof}


\section{Drinfeld-Stuhler modules over finite fields}\label{sDSMoverFF}

 Let $\fy\lhd A$ be a prime. Let $k$ be a field 
extension of $\F_\fy$ of degree $m$. Hence $k$ is a finite field of order $q^n$, where $n=m\cdot \deg(\fy)$. 
Let $\pi=\tau^n$ be the associated Frobenius morphism. With abuse of notation, denote by $\pi$ 
also the scalar matrix $\pi I_d\in M_d(k[\tau])$. Note that $\pi$ is in the center of $M_d(k[\tau])$ 
since $\tau^n\alpha=\alpha\tau^n$ for all $\alpha\in k$. 

Let  $\gamma: A\to k$ be the composition 
$A\to A/\fy\hookrightarrow k$; in particular, $\chr_A(k)=\fy$. 
Let $\phi$ be a Drinfeld-Stuhler $O_D$-module 
defined over $k$. In this section we assume that  
 $\fy\not\in \Ram(D)$. Since $\pi$ commutes with $\phi(O_D)$, we have $\pi\in \End_k(\phi)$.

\begin{thm}\label{thm28}  Let $\tF:=F(\pi)$ be the subfield of $D':=\End_k(\phi)\otimes_A F$ generated over $F$ by $\pi$. Then: 
	\begin{enumerate}
		\item $[\tF:F]$ divides $d$. 
		\item There is a unique place $\widetilde{\infty}$ of $\tF$ over $\infty$.
		\item 
		There is a unique prime $\tilde{\fy}\neq \widetilde{\infty}$ of $\tF$ that divides $\pi$. Moreover, $\tilde{\fy}$ lies above $\fy$.  
		\item $D'$ is a central division algebra over $\tF$ of dimension $(d/[\tF:F])^2$ and with invariants 
		$$
		\inv_{\tilde{v}}(D')=\begin{cases}
		-[\tF:F]/d & \text{if $\tilde{v}=\widetilde{\infty}$,}\\
		[\tF:F]/d & \text{if $\tilde{v}=\tilde{\fy}$,} \\ 
		-[\tF_{\tilde{v}}:F_v]\cdot \inv_v(D) & \text{otherwise,}
		\end{cases}
		$$
		for each place $v$ of $F$ and each place $\tilde{v}$ of $\tF$ dividing $v$. 
			\item $\tF$ embeds into $D$. 
	\end{enumerate}
\end{thm}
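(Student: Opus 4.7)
The theorem is the Drinfeld-Stuhler analogue of the Honda--Tate structure theorem for the endomorphism algebra of an abelian variety over a finite field; the version for pure Drinfeld modules is due to Drinfeld and Gekeler, and my plan is to adapt their argument. The two main inputs are the Taguchi--Tamagawa analogue of Tate's isogeny conjecture (already invoked in Proposition \ref{lem4.5}) and a slope computation at the special places $\fy$ and $\infty$ paralleling the formal group analysis in Proposition \ref{lem:Sec2lem2.5}.

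\emph{Structure of $D'$ (parts (1), (5)).} For any prime $\fp\neq\fy$, the isomorphism \eqref{eqTfpphi} identifies $T_\fp(\phi)$ with $\cD_\fp$ as a left $O_D$-module, whose centralizer is $\cD_\fp^\opp$ acting by right multiplication. Because $\pi$ commutes with $\phi(O_D)$, this places $\pi\in \cD_\fp^\opp$ and yields an embedding $\tF\otimes_F F_\fp\hookrightarrow D_\fp^\opp$. Taguchi and Tamagawa then identify $D'\otimes_F F_\fp$ with the commutant of $\pi$ in $D_\fp^\opp$, and the double-centralizer theorem forces $Z(D')=\tF$ and makes $D'$ central simple over $\tF$. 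A nonzero element of $D'$ is, after clearing a denominator in $A$, an $O_D$-equivariant isogeny of $\phi$, hence invertible in $D'$; so $D'$ is a division algebra. The local embedding at any $\fp\notin\Ram(D)\cup\{\fy,\infty\}$ places $\tF_\fp$ inside $M_d(F_\fp)^\opp$, so $[\tF:F]$ divides $d$, proving (1). For (5), the Albert--Brauer--Hasse--Noether local-global principle (\cite[Cor.~32.16]{Reiner}) reduces the embedding $\tF\hookrightarrow D$ to the local condition that $\tF$ splits $D$ at every ramified place of $D$, which is a by-product of the invariant computation below.

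\emph{Local invariants (parts (2), (3), (4)).} At a place $\tilde{v}$ of $\tF$ above $v\notin\{\fy,\infty\}$, the completion $D'\otimes_{\tF}\tF_{\tilde{v}}$ is the centralizer of $\tF_{\tilde{v}}$ inside $D_v^\opp$; the standard formula for the invariant of the centralizer of a subfield of a local central simple algebra, combined with the base-change formula \eqref{eqChnageofinv} and $\inv(D^\opp)=-\inv(D)$, yields $\inv_{\tilde{v}}(D')=-[\tF_{\tilde{v}}:F_v]\inv_v(D)$. At the special places I reuse the formal group analysis of Proposition \ref{lem:Sec2lem2.5}: the $\fy$-divisible group $\G(\phi)$ over $k$ is isoclinic of slope $1/d$, being a formal $\cD_\fy$-module of height $d^2$ and dimension $d$ whose isotypic decomposition is forced by the local form of $\cD_\fy$; the endomorphism algebra of such a group is, by the standard Dieudonn\'e-theoretic computation, a division algebra with Hasse invariant $1/d$ at $\fy$, yielding $\inv_{\tilde{\fy}}(D')=[\tF:F]/d$ and, by isoclinicity, uniqueness of $\tilde{\fy}$. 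The argument at $\infty$ is the mirror image: the rank-$d$ lattice structure of $\phi$ at infinity (visible from the $\sD$-elliptic sheaf description of \cite{LRS}, or via the Morita reduction to a Drinfeld module of rank $d$ used in the proof of Theorem \ref{thm:main}) produces a single Newton slope $-1/d$ for $\pi$, hence $\inv_{\tinf}(D')=-[\tF:F]/d$ at a unique $\tinf$. Global reciprocity $\sum_{\tilde{v}}\inv_{\tilde{v}}(D')=0$ serves as a useful consistency check.

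\emph{Main obstacle.} The crux is the slope computation at $\tinf$ and $\tilde{\fy}$ together with the resulting isoclinicity, which is what delivers the uniqueness assertions in (2) and (3) and the anomalous values at the special places in (4). In the pure Drinfeld module setting this is a classical Newton polygon argument for the characteristic polynomial of Frobenius, but for a Drinfeld-Stuhler $O_D$-module one must either work directly with the $\cD_\fy$-module structure on the formal group in the style of \cite[Appendix]{LaumonCDV}, or reduce to a rank-$d$ Drinfeld module via Morita equivalence after extending constants so as to split $D$ (as in the proof of Theorem \ref{thm:main}), and then transfer the slope information back to $\phi$. Once the slopes are in hand, the invariant formulae and the uniqueness of the special places follow formally from the structure theory of central simple algebras.
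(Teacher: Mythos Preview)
The paper itself does not prove (1)--(4) but refers to \cite[(9.10)]{LRS} and \cite[Thm.~5.1]{PapRMS}, deriving only (5) from (4) via the local--global embedding criterion. Your sketch is therefore more ambitious, and your Honda--Tate strategy---Taguchi--Tamagawa at primes away from $\fy$ and $\infty$, a Dieudonn\'e/slope computation at $\fy$, and the Newton-polygon/lattice argument at $\infty$---is indeed the route taken in those references.

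There is, however, a genuine gap in your treatment of $\fy$. You assert that $\G(\phi)$ is a \emph{formal} $\cD_\fy$-module, isoclinic of slope $1/d$, and that this is ``forced by the local form of $\cD_\fy$''. But the standing hypothesis of this section is $\fy\notin\Ram(D)$, so $\cD_\fy\cong M_d(A_\fy)$ is a matrix algebra, not a division algebra, and imposes no connectedness constraint whatsoever. You appear to be importing the situation from Proposition~\ref{lem:Sec2lem2.5}, where the prime lies in $\Ram(D)$ and $\cD_\fp$ \emph{is} a division algebra; that is precisely the hypothesis under which \cite[Lem.~5.6]{PapRMS} forces connectedness. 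When $D=M_d(F)$, Morita equivalence reduces $\phi$ to a rank-$d$ Drinfeld module over $k$, and such a module can have any height $1\le h\le d$; for $h<d$ the $\fy$-divisible group has a nonzero \'etale quotient, so $\G(\phi)$ is not isoclinic and its connected part has slope $1/h$, not $1/d$. The same phenomenon persists for division $D$.

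The repair is as follows. Morita along $\cD_\fy\cong M_d(A_\fy)$ reduces the connected part $\G(\phi)^0$ to a one-dimensional formal $A_\fy$-module, which is automatically simple and isoclinic; its $\bar k$-endomorphism algebra is therefore a local division algebra, and the subfield generated by $\pi$ is a field, giving a \emph{single} prime $\tilde\fy$ of $\tF$ over $\fy$ at which $\pi$ has positive valuation. Primes of $\tF$ over $\fy$ at which $\pi$ is a unit (arising from the \'etale part) fall into the ``otherwise'' clause of (4), where $\inv_\fy(D)=0$ makes the formula trivial. The anomalous invariant $[\tF:F]/d$ at $\tilde\fy$ then follows either from the Dieudonn\'e computation for $\G(\phi)^0$, or---since you have already computed the invariants at $\tinf$ and at all $\tilde v\nmid\fy\infty$---simply from the reciprocity law $\sum_{\tilde v}\inv_{\tilde v}(D')=0$, which you mention only as a consistency check but which here does real work.
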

\begin{proof}
	See \cite[(9.10)]{LRS} and \cite[Thm. 5.1]{PapRMS}. Note that (5) is not explicitly stated in previous references, but it can be easily deduced 
	from (4) as follows. Indeed, (4) implies that for each place $v$ of $F$ and each place $\tilde{v}$ of $\tF$ dividing $v$, we have 
	$$
	d\frac{[\tF_{\tilde{v}}:F_v]}{[\tF:F]}\cdot \inv_v(D)\in \Z. 
	$$
	Now the fact that $\tF$ embeds into $D$ follows from a well-known characterization of commutative subfields of central simple algebras; 
	see, for example, Corollary A.3.4 in \cite{LaumonCDV}. 
\end{proof}

Let $\fl\lhd A$ be a prime different from $\fy$. By \eqref{eqTfpphi}, we have $T_\fl(\phi)\cong O_D\otimes A_\fl$. Thus, 
$T_\fl(\phi)\otimes_{A_\fl} F_\fl\cong D_\fl$.  
The action of $\rG_k$ on torsion points of $\phi$ gives an $\fl$-adic representation 
$$
i_\fl:  \rG_k \to \Aut_{O_D}(T_\fl(\phi)\otimes_{A_\fl} F_\fl) \cong D_\fl^\times. 
$$
Let $\Fr_k\in \rG_k$ be the Frobenius automorphism $\alpha\mapsto \alpha^{q^n}$ of $\bar{k}$. 
Let 
\begin{equation}\label{eqdefP(X)}
P_{\phi, k}(X)=\Nr_{D_\fl/F_\fl}(X-i_\fl(\Fr_k))
\end{equation}
be the reduced characteristic polynomial of $i_\fl(\Fr_k)$. 

\begin{prop}\label{propP=M} 
	Let $M_{\phi, k}(X)\in A[X]$ be the minimal polynomial of $\pi\in \End_k(\phi)$ over $A$. Then 
	$$P_{\phi, k}(X)=M_{\phi, k}(X)^{d/[\tF:F]}.$$ 
	In particular, $P_{\phi, k}(X)$ has coefficients in $A$ that are independent of $\fl$.
\end{prop}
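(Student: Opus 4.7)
The plan is to compute the ordinary characteristic polynomial of $\pi$, acting as an $F_\fl$-linear endomorphism of the $d^2$-dimensional $F_\fl$-vector space $V_\fl := T_\fl(\phi)\otimes_{A_\fl} F_\fl$, in two ways and compare. For the first computation, use the identification $V_\fl \cong D_\fl$ of left $D_\fl$-modules from \eqref{eqTfpphi}: under it $\pi$ acts as right multiplication by some $\beta\in D_\fl$, and the ordinary characteristic polynomial of right multiplication by $\beta$ on $D_\fl$ is the standard central-simple-algebra invariant equal to the $d$-th power of the reduced characteristic polynomial. Thus the characteristic polynomial of $\pi$ on $V_\fl$ equals $P_{\phi,k}(X)^d$.

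For the second computation, extend scalars to an algebraic closure $\overline{F_\fl}$. The action of $\End_k(\phi)$ on $T_\fl(\phi)$ extends to an injection $D'\otimes_F F_\fl \hookrightarrow \End_{D_\fl}(V_\fl) \cong D_\fl^{\opp}$ by the Tate-style isogeny theorem of Taguchi--Tamagawa for Drinfeld-Stuhler modules (\cite{Taguchi}, as invoked in the proof of Proposition \ref{lem4.5}); by Theorem \ref{thm28}(4), $D'\otimes_F \overline{F_\fl} \cong \bigoplus_\mu M_{d/[\tF:F]}(\overline{F_\fl})$ with $\mu$ ranging over the roots of $M_{\phi,k}$ in $\overline{F_\fl}$. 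The central idempotents of $\tF\otimes_F\overline{F_\fl}$ decompose $V_\fl\otimes\overline{F_\fl} = \bigoplus_\mu V_\mu$ into $\pi$-eigenspaces, and on each nonzero $V_\mu$ both $D_\fl\otimes\overline{F_\fl}\cong M_d(\overline{F_\fl})$ and its commuting $\mu$-factor $M_{d/[\tF:F]}(\overline{F_\fl})$ act faithfully. This forces $\dim V_\mu$ to be a positive multiple of $d\cdot(d/[\tF:F]) = d^2/[\tF:F]$, and since $\sum_\mu\dim V_\mu = d^2$ with exactly $[\tF:F]$ distinct $\mu$'s, each $\dim V_\mu = d^2/[\tF:F]$. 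The ordinary characteristic polynomial of $\pi$ is therefore $\prod_\mu (X-\mu)^{d^2/[\tF:F]} = M_{\phi,k}(X)^{d^2/[\tF:F]}$.

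Comparing, $P_{\phi,k}(X)^d = M_{\phi,k}(X)^{d^2/[\tF:F]} = \bigl(M_{\phi,k}(X)^{d/[\tF:F]}\bigr)^d$, so uniqueness of $d$-th roots among monic polynomials in $F_\fl[X]$ gives $P_{\phi,k}(X) = M_{\phi,k}(X)^{d/[\tF:F]}$; the $\fl$-independence and $A$-integrality claims are then automatic from $M_{\phi,k}(X)\in A[X]$. The main obstacle is ensuring that $\tF/F$ is separable, so that $\tF\otimes\overline{F_\fl}$ splits into $[\tF:F]$ distinct one-dimensional characters (and correspondingly that the minimal polynomial of $\pi$ as an operator on $V_\fl$ equals $M_{\phi,k}$, so that every root $\mu$ actually occurs as an eigenvalue). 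This should follow from semisimplicity of Frobenius on $T_\fl(\phi)$, itself a consequence of the Taguchi--Tamagawa isogeny theorem; were separability to fail, one would substitute generalized eigenspaces and run the dimension count over the distinct irreducible factors of $M_{\phi,k}$ in $F_\fl[X]$, arriving at the same final identity.
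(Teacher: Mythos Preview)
Your argument is correct and follows a genuinely different route from the paper's. The paper (adapting Gekeler's argument for Drinfeld modules) proceeds via norm compatibility rather than module decomposition: using the injection $j_\fl\colon D'\otimes_F F_\fl\hookrightarrow \End_{D_\fl}(V_\fl)\cong D_\fl$ (which, incidentally, needs only that nonzero endomorphisms have finite kernel---your invocation of Taguchi--Tamagawa here is unnecessary), one checks on maximal commutative subfields $L\subset D'$ that the reduced norm $\Nr_{D_\fl/F_\fl}$ restricted to $j_\fl(D')$ agrees with $\cN:=\Nr_{\tF/F}\circ\Nr_{D'/\tF}$. Since $\pi$ lies in the center $\tF$ of $D'$, this gives directly
\[
P_{\phi,k}(X)=\Nr_{D_\fl/F_\fl}(X-j_\fl(\pi))=\cN(X-\pi)=\Nr_{\tF/F}\bigl((X-\pi)^{d/[\tF:F]}\bigr)=M_{\phi,k}(X)^{d/[\tF:F]},
\]
with no eigenspace decomposition and no separability issue: the identity $\Nr_{\tF/F}(X-\pi)=M_{\phi,k}(X)$ holds for $\tF=F(\pi)$ regardless. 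Your approach, computing the ordinary $F_\fl$-characteristic polynomial on $V_\fl$ in two ways, is more structural and perfectly valid; the tradeoff is that the separability workaround you sketch, while it does go through, takes genuine work (one must see that each generalized eigenspace is free over the corresponding local Artinian factor of $\tF\otimes_F\overline{F_\fl}$, which follows from faithfulness of the $D'$-action together with the dimension count), and your phrasing ``distinct irreducible factors of $M_{\phi,k}$ in $F_\fl[X]$'' is slightly off---the relevant obstruction is repeated roots over $\overline{F_\fl}$, not reducibility over $F_\fl$.
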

\begin{proof} This can be proved by an argument similar to the argument in the proof of the corresponding fact 
	for Drinfeld modules; cf. \cite[Lem. 3.3]{GekelerFDM}. For the sake of completeness, we give that argument 
	in the setting of Drinfeld-Stuhler modules. 
	
	The endomorphisms of $\phi$ act on $T_\fl(\phi)$, and, by definition of endmorphisms, the actions of $\End_k(\phi)$ and $\phi(O_D)$ on  $T_\fl(\phi)$ commute with each other. Since any nonzero endomorphism has finite kernel, the 
	associated homomorphism 
	$$
	j_\fl: \End_k(\phi)\otimes A_\fl \To \End_{O_D\otimes A_\fl}(T_\fl(\phi))
	$$
	is injective. Hence we get an injective homomorphism 
	$$
	j_\fl: D'\otimes_F F_\fl\To \End_{D_\fl}(T_\fl(\phi)\otimes_{A_\fl} F_\fl)\cong D_\fl. 
	$$
	
	Let $\cN: D'\to F$ be the map obtained by compositing the reduced norm $\Nr_{D'/\tF}: D'\to \tF$ with the field 
	norm $\Nr_{\tF/F}: \tF\to F$. Let $L$ be a maximal commutative subfield of $D'$; in particular, $L$ is a field extension of $F$ 
	of degree $d$. The restriction of $\cN$ to $L$
	agrees with the field norm $\Nr_{L/F}$. On the other hand, $j_\fl(L\otimes_F F_\fl)$ is a maximal commutative 
	$F_\fl$-subalgebra of $D_\fl$ whose norm mapping to $F_\fl$ is given by the reduced norm on $D_\fl$. 
	Therefore, $\cN|_L=(\Nr_{D_\fl/F_\fl}\circ j_\fl)|_L$ for every maximal commutative subfield $L$ of $D'$, so 
	$\cN=\Nr_{D_\fl/F_\fl}\circ j_\fl$.
	
	To prove the claim of the proposition, since $\tF$ is an infinite set, it suffices to show that 
	$P_{\phi, k}(\alpha)=M_{\phi, k}(\alpha)^{d/[\tF:F]}$ for all $\alpha\in \tF$.
	The action $\Fr_k$ on $T_\fl(\phi)\otimes_{A_\fl} F_\fl$ agrees with the action of 
	the endomorphism $\pi \in \End_k(\phi)$, so we have 
	\begin{align*}
	P_{\phi, k}(\alpha) = \Nr_{D_\fl/F_\fl}(\alpha-j_\fl(\pi)) & =\Nr_{\tF/F}\circ \Nr_{D'/\tF}(\alpha-\pi) \\ 
	& =(\Nr_{\tF/F}(\alpha-\pi))^{d/[\tF:F]}\\
	& =M_{\phi, k}(\alpha)^{d/[\tF:F]},
\end{align*}
the last equality coming from $\tF=F(\pi)$. 
\end{proof}

\begin{prop}\label{propP(0)}
	The ideal generated by $P_{\phi, k}(0)$ in $A$ is $\fy^{[k:\F_\fy]}$. 
\end{prop}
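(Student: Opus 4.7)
The plan is to pin down the ideal $(P_{\phi, k}(0))$ in $A$ by computing its $\fl$-adic valuation at every prime $\fl$ of $A$ together with its $\infty$-adic absolute value, and then invoking the product formula in $F$. Since $P_{\phi, k}(0) \in A$ (by Proposition \ref{propP=M}), these local data pin down the ideal uniquely.

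For any prime $\fl \neq \fy$ of $A$, I would first show $v_\fl(P_{\phi, k}(0)) = 0$. The Frobenius $\pi = \tau^n I_d$, viewed as an endomorphism of $\gm_{a, k}^d$, has kernel $\Spec k[x_1, \dots, x_d]/(x_1^{q^n}, \dots, x_d^{q^n})$, which is purely infinitesimal. Hence $\ker \pi$ contains no nontrivial $\bar k$-rational points, so $\pi$ acts injectively, and therefore bijectively, on each finite set $\phi[\fl^n](\bar k)$. Consequently $i_\fl(\pi)$ is an automorphism of $T_\fl(\phi)$. Since $\pi$ commutes with $\cD_\fl := O_D \otimes_A A_\fl$ and \eqref{eqTfpphi} identifies $T_\fl(\phi)$ with $\cD_\fl$ as a free $\cD_\fl$-module of rank one, $i_\fl(\pi)$ corresponds to a unit of $\cD_\fl$. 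Because $\cD_\fl$ is a maximal $A_\fl$-order in $D_\fl$, its units have reduced norm in $A_\fl^\times$; thus $P_{\phi, k}(0) = (-1)^d \Nr_{D_\fl/F_\fl}(i_\fl(\pi))$ is an $\fl$-adic unit.

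Next, I would establish $|P_{\phi, k}(0)|_\infty = q^n$. The cleanest route is the ``Weil-type'' bound for Frobenius on Drinfeld-Stuhler modules, proved implicitly in \cite{LRS}: all roots of $P_{\phi, k}(X)$ in $\overline{F}$ have $\infty$-adic absolute value $q^{n/d}$. Since $P_{\phi, k}(X)$ is monic of degree $d$, it follows that $|P_{\phi, k}(0)|_\infty = (q^{n/d})^d = q^n$. Alternatively, one can apply the Tsen-theoretic argument from the proof of Theorem \ref{thm:main} to extend scalars so that $D$ splits; after a Morita reduction, $\phi$ corresponds to a Drinfeld module of rank $d$ whose Frobenius has the classical $\infty$-adic absolute value $q^{n/d}$ (cf.~\cite{GekelerFDM}). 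I expect this Weil-type estimate to be the main substantive step, since everything else is a local bookkeeping argument.

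Combining these, since $P_{\phi, k}(0)$ is an $\fl$-adic unit for every prime $\fl \neq \fy$, the ideal $(P_{\phi, k}(0))_A$ equals $\fy^c$ for some integer $c \geq 0$. Then $|P_{\phi, k}(0)|_\infty = |\fy|^c = q^{c \deg \fy}$, which combined with $|P_{\phi, k}(0)|_\infty = q^n$ forces $c = n/\deg \fy = [k:\F_\fy]$. Therefore $(P_{\phi, k}(0))_A = \fy^{[k:\F_\fy]}$, completing the argument.
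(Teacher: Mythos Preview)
Your argument is correct, and the overall architecture---pin down the $\fl$-adic valuations for $\fl\neq\fy$, then use the Riemann hypothesis at $\infty$ to determine the exponent of $\fy$---matches the paper's. The genuine difference lies in how you handle the finite primes $\fl\neq\fy$. The paper invokes the structure theorem for $\End_k(\phi)$ (Theorem~\ref{thm28}): since $P_{\phi,k}(0)$ is, up to an $\F_q^\times$-multiple, a power of $\Nr_{\tF/F}(\pi)$, and Theorem~\ref{thm28}(3) says the only finite prime of $\tF$ dividing $\pi$ lies over $\fy$, no other prime of $A$ can appear. You instead argue directly on Tate modules: the Frobenius $\pi=\tau^n I_d$ has purely infinitesimal kernel, hence acts invertibly on $T_\fl(\phi)$, hence corresponds to a unit of the maximal order $\cD_\fl$, whose reduced norm lies in $A_\fl^\times$. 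Your route is more self-contained (it does not lean on the classification of $D'$ in Theorem~\ref{thm28}), while the paper's route extracts the result from already-established structural information. For the $\infty$-adic step both proofs are essentially the same, appealing to the Riemann hypothesis $|\pi|_\infty=(\#k)^{1/d}$; the paper cites \cite[Thm.~5.6.10]{Goss} (purity of the associated Anderson motive), and also uses the uniqueness of $\tinf$ over $\infty$ to pass from $|\pi|_\infty$ to the absolute values of all roots of $P_{\phi,k}$.
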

\begin{proof}
As follows from Proposition \ref{propP=M}, 
the constant term of $P_{\phi, k}(X)$, up to an $\F_q^\times$ multiple, is equal to $\Nr_{\tF/F}(\pi)^{d/[\tF:F]}$. 
On the other hand, by Theorem \ref{thm28}, the only prime divisor of $\Nr_{\tF/F}(\pi)$ in $A$ is $\fy$. 
Thus,  $(P_{\phi, k}(0))=\fy^s$ for some $s\geq 0$. The claim that $s=[k:\F_\fy]$ follows from the following fact: 
\begin{align*}
\text{(RH) } &\text{ If $|\cdot|_\infty$ 
is the normalized absolute value of $\tF$ corresponding to $\tinf$}, \\ &\text{ then $|\pi|_\infty=(\# k)^{1/d}$.}
\end{align*}
Indeed, since $\tinf$ is the unique place of $\tF$ over $\infty$, the minimal polynomial $M_{\phi, k}(X)$ is irreducible 
over $\Fi$, so all its roots have the same absolute value; combined with (RH), this implies that 
$\deg(P_{\phi, k}(0))=\deg(\fy)\cdot [k:\F_\fy]$. Hence $s=[k:\F_\fy]$. 

(RH) is, of course, the analogue of the Riemann hypothesis for Drinfeld-Stuhler modules. 
As the Anderson motive associated to a Drinfeld-Stuhler module is pure 
of weight $1/d$, 
(RH) follows from a more general statement for Anderson $T$-modules \cite[Thm. 5.6.10]{Goss}. 
\end{proof}

\begin{cor}\label{corP(X)overF_y}
	Assume $k=\F_\fy$. Then $P_{\phi, k}(X)=M_{\phi, k}(X)$ and $\End_k(\phi)\otimes_A F=\tF$ is an 
	imaginary field extension of $F$ of degree $d$. Moreover, if we write 
	$$
	P_{\phi, k}(X)=X^d+a_1X^{d-1}+\cdots+a_d
	$$
	then $\deg(a_i)\leq i\cdot \deg(\fy)/d$ for all $1\leq i\leq d$, and $a_d=\mu\fy$ for some $\mu\in \F_q^\times$. 
\end{cor}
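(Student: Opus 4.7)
The plan is to first pin down $[\tF:F]$ using the constant-term computation, and then deduce the coefficient bounds from the Riemann-hypothesis statement (RH) that was invoked in the proof of Proposition \ref{propP(0)}.

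First I would apply Proposition \ref{propP(0)} with $k=\F_\fy$: the ideal $(P_{\phi,k}(0))\lhd A$ equals $\fy^{[k:\F_\fy]}=\fy$, so $P_{\phi,k}(0)$ generates the prime ideal $\fy$. Now Proposition \ref{propP=M} gives $P_{\phi,k}(X)=M_{\phi,k}(X)^{d/[\tF:F]}$; evaluating at $X=0$ shows $M_{\phi,k}(0)^{d/[\tF:F]}$ also generates $\fy$. Since $\fy$ is a prime (hence squarefree) ideal in the UFD $A$, this forces $d/[\tF:F]=1$, i.e.\ $[\tF:F]=d$, and $P_{\phi,k}=M_{\phi,k}$. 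Feeding $[\tF:F]=d$ into Theorem \ref{thm28}(4) yields that $D'=\End_k(\phi)\otimes_A F$ is a central division algebra over $\tF$ of dimension $(d/d)^2=1$, hence $D'=\tF$. Theorem \ref{thm28}(2) then says $\tF/F$ is imaginary.

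Next I would derive the degree bounds on $a_1,\dots,a_d$ from (RH). Since $\pi\in\End_k(\phi)$ is integral over $A$ (the endomorphism ring being a finitely generated $A$-module) and $P_{\phi,k}=M_{\phi,k}\in A[X]$, we may factor $P_{\phi,k}(X)=\prod_{j=1}^d(X-\pi_j)$ in a splitting field, where the $\pi_j$ are the $F$-conjugates of $\pi$. Because $\tinf$ is the unique place of $\tF$ above $\infty$ (Theorem \ref{thm28}(2)), every $F$-embedding $\tF\hookrightarrow\overline{F_\infty}$ yields the same normalized absolute value on $\pi$, and (RH) gives $|\pi_j|_\infty=(\#k)^{1/d}=q^{\deg(\fy)/d}$ for each $j$. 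Since $a_i\in A$ is (up to sign) the $i$-th elementary symmetric polynomial in $\pi_1,\ldots,\pi_d$, the ultrametric inequality at $\infty$ yields
$$
|a_i|_\infty\le\max_{j_1<\cdots<j_i}|\pi_{j_1}\cdots\pi_{j_i}|_\infty=q^{i\deg(\fy)/d},
$$
which, using $|a_i|_\infty=q^{\deg(a_i)}$ for $0\ne a_i\in A$, gives $\deg(a_i)\le i\deg(\fy)/d$ as required (and trivially so if $a_i=0$).

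Finally, for $a_d$ I would combine the two ingredients above. We have $a_d=(-1)^d\prod_j\pi_j$ with $|a_d|_\infty=q^{\deg(\fy)}$, so $\deg(a_d)=\deg(\fy)$, and $(a_d)=(P_{\phi,k}(0))=\fy$ by step one. Since $\fy$ is by convention the unique \emph{monic} generator of its ideal and $a_d\in A$ has the same absolute value as $\fy$, we conclude $a_d=\mu\fy$ for a unique $\mu\in\F_q^\times$. The only step that needs care is the first one (ruling out $d/[\tF:F]>1$); everything else is a direct application of Theorem \ref{thm28}, Proposition \ref{propP=M}, and the (RH) estimate already recorded in the proof of Proposition \ref{propP(0)}.
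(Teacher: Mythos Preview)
Your proposal is correct and follows essentially the same approach as the paper: use Proposition~\ref{propP(0)} together with Proposition~\ref{propP=M} to force $[\tF:F]=d$ from the irreducibility of the constant term, read off $\End_k(\phi)\otimes_A F=\tF$ from Theorem~\ref{thm28}(4), and then invoke (RH) for the coefficient bounds. You have simply expanded the steps the paper leaves implicit (in particular the elementary-symmetric-polynomial argument deducing $\deg(a_i)\le i\deg(\fy)/d$ from (RH), and the observation that $(a_d)=\fy$ already gives $a_d=\mu\fy$ without needing the absolute-value computation).
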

\begin{proof}
By Proposition \ref{propP(0)}, if $k=\F_\fy$, then the constant term of $P_{\phi, k}(X)$ is irreducible in $A$. 
Since $P_{\phi, k}(0)=M_{\phi, k}(0)^{d/[\tF:F]}$, we conclude that $[\tF:F]=d$. Now the equality $\End_k(\phi)\otimes_A F=\tF$
easily follows from Theorem \ref{thm28}. The claim about the degrees of $a_i$ is a consequence of (RH). 
\end{proof}

Since 
$P_{\phi, k}(X)$ is a polynomial with coefficients in $A$, we can reduce the coefficients $P_{\phi, k}(X)$ modulo $\fp$ for any prime $\fp\lhd A$. 

\begin{prop}\label{propP(X)cong} Assume $\inv_\fp(D)=1/d$. Then  
	$$
	P_{\phi, k}(X)\equiv \prod_{i=0}^{d-1}\left(X-\varrho_{\phi, \fp}(\Fr_k)^{|\fp|^i}\right)\mod \fp. 
	$$
\end{prop}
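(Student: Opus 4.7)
The plan is to compute $P_{\phi,k}(X)$ with the choice $\fl=\fp$ in \eqref{eqdefP(X)} and to read off its reduction modulo $\fp$ as the ordinary characteristic polynomial of the explicit matrix $\pi_{\phi,\fp}(\Fr_k)$ of \eqref{eqpi(g)}. The choice $\fl=\fp$ is legitimate since $\inv_\fp(D)=1/d\neq 0$ forces $\fp\in\Ram(D)$ while by hypothesis $\fy\notin\Ram(D)$, so $\fp\neq\fy$; and by Proposition~\ref{propP=M} the polynomial $P_{\phi,k}(X)$ has coefficients in $A$, so reduction modulo $\fp$ makes sense.

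By \eqref{eqTfpphi}, $T_\fp(\phi)\cong \cD_\fp$ is free of rank one as a left $\cD_\fp$-module, and the Galois action commutes with the $\cD_\fp$-action; fixing such an identification, $\Fr_k$ acts by right multiplication by some $\mu\in\cD_\fp^\times$, and under the isomorphism $\Aut_{O_D}(T_\fp(\phi)\otimes F_\fp)\cong D_\fp^\times$ this $\mu$ corresponds to $i_\fp(\Fr_k)$. The reduced characteristic polynomial $\Nr_{D_\fp/F_\fp}(X-\mu)$ is, via the splitting $D_\fp\otimes_{F_\fp}F_\fp^{(d)}\cong M_d(F_\fp^{(d)})$, the ordinary characteristic polynomial of the matrix representing multiplication by $\mu$ on $\cD_\fp$ viewed as a rank-$d$ module over $A_\fp^{(d)}$ in the basis $\{1,\Pi,\dots,\Pi^{d-1}\}$. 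This matrix has entries in $A_\fp^{(d)}$, and reducing the construction modulo $\fp$ yields the matrix of $\bar\mu$ on $\phi[\fp]\cong \cD_\fp/\fp\cD_\fp$, regarded as a $d$-dimensional $\F_\fp^{(d)}$-vector space via \eqref{eqphi[p]}.

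Using the presentation $\cD_\fp/\fp\cD_\fp\cong \F_\fp^{(d)}[\Pi]/\Pi^d$ from \eqref{eqcD_fp} and the commutation rule $\Pi\alpha=\alpha^{|\fp|}\Pi$, a direct calculation identifies this reduced matrix with $\pi_{\phi,\fp}(\Fr_k)$ of \eqref{eqpi(g)}: it is lower triangular with diagonal entries $\varrho_{\phi,\fp}(\Fr_k)^{|\fp|^i}$ for $i=0,\dots,d-1$. Its characteristic polynomial is therefore
$$
\prod_{i=0}^{d-1}\bigl(X-\varrho_{\phi,\fp}(\Fr_k)^{|\fp|^i}\bigr),
$$
which is Galois-invariant and so lies in $\F_\fp[X]$, giving the claim.

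The main obstacle is reconciling conventions: one has to verify that the embedding $\cD_\fp\hookrightarrow M_d(A_\fp^{(d)})$ used to define the reduced norm (say, left multiplication on $\cD_\fp$ as a right $A_\fp^{(d)}$-module) is compatible, modulo $\fp$, with the matrix presentation of $\pi_{\phi,\fp}(\Fr_k)$ in \eqref{eqpi(g)} (right multiplication on $\cD_\fp/\fp\cD_\fp$ as a left $\F_\fp^{(d)}$-module). The saving fact is that, in either convention, an elementary computation using $\Pi\alpha=\alpha^{|\fp|}\Pi$ shows that the matrix of multiplication by $\bar\mu$ is lower triangular with diagonal entries forming the full Galois orbit $\{\bar\mu_1^{|\fp|^i}\}_{i=0}^{d-1}$, where $\bar\mu_1\in\F_\fp^{(d)}$ is the image of $\mu$ in $\cD_\fp/\Pi\cD_\fp$; by definition of the canonical subgroup this image equals $\varrho_{\phi,\fp}(\Fr_k)$, so the two characteristic polynomials coincide.
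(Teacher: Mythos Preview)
Your proof is correct and follows the same route as the paper's: choose $\fl=\fp$ in \eqref{eqdefP(X)} and identify $P_{\phi,k}(X)\bmod\fp$ with the ordinary characteristic polynomial of the lower-triangular matrix $\pi_{\phi,\fp}(\Fr_k)$ from \eqref{eqpi(g)}. The paper states this identification in one line, whereas you spell out why the reduced characteristic polynomial of $i_\fp(\Fr_k)$ reduces to $\det(X-\pi_{\phi,\fp}(\Fr_k))$ and carefully address the left/right convention issue; this extra detail is sound (in particular, the reduced characteristic polynomial of $\mu$ agrees in $D_\fp$ and $D_\fp^{\opp}$, so the convention does not affect the outcome), and your observation that the diagonal entries form the full Frobenius orbit of $\varrho_{\phi,\fp}(\Fr_k)$ regardless of which canonical isogeny character is chosen is exactly what is needed.
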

\begin{proof}
	If in \eqref{eqdefP(X)} we take $\fl=\fp$, then $P_{\phi, k}(X)$ modulo $\fp$ is equal to  $\det\left(X-\pi_{\phi, \fp}(\Fr_k)\right)$, 
	where $\pi_{\phi, \fp}$ is the representation from \eqref{eqpi(g)def}. The claim now follows from \eqref{eqpi(g)}. 
\end{proof}

\section{Global points on Drinfeld-Stuhler varieties}\label{sGlobPoints}

In this section we assume that $D$ is a central division algebra over $F$ of dimension $d^2$, $d\geq 2$. 
The goal will be to 
use the canonical isogeny characters to prove that for certain degree $d$ extensions $K$ of $F$ the set of 
$K$-rational points $X^D(K)$ of the Drinfeld-Stuhler variety $X^D$ is empty. 

\subsection{Fields of moduli vs fields of definition}  
A natural approach to proving that $X^D(K)$ is empty for a specific $A$-field $K$ is to show that there are no 
Drinfeld-Stuhler $O_D$-modules defined over $K$. But a technical issue arises in this approach from the fact 
$X^D$ is only a coarse moduli scheme, so the points in $X^D(K)$ may not be represented by 
Drinfeld-Stuhler $O_D$-modules defined over $K$, i.e., the field of moduli of a Drinfeld-Stuhler $O_D$-module
might not be a field of definition. Fortunately, the results from \cite{PapikianCrelle2} and \cite{PapRMS} resolve 
this issue in certain cases.  

Let $K$ be an $A$-field such that $\chr_A(K)\not\in \Ram(D)$, i.e., either $\chr_A(K)=0$ or $\chr_A(K)$ 
is a prime of $A$ which does not ramify in $D$. 
 Let $\phi$ be a 
Drinfeld-Stuhler $O_D$-module defined $K$. The composition $\partial\circ \phi$ gives a homomorphism 
$\partial_\phi: O_D\to M_d(K)$, which extends linearly to a homomorphism 
$$
\partial_{\phi, K}: O_D\otimes_A K \to M_d(K). 
$$ 
It is not hard to show that $\partial_{\phi, K}$ is in fact an isomorphism; see \cite[Lem. 2.5]{PapRMS}. 
This implies that if a $K$-rational point on $X^D$ corresponds to a Drinfeld-Stuhler 
module defined $K$, then necessarily $O_D\otimes_A K\cong M_d(K)$. (If $\chr_A(K)=0$ this is equivalent to $D\otimes_F K\cong M_d(K)$, 
so $K$ splits $D$.) Conversely, we have the following: 

\begin{thm}\label{thmFMFD}
	Let $K$ be an $A$-field such that $\chr_A(K)\not\in \Ram(D)$. If $O_D\otimes_A K\cong M_d(K)$, then  
	a $K$-rational point on $X^D$ corresponds to a Drinfeld-Stuhler $O_D$-module defined over $K$. 
\end{thm}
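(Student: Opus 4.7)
The approach is Galois descent combined with Skolem-Noether. Let $P\in X^D(K)$ and choose a Drinfeld-Stuhler $O_D$-module $\phi: O_D\to M_d(K^\sep[\tau])$ over $K^\sep$ representing $P$; such a $\phi$ exists because $X^D$ is a coarse moduli scheme. The $K$-rationality of $P$ gives $\phi^\sigma\cong \phi$ over $K^\sep$ for every $\sigma\in \rG_K$, and the task is to find a model of $\phi$ whose image in $M_d(K^\sep[\tau])$ actually lies in $M_d(K[\tau])$.

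I would first normalize $\phi$ so that $\partial_\phi$ is defined over $K$. By Lemma 2.5 of \cite{PapRMS}, $\partial_\phi$ extends to an isomorphism $O_D\otimes_A K^\sep\xrightarrow{\sim} M_d(K^\sep)$, while the hypothesis provides a second such isomorphism by base-change of a fixed $\theta: O_D\otimes_A K\xrightarrow{\sim} M_d(K)$. Skolem-Noether supplies $g\in \GL_d(K^\sep)$ conjugating one into the other, so replacing $\phi$ by the isomorphic $g\phi g^{-1}$ arranges $\partial_\phi=\theta_{K^\sep}$. Then for each $\sigma\in \rG_K$ pick $h_\sigma\in \GL_d(K^\sep[\tau])$ with $\phi^\sigma=h_\sigma\phi h_\sigma^{-1}$; since $\partial_\phi$ is now Galois-invariant and the centralizer of $M_d(K)$ in $M_d(K^\sep)$ is $K^\sep\cdot I_d$, applying $\partial$ forces $\partial(h_\sigma)=\lambda_\sigma I_d$ for some $\lambda_\sigma\in (K^\sep)^\times$. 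By Theorem 4.1 of \cite{PapRMS}, $\Aut_{K^\sep}(\phi)\cong \F_{q^s}^\times$ for some $s\mid d$; modulo this finite cyclic group $\{\lambda_\sigma\}$ is a $1$-cocycle, so Hilbert's Theorem 90 (together with absorbing the $\F_{q^s}^\times$-ambiguity into the choice of $h_\sigma$) allows a further conjugation of $\phi$ by a scalar $\mu I_d\in \GL_d(K^\sep)$ that leaves $\partial_\phi$ unchanged and achieves $\partial(h_\sigma)=I_d$. The cocycle $\{h_\sigma\}$ now takes values in the prounipotent subgroup $U:=I_d+\tau M_d(K^\sep[\tau])$.

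The final step is to show that such a cocycle is a coboundary in $U$ modulo $\Aut_{K^\sep}(\phi)$. This follows from a degree-by-degree descent in the filtration by powers of $\tau$, using the additive Hilbert 90 statement $H^1(\rG_K,M_d(K^\sep))=0$ at each successive quotient; conjugating $\phi$ by the resulting element of $U$ yields a Galois-invariant representative, which is the sought-for Drinfeld-Stuhler $O_D$-module defined over $K$. The main obstacle is the control of the $2$-cocycle obstruction in $H^2(\rG_K,\F_{q^s}^\times)$ arising from the failure of $\{\lambda_\sigma\}$ to satisfy a strict cocycle identity, which a priori need not vanish; one must identify this class with a Brauer-type class killed by the splitting hypothesis $O_D\otimes_A K\cong M_d(K)$. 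A perhaps cleaner alternative avoids this bookkeeping by invoking the Morita equivalence for Drinfeld-Stuhler modules (Theorem 2.20 of \cite{PapRMS}): under the hypothesis the equivalence is defined over $K$ and transports the descent problem to the analogous question for rank-$d$ Drinfeld $A$-modules, where representability of the moduli after fixing the leading coefficient makes the conclusion essentially tautological.
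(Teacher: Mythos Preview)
The paper does not give a proof; it cites \cite[Cor.~6.17]{PapRMS}. Your proposal is therefore an attempt to reconstruct the argument, and the Galois-descent framework you set up is the natural one. But what you have written is an outline with an explicitly flagged gap at the decisive step, not a proof.

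The normalization of $\partial_\phi$ via Skolem--Noether is correct and is where the hypothesis first enters. The problem is the passage from $\{\lambda_\sigma\}$ to a trivialized cocycle. The isomorphisms $h_\sigma$ are well-defined only up to $\Aut_{K^\sep}(\phi)\cong\F_{q^s}^\times$, so $\{\lambda_\sigma\}$ is a cocycle only modulo that group; the obstruction to lifting it to a genuine $1$-cocycle in $(K^\sep)^\times$, and hence to killing it by Hilbert~90, is a class in $H^2(\rG_K,\F_{q^s}^\times)$. Your phrase ``absorbing the $\F_{q^s}^\times$-ambiguity into the choice of $h_\sigma$'' is precisely the assertion that this class vanishes, and that is the entire content of the theorem. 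You then say one ``must identify this class with a Brauer-type class killed by the splitting hypothesis'', which is the right intuition (it is exactly what Jordan carries out for Shimura curves in \cite{Jordan}), but you do not perform the identification. Since this is where the hypothesis $O_D\otimes_A K\cong M_d(K)$ does its real work beyond the initial normalization, omitting it leaves the argument incomplete at its core.

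The Morita-equivalence alternative does not work as stated. Theorem~2.20 of \cite{PapRMS} gives an equivalence when the maximal order \emph{is} a matrix ring $M_d(A)$; the hypothesis here only says $O_D\otimes_A K\cong M_d(K)$. The map $\phi$ is defined on $O_D$ and does not extend to a ring homomorphism on $O_D\otimes_A K$, because elements of $K$ do not commute with $\tau$ inside $M_d(K[\tau])$. Hence the matrix units in $O_D\otimes_A K$ cannot be pushed through $\phi$, and there is no Morita functor available over $K$ to transport the descent problem to rank-$d$ Drinfeld modules.
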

\begin{proof}
See	\cite[Cor. 6.17]{PapRMS}. 
\end{proof}

Theorem \ref{thmFMFD} does not rule out the possibility that $X^D(K)\neq \emptyset$ but $O_D\otimes_A K\not\cong M_d(K)$, in which  
case the $K$-rational points on $X^D$ would not correspond to Drinfeld-Stuhler $O_D$-modules defined over $K$. 
This unpleasant phenomenon does occur for Shimura curves associated with indefinite quaternion division 
algebras over $\Q$. For example, if $X^6$ denotes the Shimura curve associated to the indefinite quaternion algebra $B(6)$
over $\Q$ of discriminant $6$, then $X^6(\Q(\sqrt{-7}))\neq \emptyset$ although $\Q(\sqrt{-7})$   does not split $B(6)$; see 
Example 1.2 in \cite{Jordan}. On the other hand, for Drinfeld-Stuhler curves we have the following: 

\begin{thm}\label{thmFMFDcurves} 
	Assume $K$ is a finite extension of $F$, $d=2$, 
	and $O_D\otimes_F K\not\cong M_d(K)$. Then  
	$X^D(K)=\emptyset$. 
\end{thm}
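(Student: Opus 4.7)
The plan is to argue by contradiction. Suppose $P \in X^D(K)$, and lift $P$ to a Drinfeld-Stuhler $O_D$-module $\phi$ defined over $K^\sep$; this is possible because $X^D$ is a coarse moduli scheme and becomes a fine moduli functor (up to the $\mathbb{Z}$-action on $\sD$-elliptic sheaves) over algebraically closed fields. The $K$-rationality of $P$ translates into the statement that $[\phi^\sigma]=[\phi]$ for every $\sigma\in \rG_K$, so for each such $\sigma$ one may choose an isomorphism $u_\sigma:\phi\to \phi^\sigma$ in $\GL_2(K^\sep[\tau])$ commuting with the action of $O_D$.

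First I would identify the descent obstruction with the Brauer class of $D\otimes_F K$. Applying the derivative $\partial$ from \eqref{eq-partial} produces $v_\sigma:=\partial(u_\sigma)\in \GL_2(K^\sep)$ satisfying
$$
v_\sigma\cdot \partial_\phi(b)\cdot v_\sigma^{-1}=\sigma(\partial_\phi(b))\quad\text{for all }b\in O_D.
$$
By \cite[Lem. 2.5]{PapRMS}, the derivative extends to an isomorphism $\partial_{\phi,K^\sep}\colon O_D\otimes_F K^\sep\xrightarrow{\sim} M_2(K^\sep)$, and the intertwining relation identifies $\sigma\mapsto [v_\sigma]$ with a continuous $1$-cocycle in $Z^1(\rG_K,\PGL_2(K^\sep))$ whose image under the connecting map $H^1(\rG_K,\PGL_2)\to H^2(\rG_K,\overline{K}^\times)=\Br(K)$ is precisely the Brauer class $[D\otimes_F K]$.

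The heart of the proof is then to show that, when $d=2$, this cocycle is necessarily trivial in $\Br(K)$, contradicting the hypothesis $O_D\otimes_F K\not\cong M_2(K)$. The key extra information not yet used is that each $u_\sigma$ is an element of $\GL_2(K^\sep[\tau])$, not merely of $\GL_2(K^\sep)$; the former is a much more rigid object because invertibility forces strong constraints on the higher $\tau$-coefficients of $u_\sigma$. Exploiting this rigidity (as carried out in \cite[$\S 6$]{PapRMS} en route to Theorem~\ref{thmFMFD}) one can modify the family $\{u_\sigma\}$ by elements of $\Aut_{K^\sep}(\phi)\subseteq \F_{q^2}^\times$ so that the corresponding $v_\sigma$ becomes a coboundary $v_\sigma=M^{-1}\sigma(M)$ for some $M\in \GL_2(K^\sep)$. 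This trivializes the Brauer class of $D\otimes_F K$ and yields the contradiction.

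The main obstacle is of course the rigidity step. It is the place where the restriction to $d=2$ is essential: the counterexample $X^6(\Q(\sqrt{-7}))\neq \emptyset$ in the classical Shimura-curve setting shows that without the $\tau$-module structure, no such descent is possible even for quaternion algebras. What makes the function-field situation tractable in rank~$2$ is that the $\tau$-action propagates a single isomorphism $\partial(u_\sigma)$ of derivatives to an entire formal series, so the ambiguity in the cocycle is much smaller than in $\PGL_2(K^\sep)$. Making this precise — essentially, showing that the $\tau$-enhanced cocycle always lifts along $\GL_2\twoheadrightarrow \PGL_2$ — is the delicate point, and I would rely directly on the detailed analysis of moduli of $\sD$-elliptic sheaves in \cite[$\S 6$]{PapRMS} rather than redo it from scratch.
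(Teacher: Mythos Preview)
Your argument has a genuine gap at the ``rigidity step,'' and the reference you invoke does not establish what you need. The analysis in \cite[$\S$6]{PapRMS} that leads to Theorem~\ref{thmFMFD} proves the \emph{converse} direction: assuming the Brauer class of $D\otimes_F K$ is already trivial, one can descend $\phi$ to $K$. It does not show that the obstruction cocycle $\sigma\mapsto [v_\sigma]$ is always a coboundary when $d=2$. Indeed, the only descent-type statement in \cite{PapRMS} going in your direction (quoted in the remark immediately after Theorem~\ref{thmFMFDcurves}) requires $\gcd(d,q^d-1)=1$, which for $d=2$ forces $q$ to be even. For odd $q$ your proposed argument has no content: you have correctly identified the obstruction with $[D\otimes_F K]\in\Br(K)$, but you have given no mechanism to kill it. The vague appeal to ``$\tau$-rigidity'' does not do this---modifying $u_\sigma$ by elements of $\Aut_{K^\sep}(\phi)\subseteq \F_{q^2}^\times$ changes $v_\sigma$ only by scalars, which does nothing at the level of $\PGL_2$.

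The paper's proof is completely different and bypasses descent. It is a local argument: since $D\otimes_F K\not\cong M_2(K)$, by \eqref{eqChnageofinv} there is a prime $\fP$ of $K$ over some $\fp\in\Ram(D)$ with $[K_\fP:F_\fp]$ odd, so both $e(\fP|\fp)$ and $f(\fP|\fp)$ are odd. Then \cite[Thm.~4.1]{PapikianCrelle2} gives $X^D(K_\fP)=\emptyset$, hence $X^D(K)\subset X^D(K_\fP)=\emptyset$. What distinguishes the function-field case from the Shimura-curve example $X^6(\Q(\sqrt{-7}))$ is not the $\tau$-structure on the descent cocycle, but the availability of this local emptiness criterion.
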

\begin{proof} 
	Since $D\otimes_F K\not\cong M_d(K)$, there is at least one prime $\fP$ of $K$ lying over some $\fp\in \Ram(D)$ 
	such that the ramification index $e(\fP|\fp)$ and the residual degree $f(\fP|\fp)$ are both odd (this 
	follows from \eqref{eqChnageofinv}). Let  $K_\fP$ be the completion of $K$ at $\fP$. 
	By \cite[Thm. 4.1]{PapikianCrelle2}, we have $X^D(K_\fP)=\emptyset$. Since $X^D(K)\subset X^D(K_\fP)$, the claim follows. 
\end{proof}

\begin{rem} Another result relevant to the above discussion is the following (see \cite[Cor. 6.17]{PapRMS}): 
	Assume $d$ and $q^d-1$ are coprime, and $\chr_A(K)\not\in \Ram(D)$. If $O_D\otimes_A K\not\cong M_d(K)$, then $X^D(K)=\emptyset$. 
	Because of this fact and Theorems \ref{thmFMFD}, \ref{thmFMFDcurves}, we are inclined to believe that 
	a $K$-rational point on $X^D$ always corresponds to a Drinfeld-Stuhler $O_D$-module defined over $K$. 
\end{rem}

\subsection{Class group conditions}
Let $K/F$ be an extension of degree $d$, and $\fp\lhd A$ be a prime which remains inert in $K/F$. 
(In general, such a prime might not exist if $d>2$.) 
Denote by $\fP$ the unique place of $K$ over $\fp$, and $\infty_1, \dots, \infty_s$ the places of $K$ over $\infty$. 
For a place $v$ of $K$, denote by $K_v$ the completion of $K$ at $v$, and by $O_v$ the ring of integers of $K_v$. 
By abuse of notation, 
let $v$ also denote a uniformizer of $O_v$. Let $\Cl_K$ be the divisor class group of $K$, and 
$\Cl^{\fp\infty}_K$ be the ray divisor class group of $K$ of modulus $\fP\cdot \infty_1\cdots\infty_s$. 
Note that $\Cl_K$ is isomorphic to a direct product of a finite group and $\Z$. 
Let $\mu(K)$ be the group of roots of unity in $K$; 
these are just the nonzero elements of the algebraic closure of $\F_q$ in $K$. There is an exact sequence 
$$
1\To \mu(K)\to (O_{\fP}/\fP)^\times\times \prod_{i=1}^s(O_{\infty_i}/\infty_i)^\times\To \Cl^{\fp\infty}_K\To \Cl_K\To 1. 
$$

\begin{rem}
	If we assume $\infty$ is totally ramified in $K/F$,  
	then the previous exact sequence simplifies to 
	\begin{equation}\label{eqSimplifiedCP}
		1\To (\F_\fp^{(d)})^\times \To \Cl^{\fp\infty}_K\To \Cl_K\To 1. 
	\end{equation}
\end{rem}

Assume 
\begin{itemize}
	\item $\inv_\fp(D)=1/d$, 
	\item $D\otimes_F K\cong M_d(K)$, 
	\item $X^D(K)\neq \emptyset$. 
\end{itemize}
By Theorem \ref{thmFMFD}, there is a Drinfeld-Stuhler $O_D$-module $\phi$ defined over $K$. Let 
$\varrho_{\phi, \fp}: \rG_K\to (\F_\fp^{(d)})^\times$ be the canonical isogeny character constructed in Section \ref{sCanChar}. 
Denote by $Z(n)$ the cyclic group of order $n$. 
By Proposition \ref{lem4.5}, for a place $v\not\in \{\fP, \infty_1, \dots, \infty_s\}$ of $K$, 
the image of the inertia group $I_v$ under $\varrho_{\phi, \fp}$ has order dividing $q^d-1$. Hence 
\begin{equation}\label{eqVarRhoC}
	\varrho_{\phi, \fp}^{q^d-1}: \Gal(K^\ab/K)\to Z\left(\frac{|\fp|^d-1}{q^d-1}\right)
\end{equation}
is unramified outside of $\fP$ and $\infty_1, \dots, \infty_s$. 

\begin{lem} The homomorphism \eqref{eqVarRhoC} restricted to the inertia group at $\fP$ is surjective. 
\end{lem}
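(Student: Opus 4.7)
The plan is to reduce to a local question at $\fP$ and apply the Lubin--Tate description of the canonical subgroup from Proposition \ref{lem:Sec2lem2.5}. Since $\fp$ is inert of degree $d$ in $K/F$, the completion $K_\fP$ is the unramified extension of $F_\fp$ of degree $d$, its residue field is $\F_\fp^{(d)}$, and $\fp$ itself is a uniformizer of $K_\fP$. In particular the numerical invariants of Lemma \ref{lemrfP} are $t_\fP=d$ and $e_\fP=1$, so $r_{\phi,\fp}(\fP)(u)=\bar u^{-c_\fP}$ for some integer $c_\fP$, and the desired surjectivity of $\varrho_{\phi,\fp}^{q^d-1}|_{I_\fP}$ onto $Z((|\fp|^d-1)/(q^d-1))$ is equivalent to $\gcd(c_\fP,\,(|\fp|^d-1)/(q^d-1))=1$.

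To extract the needed information about $c_\fP$, I would invoke Proposition \ref{lem4.5} to obtain a totally tamely ramified extension $L/K_\fP$ of degree $e$ dividing $q^d-1$ over which $\phi$ acquires good reduction, and then apply Proposition \ref{lem:Sec2lem2.5} to $\phi$ viewed over $L$. Since $L\supset \F_\fp^{(d)}$ and $\gamma(\fp)$ has valuation $m=e$ in $L$, the conclusion is that $L(\cC_{\phi,\fp})/L$ is totally tamely ramified of degree $(|\fp|^d-1)/e$. Reapplying Lemma \ref{lemrfP}, now over $L$ (still in the inert case, so $t_{\fP_L}=d$), the analogous integer $c_\fP^L$ satisfies $\gcd(|\fp|^d-1,\,c_\fP^L)=e$.

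Next I would relate $c_\fP^L$ and $c_\fP$ using functoriality of local class field theory along $L/K_\fP$: the identity $\chi_L=\chi_{K_\fP}\circ \Nr_{L/K_\fP}$ between the Artin-transported characters, combined with the standard fact $\overline{\Nr_{L/K_\fP}(u)}=\bar u^{\,e}$ for $u\in O_L^\times$ (valid because $L/K_\fP$ is totally tamely ramified of degree $e$ with the same residue field), forces the congruence
\[
c_\fP^L\equiv ec_\fP\pmod{|\fp|^d-1}.
\]
Combining this congruence with $\gcd(|\fp|^d-1,\,c_\fP^L)=e$ gives $\gcd((|\fp|^d-1)/e,\,c_\fP)=1$, and because $e\mid q^d-1$ we have $(|\fp|^d-1)/(q^d-1)$ dividing $(|\fp|^d-1)/e$, whence $\gcd(c_\fP,\,(|\fp|^d-1)/(q^d-1))=1$ as required. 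Feeding this back into Lemma \ref{lemrfP} and raising to the $(q^d-1)$-th power, the image of $I_\fP$ fills out all of $Z((|\fp|^d-1)/(q^d-1))$.

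The main obstacle I anticipate is the bookkeeping in the passage from $L$ to $K_\fP$: one must verify cleanly that the ramification degree furnished by Proposition \ref{lem:Sec2lem2.5} over $L$ translates into the gcd condition $\gcd(|\fp|^d-1,\,c_\fP^L)=e$, and then that the congruence obtained from the norm functoriality genuinely implies $\gcd((|\fp|^d-1)/e,\,c_\fP)=1$ rather than a weaker divisibility statement. The tame-descent argument itself is standard, but the fact that $\phi$ need not acquire good reduction already over $K_\fP$ forces all computations to be done over $L$ first and then pulled back, which is where one has to be most careful.
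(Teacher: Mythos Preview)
Your argument is correct and rests on the same two inputs as the paper --- Proposition~\ref{lem4.5} (potentially good reduction over a totally tame extension of degree dividing $q^d-1$) and Proposition~\ref{lem:Sec2lem2.5} (the Lubin--Tate computation of $[L(\cC_{\phi,\fp}):L]$) --- but it is more labor-intensive than necessary. You pass to the \emph{minimal} tame extension $L/K_\fP$ of degree $e\mid q^d-1$ on which $\phi$ has good reduction, compute $c_\fP^L$ there, and then descend via norm functoriality and a chain of gcd manipulations. All of this checks out (including the key identity $\overline{\Nr_{L/K_\fP}(u)}=\bar u^{\,e}$ for totally tamely ramified $L/K_\fP$, and the elementary fact that $\gcd(N,cM)=M$ iff $\gcd(N/M,c)=1$ when $M\mid N$), so the concern you flag about the tame-descent bookkeeping is handled correctly.

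The paper avoids this descent entirely by taking $L/K_\fP$ to be the totally tamely ramified extension of degree \emph{exactly} $q^d-1$; since good reduction persists under further base change, Proposition~\ref{lem4.5} still guarantees $\phi$ has good reduction over this larger $L$. Now $\gamma(\fp)=\varpi^{q^d-1}$ in $L$, so Proposition~\ref{lem:Sec2lem2.5} gives $[L(\cC_{\phi,\fp}):L]=(|\fp|^d-1)/(q^d-1)$ on the nose. Because tame inertia is procyclic and $I_L$ is exactly the subgroup of $(q^d-1)$-th powers in $I_\fP$, one has $\varrho_{\phi,\fp}^{q^d-1}(I_\fP)=\varrho_{\phi,\fp}(I_L)$, which is the full group of order $(|\fp|^d-1)/(q^d-1)$. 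Choosing $e=q^d-1$ rather than the minimal $e$ collapses your norm-descent and gcd steps into a single line; otherwise the two arguments are the same. (A small stylistic note: Lemma~\ref{lemrfP} is stated for a global $K$, though its proof is purely local; invoking it ``over $L$'' is fine in content but you might prefer to phrase that step directly in terms of the tame inertia character.)
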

\begin{proof}
	Let $K_\fP$ be the completion of $K$ at $\fP$, and $L$ be the totally tamely ramified extension of $K_\fP$ 
	of degree $q^d-1$. It is enough to show that the extension $L(\cC_{\phi, \fp})/L$ is totally ramified of degree $(|\fp|^d-1)/(q^d-1)$. 
	By Proposition \ref{lem4.5}, we can assume that $\phi$ has good reduction over $L$.  
	Let $R$ be the ring of integers of $L$, and $\varpi$ be a uniformizer of $R$ such that $\gamma(\fp)=\varpi^{q^d-1}$. 
	Since the residue field at $\fP$ is $\F_\fp^{(d)}$, the field $L$ contains $\F_\fp^{(d)}$ as a subfield. 
	Now the claim follows from Proposition \ref{lem:Sec2lem2.5}. 
\end{proof}

We may view $\varrho_{\phi, \fp}^{q^d-1}$ as having source $\Cl^{\fp\infty}_K$ 
(note that the ramification at $\fP$ and $\infty_1,\dots, \infty_s$ is tame).  The previous lemma gives a surjective homomorphism
$$
\varrho_{\phi, \fp}^{q^d-1}: \Cl^{\fp\infty}_K\twoheadrightarrow Z\left(\frac{|\fp|^d-1}{q^d-1}\right)   
$$
corresponding to an abelian extension of $K$ which is totally tamely ramified at $\fP$. This extension 
is linearly disjoint from the unramified extensions of $K$ classified by the subgroups of $\Cl_K$. Therefore, applying class field theory, 
we arrive at the following:

\begin{thm}\label{thmGlobalPoints}
	Let $K/F$ be a field extension of degree $d$ which splits $D$. Assume 
	$\fp\lhd A$ is a prime which remains inert in $K/F$ and $\inv_\fp(D)=1/d$. 
	If $X^D(K)\neq \emptyset$, then there is a surjection 
	$$
	\Cl^{\fp\infty}_K\twoheadrightarrow Z\left(\frac{|\fp|^d-1}{q^d-1}\right) \times \Cl_K. 
	$$
\end{thm}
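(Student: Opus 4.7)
The plan is to assemble the ingredients already developed in Sections \ref{sPGR} and \ref{sCanChar} into a class field theory statement. Assuming $X^D(K)\neq\emptyset$, Theorem \ref{thmFMFD} (applicable because $K$ splits $D$) provides a Drinfeld-Stuhler $O_D$-module $\phi$ defined over $K$. From $\phi$ I form the canonical isogeny character $\varrho_{\phi,\fp}\colon\rG_K\to(\F_\fp^{(d)})^\times$ of Section \ref{sCanChar} and then pass to the power $\varrho_{\phi,\fp}^{q^d-1}$, whose image lands in the cyclic subgroup $Z((|\fp|^d-1)/(q^d-1))$ of $(\F_\fp^{(d)})^\times$.

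The next step is a two-sided ramification analysis. By Proposition \ref{lem4.5}, $\phi$ acquires good reduction over a totally tamely ramified extension of $K_v$ of degree dividing $q^d-1$ at every finite place $v\neq\fP$, which forces $\varrho_{\phi,\fp}^{q^d-1}$ to be trivial on the inertia groups at those places; Proposition \ref{propUnramPinf}(2) similarly handles the infinite places. Conversely, the lemma just preceding the statement shows, via Proposition \ref{lem:Sec2lem2.5} together with the observation that the residue field at the inert prime $\fP$ is already $\F_\fp^{(d)}$, that $\varrho_{\phi,\fp}^{q^d-1}$ is \emph{surjective} when restricted to the inertia at $\fP$.

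Combining the two, class field theory packages $\varrho_{\phi,\fp}^{q^d-1}$ as a surjective homomorphism
\[
\psi\colon\Cl^{\fp\infty}_K\twoheadrightarrow Z\!\left(\frac{|\fp|^d-1}{q^d-1}\right)
\]
whose kernel does not contain the image of the inertia at $\fP$. Meanwhile the natural projection $\Cl^{\fp\infty}_K\twoheadrightarrow\Cl_K$ classifies the maximal unramified abelian extension of $K$, so its kernel \emph{does} contain the image of the inertia at $\fP$ (and at every place in the modulus). The product map
\[
\Cl^{\fp\infty}_K\To Z\!\left(\tfrac{|\fp|^d-1}{q^d-1}\right)\times\Cl_K
\]
is therefore surjective: any target element can be adjusted by an inertia class at $\fP$ so as to realize its $\psi$-component while leaving its $\Cl_K$-component unchanged.

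The only real obstacle in this plan is the linear disjointness step, which amounts to showing that the image of the inertia at $\fP$ inside $\Cl^{\fp\infty}_K$ surjects under $\psi$ onto the full cyclic target. This is precisely what Proposition \ref{lem:Sec2lem2.5} buys us — the canonical subgroup $\cC_{\phi,\fp}$ cuts out a Lubin-Tate style totally ramified extension of the maximal possible degree — so, once the inertness of $\fp$ is used to place $\F_\fp^{(d)}$ inside the residue field at $\fP$, the remaining assembly is formal.
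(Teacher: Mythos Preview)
Your proposal is correct and follows essentially the same approach as the paper. One small quibble: invoking Proposition \ref{propUnramPinf}(2) to ``handle the infinite places'' is both unnecessary and slightly off --- that proposition concerns the power $\frac{|\fp|^d-1}{|\fp|-1}(q-1)$ rather than $q^d-1$ --- but since the places over $\infty$ are already part of the modulus defining $\Cl^{\fp\infty}_K$, all you need there is tame ramification, which is automatic for a character with values in a finite group of order prime to the characteristic; the paper simply notes that $\varrho_{\phi,\fp}^{q^d-1}$ is unramified outside $\{\fP,\infty_1,\dots,\infty_s\}$ and tame at those places.
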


\begin{example}\label{example5.4} Let $q=3$, $d=2$, and $K$ be the quadratic extension $F(\sqrt{\fd})$ of $F$, where  $\fd=T^{13} + 2T + 1$ 
	(this is an irreducible polynomial). We choose   
	$$\fp=T^6 + 2T^4 + T^2 + 2T + 2
	$$
	as a prime of $A$ which remains inert in $K$. 
	Since $\deg(\fd)=13$ is odd, $\infty$ ramifies in $K/F$, so we have the exact sequence \eqref{eqSimplifiedCP}. 
	Next, \texttt{Magma} computes 
	$$
	\frac{|\fp|^2-1}{q^2-1}=2\cdot 5\cdot 7\cdot 13\cdot 73,  
	$$
	\begin{center}
		\begin{tabular}{lll}
			$\Cl_K\cong \Z\times \Z/M\Z,$ & & $M=5^2\cdot 127$, \\ 
			$\Cl^{\fp\infty}_K\cong \Z\times \Z/N\Z,$ & &$N=2^4\cdot 5^3\cdot 7\cdot 13\cdot 73\cdot 127.$
		\end{tabular}
	\end{center} 
	Looking at the $5$-primary components of the groups involved, we see that $\Cl^{\fp\infty}_K$ cannot surject onto 
	$Z\left(\frac{|\fp|^2-1}{q^2-1}\right) \times \Cl_K$. By Theorem \ref{thmGlobalPoints}, if $K$ splits $D$ and $\fp\in \Ram(D)$, then 
	$X^D(K)=\emptyset$. 
\end{example}


\subsection{Congruence conditions} 
Let $K$ be an extension of $F$ of degree $d$ such that  $D\otimes_F K\cong M_d(K)$.  
Assume $\fy\lhd A$ is a prime that totally ramifies in $K$. (In general, such a prime need not exist if $d>2$.) Assume  
$\fy\not \in \Ram(D)$. 
Let $\fY$ be the prime of $K$ lying over $\fy$, so $\fY^d=\fy$. 
Assume $\fp\in \Ram(D)$ and $\inv_\fp(D)=1/d$. Due to $D\otimes_F K\cong M_d(K)$, the prime $\fp$ does not split in $K$. 
Denote by $\fP$ the unique prime of $K$ lying over $\fp$. Denote 
$$n=d\frac{|\fp|^d-1}{|\fp|-1}(q^d-1).$$

Assume there is a Drinfeld-Stuhler  $O_D$-module $\phi$ 
defined over $K$. 
Consider the character $\varrho_{\phi, \fp}^n: \rG_K\to (\F_\fp^{(d)})^\times$. 
Note that $\varrho_{\phi, \fp}^n$ takes values in $\F_\fp^\times$, and is independent of the choice 
of a canonical isogeny character.
By Proposition \ref{propUnramPinf} (or rather by its proof), $\varrho_{\phi, \fp}^n$ is unramified at $\fY$, so 
 $\varrho_{\phi, \fp}^n(\Fr_\fY^d)$ is well-defined, where $\Fr_\fY$ is a Frobenius automorphism at $\fY$. We have the following key congruence:

\begin{prop}\label{prop5.1}
	$\varrho_{\phi, \fp}^n(\Fr_\fY^d)\equiv \fy^{n}\Mod{\fp}$. 
\end{prop}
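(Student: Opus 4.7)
The plan is to apply class field theory globally. By Artin reciprocity, $\varrho_{\phi,\fp}^n$ corresponds to a continuous idele class character $\chi\colon \A_K^\times/K^\times \to (\F_\fp^{(d)})^\times$ whose local components are $\chi_v = \widetilde{r}_{\phi,\fp}(v)^n$ in the notation of \eqref{eqwidetilder}. Since $\chi$ is trivial on the diagonally embedded $K^\times$, evaluating at $\fy\in K^\times$ yields the product formula
$$\prod_v \chi_v(\fy) = 1,$$
where $v$ runs over all places of $K$. My goal is to reduce this to a two-term identity involving exactly the contributions at $\fP$ and $\fY$, then compute each.

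First I would verify that all places other than $\fP$ and $\fY$ contribute trivially. By Proposition \ref{propUnramPinf}(1), $r_{\phi,\fp}(v)^{q^d-1}=1$ for every finite place $v\nmid \fp$; since $(q^d-1)\mid n$, the character $\chi_v$ is unramified at such $v$. For finite $v\neq \fP,\fY$, the element $\fy$ lies in $O_v^\times$, hence $\chi_v(\fy)=1$. For $v\mid\infty$, Proposition \ref{propUnramPinf}(2) combined with the divisibility $\frac{|\fp|^d-1}{|\fp|-1}(q-1)\mid n$ shows that $\chi_v$ is trivial on all of $K_v^\times$, so again $\chi_v(\fy)=1$. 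The product formula then collapses to $\chi_\fP(\fy)\cdot\chi_\fY(\fy)=1$.

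Next I would compute each surviving local factor. Since $\fy$ totally ramifies in $K$, I can write $\fy = \pi_\fY^d u$ for a uniformizer $\pi_\fY$ of $K_\fY$ and some $u\in O_\fY^\times$; the unit contribution vanishes by the unramifiedness of $\chi_\fY$ noted above, so
$$\chi_\fY(\fy)=\widetilde{r}_{\phi,\fp}(\fY)^n(\pi_\fY)^d=\varrho_{\phi,\fp}^n(\Fr_\fY^d),$$
using that the local Artin map sends $\pi_\fY$ to a Frobenius element modulo inertia. At $\fP$, the hypothesis $D\otimes_F K\cong M_d(K)$ combined with $\inv_\fp(D)=1/d$ forces $[K_\fP:F_\fp]=d$, hence $e_\fP f_\fP=d$. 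Applying Corollary \ref{cor3.10} with $\fq=\fy$ then gives
$$r_{\phi,\fp}(\fP)(\fy^{-1})^{d(q-1)}\equiv \fy^{d(q-1)}\Mod{\fp}.$$
The exponent $n/(d(q-1))=\frac{|\fp|^d-1}{|\fp|-1}\cdot\frac{q^d-1}{q-1}$ is a positive integer, so raising this to that power and inverting yields $\chi_\fP(\fy)\equiv \fy^{-n}\Mod{\fp}$.

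Substituting the two factors into $\chi_\fP(\fy)\,\chi_\fY(\fy)=1$ produces $\varrho_{\phi,\fp}^n(\Fr_\fY^d)\equiv \fy^n\Mod{\fp}$, as required. The conceptual heart of the argument is the reduction to a two-term product formula, while the main technical step is the exponent bookkeeping in the final use of Corollary \ref{cor3.10}; I expect this last step to be the main obstacle, but the specific value of $n$ in the statement is evidently engineered so that all residual ramification and infinity-contributions cancel cleanly.
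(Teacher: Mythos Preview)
Your proof is correct and follows essentially the same approach as the paper's own proof: both interpret $\varrho_{\phi,\fp}^n$ as an id\`ele class character, use triviality on the principal id\`ele $\fy\in K^\times$, invoke Proposition~\ref{propUnramPinf} to eliminate all local contributions except at $\fP$ and $\fY$, and then apply Corollary~\ref{cor3.10} at $\fP$ (with $e_\fP f_\fP=d$) to finish. The only difference is organizational---the paper starts from the id\`ele $((\varpi_\fY^d)_\fY,(1)^\fY)$ representing $\Fr_\fY^d$ and divides by the principal id\`ele $\fy$, whereas you evaluate the product formula at $\fy$ directly---but the underlying computation is identical.
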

\begin{proof}
By class field theory, one can consider $\varrho_{\phi, \fp}^n$	as a character of the id\`ele class group $\A_K^\times/K^\times$ of $K$.  
Then 
$$
\varrho_{\phi, \fp}^n(\Fr_\fY^d)=\varrho_{\phi, \fp}^n((\varpi_{\fY}^d)_\fY, (1)^{\fY})=\varrho_{\phi, \fp}^n((u \fy)_\fY, (1)^{\fY})
=\varrho_{\phi, \fp}^n((u)_\fY, (\fy^{-1})^{\fY}),  
$$
where $\varpi_{\fY}$ is a uniformizer of $K_\fY$; $u$ is a unit in $O_\fY$; 
$((\varpi_{\fY}^d)_\fY, (1)^{\fY})$ is the id\`ele of $K$ where the component at the place $\fY$ is $\varpi_{\fY}^d$ and $1$ at all 
other places; $((u)_\fY, (\fy^{-1})^{\fY})$ is the id\`ele where the component at the place $\fY$ is $u$ and $\fy^{-1}$ at all 
other places. Now 
$$
\varrho_{\phi, \fp}^n((u)_\fY, (\fy^{-1})^{\fY})={r}_{\phi, \fp}(\fY)^n(u) \cdot \prod_{v\neq \fY}\widetilde{r}_{\phi, \fp}(v)^n(\fy^{-1}),
$$
where $\widetilde{r}_{\phi, \fp}(v)$ is the character defined in \eqref{eqwidetilder}, and ${r}_{\phi, \fp}(v)$ 
is its restriction to $O_v^\times$. Note that at all places $v$ of $K$ that 
do not divide $\fy$ or $\infty$, the element $\fy^{-1}$ is a unit. Hence by Proposition \ref{propUnramPinf} we have 
$$
{r}_{\phi, \fp}(\fY)^n(u)\cdot \prod_{v\neq \fY}\widetilde{r}_{\phi, \fp}(v)^n(\fy^{-1})=r_{\phi, \fp}(\fP)^n(\fy^{-1}). 
$$
Finally, since $e_\fP f_\fP=[K:F]=d$, Corollary \ref{cor3.10} implies  
$$
r_{\phi, \fp}(\fP)^n(\fy^{-1})\equiv \fy^{e_\fP f_\fP\frac{n}{d}}=\fy^{d\frac{n}{d}}= \fy^n\Mod{\fp}. 
$$
\end{proof}

\begin{defn}
	Let $\cW(\fy)$ be the set of elements $\pi$ of $\overline{F}$ such that 
	\begin{enumerate}
		\item $\pi$ is integral over $A$. 
		\item $[F(\pi):F]=d$. 
		\item There is only one place $\widetilde{\infty}$ of $F(\pi)$ lying over $\infty$. 
		\item There is a unique prime $\tilde{\fy}\neq \widetilde{\infty}$ of $F(\pi)$ that divides $\pi$. This prime lies above $\fy$. 
		\item $|\pi|_\infty=|\fy|^{1/d}$, where $|\cdot|_\infty$ is the unique extension to $F(\pi)$ of the 
		normalized absolute value of $F$ corresponding to $\infty$. 
	\end{enumerate}
\end{defn}

Note that if $\pi\in \cW(\fy)$, then the minimal polynomial of $\pi$ 
$$
M_\pi(X)=X^d+a_{1}X^{d-1}+\cdots+a_d
$$
has the following properties 
\begin{enumerate}
	\item $a_i\in A$ and $\deg(a_i)\leq i \cdot \deg(\fy)/d$ for all $1\leq i\leq d$. 
	\item $a_d=\mu \fy$ for some $\mu\in \F_q^\times$. 
\end{enumerate}
In particular, $\cW(\fy)$ is a finite set. 

For $s\geq 1$, let $n'=d\frac{q^{sd}-1}{q^s-1}(q^d-1)$ and 
$$
\cD'(\fy, s)=\left\{\Nr_{F(\pi)/F}(\pi^{dn'}-\fy^{n'}) \mid \pi\in \cW(\fy)\right\}. 
$$
Let $\cP'(\fy, s)$ be the set of prime divisors of nonzero elements of $\cD'(\fy, s)$. 

 \begin{thm}\label{thmMain1}
	Let $K/F$ be a field extension of degree $d$. Assume
		\begin{itemize}
		\item $D\otimes K\cong M_d(K)$, 
		\item $\fy\lhd A$ is a prime that totally ramifies in $K$, 
		\item $\fy\not\in \Ram(D)$, 
		\item $\inv_\fp(D)=1/d$, 
		\item $\fp\not\in \cP'(\fy, \deg(\fp))$, 
		\item $D\otimes_F F(\sqrt[d]{\mu \fy})\not\cong M_d(F(\sqrt[d]{\mu \fy}))$ for any $\mu\in \F_q^\times$. 
	\end{itemize}
	Then $X^D(K)=\emptyset$. 
\end{thm}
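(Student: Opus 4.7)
The plan is to argue by contradiction. Suppose $X^D(K)\neq\emptyset$. Since $D\otimes_F K\cong M_d(K)$, Theorem \ref{thmFMFD} promotes any $K$-rational point to an actual Drinfeld-Stuhler $O_D$-module $\phi$ defined over $K$. The idea is to reduce $\phi$ at the prime $\fY$ of $K$ above $\fy$, view the Frobenius of the reduction as an element of $\cW(\fy)$, and exploit the canonical isogeny character to derive a congruence incompatible with either the fifth or the sixth hypothesis.

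I would first produce the reduction. Because $\fy$ totally ramifies in $K/F$, the residue field at $\fY$ is $\F_\fy$, and by Proposition \ref{lem4.5}, after a totally tame extension $L/K_\fY$ (whose residue field remains $\F_\fy$), $\phi$ acquires good reduction $\bar\phi$ over $\F_\fy$; this is legitimate because $\fy\notin\Ram(D)$. Setting $\pi=\Fr_{\F_\fy}\in\End_{\F_\fy}(\bar\phi)$, Corollary \ref{corP(X)overF_y} gives $F(\pi)$ as an imaginary extension of $F$ of degree $d$, with reduced characteristic polynomial $M_\pi(X)=P_{\bar\phi,\F_\fy}(X)\in A[X]$ whose constant term equals $\mu\fy$ for some $\mu\in\F_q^\times$; Theorem \ref{thm28} then places $\pi$ in $\cW(\fy)$.

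Next I would extract the key congruence. Identifying $\Fr_\fY\in\rG_K$ with the residue-field Frobenius of $\bar\phi$ via reduction, Proposition \ref{propP(X)cong} yields
$$M_\pi(X)\equiv\prod_{i=0}^{d-1}\bigl(X-\varrho_{\phi,\fp}(\Fr_\fY)^{|\fp|^{i}}\bigr)\Mod{\fp},$$
so there exist a prime $\fP$ of $F(\pi)$ above $\fp$ and some $0\le i\le d-1$ with $\pi\equiv\varrho_{\phi,\fp}(\Fr_\fY)^{|\fp|^i}\Mod{\fP}$. Setting $n=d\,\frac{|\fp|^d-1}{|\fp|-1}(q^d-1)$, which, since $s=\deg(\fp)$, matches the exponent $n'$ appearing in the definition of $\cD'(\fy,\deg\fp)$, I would combine the previous congruence with Proposition \ref{prop5.1} (which gives $\varrho_{\phi,\fp}(\Fr_\fY)^{dn}\equiv\fy^n\Mod{\fp}$) and the identity $\fy^{|\fp|}\equiv\fy\Mod{\fp}$ to obtain
$$\pi^{dn}\equiv\varrho_{\phi,\fp}(\Fr_\fY)^{|\fp|^i dn}\equiv\fy^{|\fp|^i n}\equiv\fy^n\Mod{\fP}.$$

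Finally I would split into two cases. If $\pi^{dn}\neq\fy^n$ in $F(\pi)$, then $\Nr_{F(\pi)/F}(\pi^{dn}-\fy^n)$ is a nonzero element of $A$ divisible by $\fp$, and since $\pi\in\cW(\fy)$ it lies in $\cD'(\fy,\deg\fp)$; hence $\fp\in\cP'(\fy,\deg\fp)$, contradicting the fifth hypothesis. If $\pi^{dn}=\fy^n$, then $\pi^d=\zeta\fy$ for some root of unity $\zeta\in F(\pi)$; comparing $\tilde\fy$-adic valuations gives $d\,v_{\tilde\fy}(\pi)=v_{\tilde\fy}(\fy)=e(\tilde\fy/\fy)\le d$, forcing $e(\tilde\fy/\fy)=d$, so $F(\pi)/F$ is totally ramified at $\fy$ and therefore contains no nontrivial constant field extension. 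This compels $\zeta\in\F_q^\times$ and $F(\pi)=F(\sqrt[d]{\zeta\fy})$; by Theorem \ref{thm28}(5), $F(\pi)$ embeds into $D$, so $D\otimes_F F(\sqrt[d]{\zeta\fy})\cong M_d(F(\sqrt[d]{\zeta\fy}))$, contradicting the sixth hypothesis. The main technical obstacle is the alignment in the third step: one must carefully identify the canonical character $\varrho_{\phi,\fp}$ of $\phi$ over $K$ with the Frobenius action on the reduction $\bar\phi$ over $\F_\fy$, so that Propositions \ref{prop5.1} and \ref{propP(X)cong} apply to the common element $\Fr_\fY$ and fuse into the single congruence $\pi^{dn}\equiv\fy^n\Mod{\fP}$.
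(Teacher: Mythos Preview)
Your argument is correct and follows the same overall strategy as the paper: assume a $K$-point exists, promote it to a module $\phi$ via Theorem~\ref{thmFMFD}, reduce at $\fY$ using Proposition~\ref{lem4.5}, combine Proposition~\ref{propP(X)cong} with Proposition~\ref{prop5.1} to force $\pi^{dn}\equiv\fy^n$ modulo a prime above $\fp$, and then split into the two cases.

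There are two tactical differences worth noting. First, the paper applies Proposition~\ref{propP(X)cong} to $\bar\phi$ over the extension $\F_\fy^{(dn)}$ rather than over $\F_\fy$; since $\varrho_{\bar\phi,\fp}^{n}$ takes values in $\F_\fp^\times$, all the roots of $P_{\bar\phi,\F_\fy^{(dn)}}(X)$ modulo $\fp$ collapse to the single value $\fy^n$, which avoids your detour through the exponent $|\fp|^{i}$ and the Fermat step $\fy^{|\fp|}\equiv\fy\Mod\fp$. Second, in the degenerate case $\pi^{dn}=\fy^n$ the paper invokes supersingularity (via \cite[Prop.~5.3]{PapRMS}) and a Newton polygon argument to conclude that $P_{\bar\phi,\F_\fy}(X)=X^d-\mu\fy$; your valuation argument showing directly that $e(\tilde\fy/\fy)=d$, hence that the constant field of $F(\pi)$ is $\F_q$ and $\zeta\in\F_q^\times$, is more self-contained and bypasses that external reference. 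Both variants reach the same contradiction with the last hypothesis via Theorem~\ref{thm28}(5).
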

\begin{proof}
	Suppose $X^D(K)\neq\emptyset$. Since $D\otimes K\cong M_d(K)$, Theorem \ref{thmFMFD} implies that there 
	exists a Drinfeld-Stuhler $O_D$-module $\phi$ defined over $K$.  
	Consider $\phi$ over $K_\fY$. By Proposition \ref{lem4.5}, $\phi$ has good reduction over the totally tamely ramified extension $L$ of $K_\fY$ 
	of degree $q^d-1$. Denote by $\bar{\phi}$ the reduction of $\phi$ modulo the 
	maximal ideal of $L$. Note that the residue field of $L$ is $\F_\fy$. 
	By Theorem \ref{thm28} and (RH) in the proof of Proposition \ref{propP(0)}, the roots of $P_{\bar{\phi}, \F_\fy}(X)$ 
	are in $\cW(\fy)$. If we decompose 
	$$
	P_{\bar{\phi}, \F_\fy}(X)=\prod_{i=1}^d(X-\pi_i)
	$$
	over $\overline{F}$, then from \eqref{eqdefP(X)} it is easy to see that 
	$$
	P_{\bar{\phi}, \F_\fy^{(dn)}}(X)=\prod_{i=1}^d(X-\pi_i^{dn}). 
	$$
	On the other hand, by Proposition \ref{propP(X)cong} 
	$$
	P_{\bar{\phi}, \F_\fy^{(dn)}}(X)\equiv \prod_{i=0}^{d-1}\left(X-\varrho_{\bar{\phi}, \fp}(\Fr_{\F_\fy}^{dn})^{|\fp|^i}\right)=
	\prod_{i=0}^{d-1}\left(X-\varrho_{\bar{\phi}, \fp}(\Fr_{\F_\fy}^{dn})\right)\mod{\fp}, 
	$$
	where the second equality follows from the fact that $\varrho_{\bar{\phi}, \fp}^n$ takes values in $\F_\fp^\times$. 
	
	Since $L$ is totally ramified over $F_\fy$, we have $\varrho_{\phi, \fp}(\Fr_\fY^{nd})= \varrho_{\bar{\phi}, \fp}(\Fr_{\F_\fy}^{nd})$. 
	By Proposition \ref{prop5.1}, 
	$$\varrho_{\phi, \fp}(\Fr_\fY^{nd})\equiv \fy^{n}\Mod{\fp}.$$ 
	Thus, 
	$$
	\prod_{i=1}^d(X-\pi_i^{dn})\equiv \prod_{i=1}^d(X-\fy^n) \mod{\fp}. 
	$$
	Since $\F_\fp$ is a field, this congruence implies $\pi_i^{dn}\equiv \fy^n\Mod{\fp}$ for all $1\leq i\leq d$, so 
	$\fp$ divides $\Nr_{F(\pi_i)/F}(\pi_i^{dn}- \fy^n)$. This contradicts the assumption $\fp\not\in \cP'(\fy, \deg(\fp))$, unless $\pi_i^{dn}=\fy^n$ 
	for all $i$. On the other hand, if $\pi_i^{dn}=\fy^n$, then $\bar{\phi}$ is supersingular and there is a unique place of $\tF$ 
	lying over $\fy$; see \cite[Prop. 5.3]{PapRMS}. By Corollary \ref{corP(X)overF_y}, the polynomial $P_{\bar{\phi}, \F_\fy}(X)$ 
	is irreducible over $F$, and, since only one place of $\tF$ lies over $\fy$, it  
	remains irreducible over the completion $F_{\fy}$ of $F$ at $\fy$. Hence the Newton polygon of $P_{\bar{\phi}, \F_\fy}(X)$  
	over $F_{\fy}$ must have only one slope.  Let $P_{\bar{\phi}, \F_\fy}(X)=X^d+a_1X^{d-1}+\cdots+a_d$. 
	By Corollary \ref{corP(X)overF_y}, $a_d=-\mu\fy$ for some $\mu\in \F_q^\times$ and any other coefficient $a_i$ is not divisible 
	by $\fy$, unless it is zero. If $a_i\neq 0$ for some $1\leq i\leq d-1$, then the Newton polygon of $P_{\bar{\phi}, \F_\fy}(X)$ 
	has two slopes, contradicting our earlier conclusion.  
	Thus, $P_{\bar{\phi}, \F_\fy}(X)=X^d-\mu\fy$ for some $\mu\in \F_q^\times$. But now, in the notation of Theorem 
	\ref{thm28}, the field $\tF=F(\sqrt[d]{\mu\fy})$ embeds into $D$. 
	We conclude that 
	in our case $\tF$ is a maximal commutative subfield of $D$, so $\tF$ splits $D$.
	This contradicts one of our assumptions. Therefore $X^D(K)=\emptyset$. 
\end{proof}

\begin{example}
	Let $q=3$, $d=2$, $s=3$, and $\fy=T$. A computer calculation using \texttt{Magma} shows that 
	$T^3 + T^2 + 2$ and $T^3 + 2T^2 + 1$ are not in $\cP'(\fy, 3)$. Let $\fp$ be one of these primes. Let $\fq$ 
	be a prime different from $\fp$ which splits in both $F(\sqrt{T})$ and $F(\sqrt{-T})$. Let $D$ 
	be the quaternion division algebra over $F$ with $\Ram(D)=\{\fp, \fq\}$.  Let $K=F(\sqrt{T\fm})$, where $\fm$ 
	is square-free and $T\nmid \fm$. If $\fm$ is chosen so that $\Leg{T\fm}{\fp}\neq 1$ and $\Leg{T\fm}{\fq}\neq 1$, then 
	$K$ splits $D$. For example, one can take $\fq=T^2+1$ and $\fm=\fp\fq$. Therefore, by Theorem \ref{thmMain1}, $X^D(K)=\emptyset$. 
\end{example}

Let $m=d(q^d-1)$. Fix a canonical isogeny claracter $\varrho_{\phi, \fp}$. Denote by $\bar{\fy}$ the image of $\fy$ under the reduction 
homomorphism $A\to \F_\fp$. By Proposition \ref{prop5.1}, 
$$
\eps:=\varrho_{\phi, \fp}(\Fr_\fY^{d m})/\bar{\fy}^{m}\in \F_\fp^{(d)}
$$
satisfies 
$$
\Nr_{\F_\fp^{(d)}/\F_\fp}(\eps)=\eps^{1+|\fp|+\cdots+|\fp|^{d-1}}=1. 
$$
As in the proof of Theorem \ref{thmMain1} one deduces that 
$$
P_{\bar{\phi}, \F_\fy^{(dm)}}(X)\mod{\fp}=\prod_{i=0}^{d-1}\left(X-\varrho_{\phi, \fp}(\Fr_\fY^{dm})^{|\fp|^i}\right)=\prod_{i=0}^{d-1}\left(X-\eps^{|\fp|^i}\bar{\fy}^m\right)\in \F_\fp[X]
$$
is independent of the choice of a canonical isogeny claracter. We use this expression to strengthen Theorem \ref{thmMain1} when $d=2$. 
This will give us a congruence condition which is more amenable to explicit verification, and thus construction of explicit examples. 

Until the end of this section assume $d=2$. For $r\geq 1$ denote  
$$
P_{\bar{\phi}, \F_\fy^{(r)}}(X)=X^2-a(r)X+b(r)\in A[X]. 
$$ 
For $\pi\in \cW(\fy)$, let $\pi'$ be its conjugate over $F$ (keep in mind that $[F(\pi):F]=d=2$). Define 
$$
\cC(\fy)=\{\pi^{dm}+(\pi')^{dm}\mid \pi\in \cW(\fy)\}. 
$$
Note that $\cC(\fy)$ is a finite subset of $A$, and $a(dm)\in \cC(\fy)$. For $s\geq 1$, let 
$$
\cD(\fy, s)=\left\{ \Nr_{\F_{q^s}(T)/F}\left(c-(\eps+\eps^{-1})\fy^m\right)\mid c\in \cC(\fy), \eps\in \F_{q^{2s}}, \eps^{1+q^s}=1 \right\}. 
$$
Note that $\eps+\eps^{-1}=\eps+\eps^{q^s}=\Tr_{\F_{q^{2s}}/\F_{q^s}}(\eps)\in \F_{q^s}$, so the norm above makes sense. 
$\cD(\fy, s)$ is again a finite subset of $A$. Let $\cP(\fy, s)$ be the set of prime divisors of nonzero elements of $\cD(\fy, s)$. 

Now let $s=\deg(\fp)$ and identify $\F_{q^{2s}}$ with $\F_\fp^{(d)}$. From our earlier discussion, 
$$
a(dm)\equiv (\eps+\eps^{-1}) \bar{\fy}^m\mod{\fp}. 
$$
This implies that either $\fp\in \cP(\fy, s)$ or $a(dm)= (\eps+\eps^{-1}) \fy^m$. In the second case, $\fy$ divides $a(dm)$. 

\begin{lem}\label{lem5.4}
	$\fy$ divides $a(dm)$ if and only if $a(1)=0$. 
\end{lem}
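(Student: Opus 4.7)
My plan is to work with the roots $\pi, \pi' \in \overline{F}$ of $P_{\bar{\phi}, \F_\fy}(X) = X^2 - a(1)X + b(1)$. By Corollary \ref{corP(X)overF_y} we have $b(1) = \mu\fy$ for some $\mu \in \F_q^\times$, so $\pi \in \cW(\fy)$ and
\[ a(1) = \pi + \pi', \qquad \pi\pi' = \mu\fy, \qquad a(r) = \pi^r + (\pi')^r \text{ for every } r \geq 1. \]
Throughout I will exploit the fact that $dm = 2d(q^d-1)$ is even.

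For the forward implication, I would observe that if $a(1) = 0$ then $\pi' = -\pi$ and hence $\pi^2 = -\mu\fy$. Since $dm$ is even, this gives
\[ a(dm) = \pi^{dm} + (-\pi)^{dm} = 2\pi^{dm} = 2(-\mu)^{dm/2}\fy^{dm/2}, \]
which is manifestly divisible by $\fy$ (and equals $0$ outright in characteristic $2$). This direction is essentially immediate.

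The converse is where the real work lies. I would assume $\fy \mid a(dm)$ and analyze the decomposition of $\fy$ in the quadratic extension $F(\pi)/F$. The relation $\pi\pi' = \mu\fy$ combined with Galois invariance of valuations at a unique prime rules out the inert case (it would force both $\pi$ and $\pi'$ to have the same positive valuation at the unique prime over $\fy$, contradicting $v_{\tilde{\fy}}(\mu\fy) = 1$), so $\fy$ must either split as $\tilde{\fy}_1\tilde{\fy}_2$ or ramify as $\tilde{\fy}^2$. In the split case, I may arrange $v_{\tilde{\fy}_1}(\pi) = 1$ and $v_{\tilde{\fy}_1}(\pi') = 0$; then the two summands of $a(dm)$ have distinct $\tilde{\fy}_1$-valuations $dm$ and $0$, so $v_\fy(a(dm)) = 0$, contradicting $\fy \mid a(dm)$. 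Only the ramified case survives, and there $v_{\tilde{\fy}}(\pi) = v_{\tilde{\fy}}(\pi') = 1$, which forces $v_{\tilde{\fy}}(a(1)) \geq 1$, i.e.\ $\fy \mid a(1)$. The final step is to invoke the degree bound $\deg a(1) \leq \deg(\fy)/2$ from Corollary \ref{corP(X)overF_y}: no nonzero element of $A$ of degree strictly less than $\deg\fy$ can be divisible by $\fy$, so $a(1) = 0$. The main obstacle is the case analysis of the three possible splitting behaviors of $\fy$; once the inert and split cases are eliminated by elementary valuation counting, the rest reduces to the routine degree estimate from Corollary \ref{corP(X)overF_y}.
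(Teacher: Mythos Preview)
Your proof is correct, but the paper takes a shorter, purely algebraic route that avoids any splitting analysis. The paper observes that
\[
a(1)^r=(\pi+\pi')^r=\pi^r+(\pi')^r+\pi\pi'\cdot c=a(r)+\mu\fy\cdot c
\]
for some $c\in A$ (the cross terms in the expansion are all divisible by $\pi\pi'=\mu\fy$, and the left-hand side minus $a(r)$ is symmetric in $\pi,\pi'$, hence lies in $A$). Thus $\fy\mid a(r)$ if and only if $\fy\mid a(1)^r$ if and only if $\fy\mid a(1)$, for \emph{every} $r\geq 1$; the degree bound $\deg a(1)\leq \deg(\fy)/2$ then finishes exactly as in your last step. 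No case split, and the parity of $r$ never enters.

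Your valuation-theoretic argument is a genuinely different and more structural proof: it identifies $a(1)=0$ with the ramified case for $\fy$ in $F(\pi)$, and rules out the other cases by direct valuation counts. One small point worth making explicit: in the ramified case you jump from $v_{\tilde\fy}(a(1))\geq 1$ to $\fy\mid a(1)$. This needs the observation that since $a(1)\in A$ and $e(\tilde\fy\mid\fy)=2$, the valuation $v_{\tilde\fy}(a(1))=2v_\fy(a(1))$ is even, so $v_{\tilde\fy}(a(1))\geq 1$ forces $v_{\tilde\fy}(a(1))\geq 2$ and hence $v_\fy(a(1))\geq 1$.
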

\begin{proof}
	Write $a(1)=\pi+\pi'$ for some $\pi\in \cW(\fy)$. Then $a(r)=\pi^r+(\pi')^r$ for all $r\geq 1$. Since 
	$$
	a(1)^r=(\pi+\pi')^r=\pi^r+(\pi')^r+\pi\pi'c= \pi^r+(\pi')^r + \fy \mu c=a(r)+\fy \mu  c
	$$
	for some $c\in A$ and $\mu\in \F_q^\times$, we see that $\fy$ divides $a(r)$ if and only if $\fy$ divides $a(1)$. 
	On the other hand, $\deg(a(1))\leq \deg(\fy)/2$, so $\fy$ divides $a(1)$ if and only if $a(1)=0$. 
\end{proof}

By Lemma \ref{lem5.4}, if $\fy$ divides $a(dm)$ then $a(1)=0$. But if $a(1)=0$, then the minimal polynomial of 
the Frobenius endomorphism of $\bar{\phi}$ over $\F_\fy$ is $X^2-\mu\fy$ for some $\mu\in \F_q^\times$. 
Thus,  $\tF=F(\sqrt{\mu \fy})$. By Theorem \ref{thm28}, $\tF$ embeds into $D$, so 
$\tF$ splits $D$. Thus, we proved the following: 

\begin{thm}\label{thmMain2} Let $d=2$ and $K/F$ be a quadratic extension. Assume 
	\begin{itemize}
		\item $D\otimes K\cong M_2(K)$, 
		\item $\fy\lhd A$ is a prime that ramifies in $K$, 
		\item $\fy\not\in \Ram(D)$, 
		\item $\fp\in \Ram(D)$, 
		\item $\fp\not\in \cP(\fy, \deg(\fp))$, 
		\item $D\otimes_F F(\sqrt{\mu \fy})\not\cong M_2(F(\sqrt{\mu \fy}))$ for any $\mu\in \F_q^\times$. 
		\end{itemize}
	Then $X^D(K)=\emptyset$. 
\end{thm}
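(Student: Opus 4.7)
The plan is to argue by contradiction, assembling the pieces already in place. Suppose $X^D(K) \neq \emptyset$. Since $D \otimes_F K \cong M_2(K)$, Theorem \ref{thmFMFD} gives a Drinfeld-Stuhler $O_D$-module $\phi$ defined over $K$. Because $\fy$ ramifies in $K/F$ and $[K:F] = 2$, it is totally ramified; let $\fY$ be the unique prime of $K$ above $\fy$. Since $\fy \notin \Ram(D)$, we may view $\phi$ over $K_\fY$ and invoke Proposition \ref{lem4.5} to obtain a totally tamely ramified extension $L/K_\fY$ of degree dividing $q^2 - 1$ over which $\phi$ has good reduction. The residue field of $L$ is $\F_\fy$; denote the reduction of $\phi$ by $\bar{\phi}$.

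Next, apply the congruence machinery preceding the theorem with $s = \deg(\fp)$ and $m = 2(q^2 - 1)$. By Theorem \ref{thm28} together with the Riemann-hypothesis input used in the proof of Proposition \ref{propP(0)}, the roots of $P_{\bar\phi, \F_\fy}(X)$ lie in $\cW(\fy)$, so $a(dm) \in \cC(\fy)$. Meanwhile Propositions \ref{propP(X)cong} and \ref{prop5.1}, together with the identification $\varrho_{\phi, \fp}(\Fr_\fY^{dm}) = \varrho_{\bar\phi, \fp}(\Fr_{\F_\fy}^{dm})$ coming from the total ramification of $L$ over $F_\fy$, yield
\[
a(dm) \equiv (\eps + \eps^{-1})\bar\fy^m \pmod \fp,
\]
where $\eps \in \F_\fp^{(2)} \cong \F_{q^{2s}}$ satisfies $\eps^{1+q^s} = 1$. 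Since $\eps + \eps^{-1} \in \F_{q^s}$ and $a(dm), \fy^m \in A$, this congruence says that $\fp$ divides $\Nr_{\F_{q^s}(T)/F}\bigl(a(dm) - (\eps + \eps^{-1})\fy^m\bigr)$ in $A$, an element of $\cD(\fy, s)$.

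The hypothesis $\fp \notin \cP(\fy, s)$ then forces this norm to vanish, i.e., $a(dm) = (\eps + \eps^{-1})\fy^m$ as elements of $A$; in particular $\fy \mid a(dm)$. Lemma \ref{lem5.4} gives $a(1) = 0$, and then Corollary \ref{corP(X)overF_y} forces the Frobenius minimal polynomial of $\bar\phi$ over $\F_\fy$ to be $X^2 - \mu\fy$ for some $\mu \in \F_q^\times$. Hence the field $\tF$ from Theorem \ref{thm28} equals $F(\sqrt{\mu\fy})$; by part (5) of that theorem, $\tF$ embeds into $D$, and being of degree $d = 2$ it is maximal commutative in $D$ and therefore splits $D$, contradicting the final hypothesis. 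Since the analytic and representation-theoretic heavy lifting — potentially good reduction, the canonical character, and the characteristic-polynomial congruence — was done in earlier sections, the only genuinely delicate point is the passage from the mod-$\fp$ congruence (an identity in $\F_\fp^{(2)}$) to divisibility of an honest element of $A$ by $\fp$, which is what the norm to $F$ and the definition of $\cP(\fy, s)$ are arranged to accomplish.
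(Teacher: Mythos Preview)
Your proof is correct and follows essentially the same route as the paper: the argument for Theorem~\ref{thmMain2} in the paper is precisely the discussion immediately preceding the theorem statement, and you have reproduced it faithfully, including the reduction to $\bar\phi$ over $\F_\fy$ via Proposition~\ref{lem4.5}, the congruence $a(dm)\equiv(\eps+\eps^{-1})\bar\fy^m\pmod\fp$, the passage to the norm in $\cD(\fy,s)$, the use of Lemma~\ref{lem5.4} to force $a(1)=0$, and the final contradiction via $\tF=F(\sqrt{\mu\fy})$ splitting $D$.
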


	Note that the last assumption $D\otimes_F F(\sqrt{\mu \fy})\not\cong M_2(F(\sqrt{\mu \fy}))$
	is equivalent to the existence of $\fq\in \Ram(D)$ which splits in $F(\sqrt{\mu \fy})$. 
	Also, $\infty$ should not split in $\tF$, which is equivalent to $\mu$ being a non-square in $\F_q^\times$ when 
	$\deg(\fy)$ is even and $q$ is odd. 

\begin{rem}
	Theorem \ref{thmMain1} specialized to $d=2$ is a weaker theorem than Theorem \ref{thmMain2} because $\cP(\fy, s)$ 
	is a smaller set than $\cP'(\fy, s)$. For example, if $q=3$, $\fy=T$ and $s=2$, then $T^2+T+2$ and $T^2+2T+2$ 
	are not in $\cP(\fy, 2)$ but they are both in $\cP'(\fy, 2)$. 
\end{rem}

\begin{example}\label{example5.9}
	Let $q=3$, $d=2$, $\fy=T$. Let 
	$$
	S:=\{T^2+T+2, \quad T^2+2T+2, \quad T^3+T^2+2, \quad T^3+2T^2+1\}. 
	$$
	A computer calculation shows that if $\fp\in S$, then $\fp\not\in \cP(\fy, \deg(\fp))$. 
	
	Let $\fq$ be a prime different from $\fp$ which splits in both $F(\sqrt{T})$ and $F(\sqrt{-T})$, i.e., 
	$\Leg{\pm T}{\fq}= 1$. For example, one can take $\fq=T^2+1.$ 
	Let $D$ be a quaternion algebra over $F$ with $\Ram(D)=\{\fp, \fq\}$. Since 
	$\fq$ splits in $F(\sqrt{\pm T})$, the last assumption of Theorem \ref{thmMain2} is satisfied, i.e., $D\otimes F(\sqrt{\pm T})
	\not\cong M_2(F(\sqrt{\pm T}))$. 
	
	Let $K=F(\sqrt{T\fm})$, where $\fm\in A$ is a square-free, but not necessarily monic, polynomial 
	not divisible by $T$. If $\fm$ is chosen so that $\Leg{T\fm}{\fp}\neq 1$ and $\Leg{T\fm}{\fq}\neq 1$, then 
	$K$ splits $D$. (For example, one can take $\fm=\fp\fq\fm_1$, where $\fm_1$ is an arbitrary 
	square-free polynomial coprime to $T\fp\fq$.) With previous choices, Theorem \ref{thmMain2} implies that $X^D(K)=\emptyset$. 
\end{example}

\begin{example}\label{example5.10}
		Let $q=5$, $d=2$, $\fy=T$. Let 
	\begin{align*}
	S:=\{& T^3+2T+4, \quad T^3+3T+3, \quad T^3+T^2+T+4, \quad T^3+T^2+3T+1,  \\ 
	& T^3+2T^2+T+3,  \quad T^3+2T^2+2T+3, \quad T^3+3T^2+2T+3, \\ 
	& T^3+3T^2+4T+3,  \quad  T^3+4T^2+T+1, \quad   T^3+4T^2+3T+4\}. 
	\end{align*}
	A computer calculation shows that if $\fp\in S$, then $\fp\not\in \cP(\fy, \deg(\fp))$. Fix a prime $\fp\in S$. 
	
	Let $\fq\lhd A$ be a prime such that 
	\begin{itemize}
		\item[(i)] $\fq\neq T, \fp$; 
		\item[(ii)] $\deg(\fq)$ is odd; 
		\item[(iii)] $\Leg{\fp}{T}=-\Leg{\fq}{T}$. 
	\end{itemize}
	Let $D$ be a quaternion algebra over $F$ with $\Ram(D)=\{\fp, \fq\}$. We claim that $F(\sqrt{\mu T})$ does not split 
	$D$ for any $\mu\in \F_q^\times$. It is enough to prove this for $\mu=1, 2$ (since $3=2\cdot 4$ and $4=1\cdot 4$). Assume first that   
	$\Leg{\fp}{T}=1$. Then by Quadratic Reciprocity $\Leg{T}{\fp}=\Leg{\fp}{T} (-1)^{\frac{5-1}{2}\deg(T)\deg(\fp)}=1$. 
	Hence $\fp$ splits in $F(\sqrt{T})$, and therefore $D\otimes F(\sqrt{T})\not\cong M_2(F(\sqrt{T}))$. Since $\Leg{\fq}{T}=-1$, 
	we have $\Leg{T}{\fq}=-1$. Since $\deg(\fq)$ is odd and $2\in \F_q^\times$ is not a square, we have $\Leg{2}{\fq}=-1$. Therefore 
	$\Leg{2T}{\fq}=1$, and so $\fq$ splits in $F(\sqrt{2T})$. As before, this implies $D\otimes F(\sqrt{2T})\not\cong M_2(F(\sqrt{2T}))$. 
	If $\Leg{\fp}{T}=-1$, then a similar argument applies. 
	
	Now, as in Example \ref{example5.9}, let $K=F(\sqrt{T\fm})$, where $\fm\in A$ is a square-free, but not necessarily monic, polynomial 
	not divisible by $T$. If $\fm$ is chosen so that $\Leg{T\fm}{\fp}\neq 1$ and $\Leg{T\fm}{\fq}\neq 1$, then 
	$K$ splits $D$. With previous choices, Theorem \ref{thmMain2} implies that $X^D(K)=\emptyset$. 
\end{example}


\section{Counterexamples to the Hasse principle}\label{sHP}

In this section, we extend Examples \ref{example5.4}, \ref{example5.9}, \ref{example5.10} 
to construct explicit examples of curves violating the Hasse principle. The main auxiliary tool that we will use 
are the results from \cite{PapikianCrelle2} on the existence of local points on Drinfeld-Stuhler curves, which 
are the function field analogues of the results of Jordan and Livn\'e for Shimura curves \cite{JLLocal}. 
For the convenience of the reader, we summarize these results specialized to the case that will be of particular interest for us. 

Let $K/F$ be a quadratic extension. Let $D$ be a quaternion algebra over $F$ with $\Ram(D)=\{\fp, \fq\}$, where $\fp$ and 
$\fq$ are two distinct primes. For a place $v$ of $K$ denote by $K_v$ the completion of $K$ at $v$. 

\begin{thm}\label{thmInfBad}
	Let $\tinf$ be a place of $K$ over $\infty$. 
		\begin{enumerate}
			\item If $\infty$ does not split in $K$, then $X^D(K_{\tinf})\neq \emptyset$. 
			\item If $\infty$ splits in $K$, then $X^D(K_{\tinf})=\emptyset$ if and only if either $\deg(\fp)$ or $\deg(\fq)$ is even. 
		\end{enumerate}
\end{thm}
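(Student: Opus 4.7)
The plan is to use the rigid-analytic uniformization of $X^D$ at the place $\infty$, available because $D$ is a division algebra split at $\infty$: one has $X^D\otimes_F\Fi\cong \Gamma\bs\Omega$ with $\Omega=\p^1(\Ci)-\p^1(\Fi)$ the Drinfeld upper half plane and $\Gamma\subset\PGL_2(\Fi)$ discrete and cocompact, coming from an $S$-arithmetic unit group of $O_D$ for $S=\{\infty\}$. For part (1), when $\infty$ does not split in $K$, the quadratic extension $K_{\tinf}/\Fi$ embeds into $D\otimes_F\Fi\cong M_2(\Fi)$; the image is a non-split torus of $\PGL_2(\Fi)$ whose pair of Galois-conjugate fixed points $\{z,\bar z\}\subset\Omega(K_{\tinf})$ descends to a single $K_{\tinf}$-rational CM point of the quotient, giving $X^D(K_{\tinf})\neq\emptyset$.

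For part (2), $K_{\tinf}=\Fi$ and $\Omega(\Fi)=\emptyset$ by construction, so any $\Fi$-rational point of $X^D$ must reduce to a smooth $\F_q$-rational point on the special fiber of the Mumford model over the valuation ring of $\Fi$. The plan is to analyze this special fiber as a finite quotient of the Bruhat-Tits tree of $\PGL_2(\Fi)$ decorated by the vertex and edge stabilizers coming from $\Gamma$, and equipped with the $q$-Frobenius action from $\Gal(\Fi^{\un}/\Fi)$. The local invariants $\inv_\fp(D)=\inv_\fq(D)=1/2$, combined with the product formula for Brauer invariants, yield a $\Z/2\Z$-orientation (a $2$-coloring of vertices) on the tree, and the Frobenius preserves or exchanges its two color classes according to whether $\deg(\fp)\deg(\fq)$ is odd or even. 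A case analysis then determines when a Frobenius-fixed component possesses a smooth $\F_q$-rational point surviving the stabilizer action, and this yields exactly the stated criterion: points exist iff both $\deg(\fp)$ and $\deg(\fq)$ are odd.

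The main obstacle is the combinatorial step in (2): one needs an explicit description of the integral Mumford model together with its decorations and Frobenius action, and one must verify that components supporting non-trivial stabilizer automorphisms do not contribute spurious smooth rational points. Both of these points are handled in \cite{PapikianCrelle2}, where the required identification of the dual graph and the tracking of the Frobenius orbits on components and nodes are carried out in detail; once they are in place, the translation into the parity condition on $\deg(\fp),\deg(\fq)$ and the CM construction for part (1) are essentially formal.
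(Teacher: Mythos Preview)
The paper's own proof is a one-line citation of Theorem~5.10 in \cite{PapikianCrelle2}; your proposal ultimately defers to the same source but also tries to sketch its content. For part~(1) your torus argument is correct and in fact more than needed: since $[K_{\tinf}:\Fi]=2$, the set $\Omega(K_{\tinf})=\p^1(K_{\tinf})\setminus\p^1(\Fi)$ is already nonempty, and any point of it maps to a $K_{\tinf}$-point of $\Gamma\backslash\Omega\cong X^D\otimes\Fi$.

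For part~(2), the framework you set up (Mumford model, special fiber dual to $\Gamma\backslash\cT$, vertex and edge stabilizers) is the right one, but the specific mechanism you invoke is not. Because $D$ is \emph{split} at $\infty$, the uniformization $X^D\otimes\Fi\cong\Gamma\backslash\Omega$ with $\Gamma=O_D^\times/\F_q^\times\subset\PGL_2(\Fi)$ holds over $\Fi$ itself; the semistable model lives over $O_{\Fi}$ and its components are indexed by the vertices of $\Gamma\backslash\cT$ with no nontrivial $\Gal(\Fi^{\un}/\Fi)$-permutation. Moreover the bipartite $2$-coloring of $\cT$ is \emph{always} preserved by $\Gamma$ (since $\Nr(O_D^\times)\subset A^\times=\F_q^\times$ has $\infty$-valuation $0$), independently of $\deg(\fp),\deg(\fq)$, so no Frobenius exchange of color classes can account for the parity condition. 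You appear to be importing the Cerednik--Drinfeld picture valid at a \emph{ramified} prime of $D$, where a genuine Frobenius twist is present; at $\infty$ it is absent.

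The parity condition instead enters through the torsion of $\Gamma$. A nontrivial finite-order element of $\Gamma$ comes precisely from an embedding $\F_{q^2}\hookrightarrow O_D$, equivalently $\F_{q^2}(T)\hookrightarrow D$; this holds iff neither prime in $\Ram(D)=\{\fp,\fq\}$ splits in the constant extension $\F_{q^2}(T)/F$, i.e.\ iff both $\deg(\fp)$ and $\deg(\fq)$ are odd. When $\Gamma$ is torsion-free it is Schottky, and (using that $\Gamma$ preserves types) every vertex of $\Gamma\backslash\cT$ has valence $q+1$; each special-fiber component then carries exactly $q+1$ nodes and hence no smooth $\F_q$-point, so $X^D(\Fi)=\emptyset$. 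When $\Gamma$ has torsion, a vertex stabilizer $\cong\F_{q^2}^\times/\F_q^\times$ acts transitively on the $q+1$ incident edges, and the resulting components admit smooth $\F_q$-points which lift by Hensel. This stabilizer analysis, not a Frobenius/coloring argument, is what is carried out in \cite{PapikianCrelle2}.
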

\begin{proof}
	This is a consequence of Theorem 5.10 in \cite{PapikianCrelle2}. 
\end{proof}

\begin{rem}
	Assume $q$ is odd and $K=F(\sqrt{\fd})$ for a square-free polynomial $\fd\in A$. It is easy to show that $\infty$ splits in $K$ 
	if and only if $\deg(\fd)$ is even and its leading coefficient is a square in $\F_q^\times$. 
\end{rem}

\begin{thm}\label{thmFinBad} Let $\fP$ be a place of $K$ over $\fp$. 
	Let $f$ and $e$ be the residual degree and the ramification index of $\fP$ over $\fp$, respectively.  
	\begin{enumerate}
		\item If $f=2$, then $X^D(K_\fP)\neq \emptyset$. 
		\item If $e=2$, then $X^D(K_\fP)= \emptyset$ if and only if in every quadratic extension $F(\sqrt{\mu \fp})$, with 
		$\mu\in \F_q^\times$, at least one of the places $\fq, \infty$ splits. 
		\item If $\fp$ splits in $K$, then $X^D(K_\fP)= \emptyset$. 
	\end{enumerate}
\end{thm}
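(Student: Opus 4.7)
The plan is to deduce all three parts from the Cerednik--Drinfeld-style $\fp$-adic uniformization of $X^D$ developed in \cite{BS}, combined with the local point analysis in \cite{PapikianCrelle2} (these are the function-field analogues of Jordan--Livn\'e \cite{JLLocal}). Let $D'$ be the quaternion algebra over $F$ obtained from $D$ by swapping the invariants at $\fp$ and $\infty$, so that $\Ram(D')=\{\fq,\infty\}$; since this set has even cardinality, $D'$ is a division algebra. Fix a maximal $A[1/\fp]$-order $O_{D'}$ and let $\Gamma\subset \PGL_2(F_\fp)$ be the image of $O_{D'}^\times$. Then the theorem of \cite{BS} identifies $(X^D\otimes \hF_\fp^{\un})^{\rig}$ with a finite disjoint union of quotients of the Drinfeld half-plane $\Omega\times\Z$ by $\Gamma$, on which the Frobenius of $F_\fp^{\un}/F_\fp$ acts by translating the $\Z$-factor by $1$ while acting naturally on $\Omega$.

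For part (1), $K_\fP=F_\fp^{(2)}$ is the unramified quadratic extension, so $K_\fP\hF_\fp^{\un}=\hF_\fp^{\un}$ and the Frobenius of $K_\fP^{\un}/K_\fP$ is the square of the Frobenius of $F_\fp^{\un}/F_\fp$; in particular it acts trivially on the $\Z$-factor and naturally on $\Omega(K_\fP)=K_\fP\setminus F_\fp$. Any element of this difference set yields a $K_\fP$-rational point on $X^D$, proving $X^D(K_\fP)\neq\emptyset$. For part (3), $K_\fP=F_\fp$, and a $K_\fP$-point would correspond to a class in $\Gamma\bs(\Omega(\Ci)\times \Z)$ fixed by Frobenius; but Frobenius shifts the $\Z$-component by $1$, and no element of $\Gamma\subset \PGL_2(F_\fp)$ can compensate for this shift (a $\Gamma$-orbit fixed by the combined action would force $D'$ to split at $\fp$, contradicting that $D'$ is ramified only at $\fq$ and $\infty$). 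Hence $X^D(K_\fP)=\emptyset$.

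Part (2) is the main content. Writing $K_\fP=F_\fp(\sqrt{\mu\fp})$ for some $\mu\in\F_q^\times$ (up to squares), a $K_\fP$-point lifts to an $F_\fp^{(2)}(\sqrt{\mu\fp})$-point that is invariant under the combined action of Frobenius and the nontrivial Galois involution of the totally ramified extension. By the standard fixed-point analysis on the Bruhat--Tits tree of $\PGL_2(F_\fp)$, such a class exists iff an order of $F(\sqrt{\mu\fp})$ admits an optimal embedding into $O_{D'}$ compatible with the level structure at $\fq$ and $\infty$. By the Albert--Brauer--Hasse--Noether theorem, $F(\sqrt{\mu\fp})$ embeds into $D'$ iff neither $\fq$ nor $\infty$ splits in $F(\sqrt{\mu\fp})$, and the compatibility of local orders follows automatically because $O_{D'}$ is maximal and $F(\sqrt{\mu\fp})/F$ is unramified away from $\{\fp,\infty\}$. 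Therefore $X^D(K_\fP)\neq\emptyset$ iff there exists $\mu\in\F_q^\times$ such that both $\fq$ and $\infty$ are non-split in $F(\sqrt{\mu\fp})$, which is the contrapositive of the stated criterion.

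The main obstacle is part (2): one must justify carefully that the ``optimal embedding counting'' on the Bruhat--Tits tree reduces cleanly to the global embedding question for $F(\sqrt{\mu\fp})\hookrightarrow D'$, with no extraneous local obstructions at primes other than $\fq$ and $\infty$. This is precisely where one appeals to the detailed analysis carried out in \cite[\S\S4--5]{PapikianCrelle2} (analogous to the Jordan--Livn\'e computation), which verifies that the only obstructions to such embeddings come from the ramification of $D'$ and thus land at $\fq$ and $\infty$, exactly as in the statement.
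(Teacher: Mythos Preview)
The paper's own proof is a single sentence: ``This is a consequence of Theorem 4.1 in \cite{PapikianCrelle2}.'' Your proposal is not a genuinely different route; it is a sketch of the uniformization argument that lies behind that citation, and for the substantive part (2) you explicitly defer to ``the detailed analysis carried out in \cite[\S\S4--5]{PapikianCrelle2}.'' So at the level of what is actually established, your proposal and the paper's proof coincide.

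That said, the sketch contains slips that you should be aware of. In part (1) you assert that Frobenius of $K_\fP^{\un}/K_\fP$, being the square of Frobenius of $F_\fp^{\un}/F_\fp$, ``acts trivially on the $\Z$-factor''; but a shift by $+1$ squared is a shift by $+2$, not by $0$. The correct mechanism is that the $\Z$-shift by $+2$ is absorbed by the $\Gamma$-action (elements of $O_{D'}^\times$ with $\ord_\fp(\Nr)=2$ exist), so that over $F_\fp^{(2)}$ the curve becomes an honest Mumford quotient of $\Omega$, which then has $F_\fp^{(2)}$-points. In part (3) your contradiction ``would force $D'$ to split at $\fp$'' is not a contradiction at all: you yourself set $\Ram(D')=\{\fq,\infty\}$, so $D'$ \emph{is} split at $\fp$. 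The genuine obstruction is that an $F_\fp$-point would require an element $\gamma w$ (with $w$ the Frobenius-twist element of odd $\fp$-valuation) to fix a point of $\Omega$; since $\gamma w$ lies in a quaternion \emph{division} algebra over $F$, it generates a quadratic field in which $\fp$ does not split, and hence has no fixed point in $\p^1(F_\fp)$, let alone in $\Omega(F_\fp)$.

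These issues are repairable, but since the argument rests on \cite{PapikianCrelle2} in any case, the paper's one-line citation is the appropriate presentation here; your sketch is best viewed as informal motivation rather than an alternative proof.
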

\begin{proof}
	This is a consequence of Theorem 4.1 in \cite{PapikianCrelle2}. 
\end{proof}

\begin{rem}
	In this section, we are only interested in quadratic extensions $K$ which split $D$. In such 
	extensions, $\fp$ or $\fq$ cannot split, so (3) of Theorem \ref{thmFinBad} does not occur. 
\end{rem}

\begin{thm}\label{thmGoodPlaces} Let $\fl\not\in \{\fp, \fq, \infty\}$ be a place of $F$ and $\fL$ be a place of $K$ over $\fl$. 
	Let $f$ be the residual degree of $\fL$ over $\fl$. 
	\begin{enumerate}
		\item If $f=2$, then $X^D(K_\fL)\neq \emptyset$. 
		\item If $f=1$, then $X^D(K_{\fL})=\emptyset$ 
	if and only if for all $a\in A$ and $c\in \F_q^\times$ such that 
		$a^2+c\fl$ is not a square in $A$ at least one of the places $\{\fp, \fq, \infty\}$ splits in $F(\sqrt{a^2+c\fl})$.
	\end{enumerate}
\end{thm}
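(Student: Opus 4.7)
The strategy is to combine smooth reduction of $X^D$ at $\fL$ with a Honda--Tate classification of Drinfeld-Stuhler modules over the residue field $\F_\fL$. Since $\fl\notin\Ram(D)\cup\{\infty\}$, the Drinfeld-Stuhler variety $X^D$ extends to a smooth proper model $\sX^D$ over $O_{K_\fL}$ (from the construction in \cite{LRS}). By properness $X^D(K_\fL)=\sX^D(O_{K_\fL})$, and by smoothness together with Hensel's lemma the reduction map $\sX^D(O_{K_\fL})\to\sX^D(\F_\fL)$ is surjective. Because $O_D\otimes_A\F_\fL\cong M_2(\F_\fL)$, Theorem \ref{thmFMFD} applies over $\F_\fL$, so every $\F_\fL$-point corresponds to an isomorphism class of Drinfeld-Stuhler $O_D$-modules defined over $\F_\fL$. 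The question thus reduces to the existence of such a module over $\F_\fL$.

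For part (1), where $\F_\fL=\F_\fl^{(2)}$, I would exhibit a supersingular module whose Frobenius is the central element $\pi=\fl\in F$, so that $\tF=F$ in the notation of Theorem \ref{thm28}. Its endomorphism algebra $D'$ is a quaternion algebra over $F$ whose invariants, by Theorem \ref{thm28}(4), are $1/2$ at each of $\fp,\fq,\fl,\infty$ and $0$ elsewhere; the ramification set has even cardinality, so $D'$ exists as a Brauer class, and the Honda--Tate theorem for Drinfeld-Stuhler modules (implicit in \cite[\S 9]{LRS}, made explicit in \cite[\S 5]{PapRMS}) produces the desired module over $\F_\fl^{(2)}$.

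For part (2), where $\F_\fL=\F_\fl$, Corollary \ref{corP(X)overF_y} forces the Frobenius of any Drinfeld-Stuhler module over $\F_\fl$ to satisfy $X^2-aX+\mu\fl$ with $a\in A$, $\deg(a)\leq\deg(\fl)/2$ and $\mu\in\F_q^\times$, generating the quadratic extension $\tF=F(\sqrt{a^2-4\mu\fl})=F(\sqrt{a^2+c\fl})$ for $c=-4\mu\in\F_q^\times$ (in odd characteristic; characteristic two requires an Artin--Schreier variant). By Theorem \ref{thm28}(5), $\tF$ must be imaginary (equivalently, $\infty$ does not split in $\tF$) and must embed into $D$ (equivalently, no prime of $\Ram(D)=\{\fp,\fq\}$ splits in $\tF$), and Honda--Tate shows conversely that every such $\tF$ arises from some $\phi$. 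Finally, enlarging the range of $a$ from $\deg(a)\leq\deg(\fl)/2$ to all of $A$ does not affect the stated condition: when $\deg(a)>\deg(\fl)/2$, the polynomial $a^2+c\fl$ has even degree $2\deg(a)$ with leading coefficient $(\mathrm{lc}\,a)^2$, which is a square in $\F_q^\times$, so $\infty$ automatically splits in $F(\sqrt{a^2+c\fl})$ and the condition of the theorem holds vacuously for such pairs $(a,c)$.

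The main obstacle is the careful deployment of Honda--Tate at a good-reduction prime, in particular verifying that the invariants of $D'$ prescribed by Theorem \ref{thm28}(4) give an even-cardinality ramification set precisely when $\tF$ is imaginary and embeds into $D$. This reduces to a case analysis over how the primes $\fp,\fq,\infty,\fl$ decompose in $\tF$, combined with the observation that the local conditions at $\fl$ are automatic because $\inv_\fl(D)=0$.
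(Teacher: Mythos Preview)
The paper does not give an independent argument here; it simply records that the statement is a special case of \cite[Thm.~3.1]{PapikianCrelle2}. Your proposal reconstructs the argument that underlies that reference---smooth proper reduction at $\fl$ reduces the question to $\sX^D(\F_\fL)$, Theorem~\ref{thmFMFD} identifies $\F_\fL$-points with Drinfeld--Stuhler modules over $\F_\fL$, and the Honda--Tate classification (Theorem~\ref{thm28} together with its converse, as in \cite[\S9]{LRS} and \cite[\S5]{PapRMS}) describes those modules in terms of Weil $\fl$-numbers whose field $\tF$ is imaginary and embeds into $D$. Your handling of the passage from $\deg(a)\le\deg(\fl)/2$ to all $a\in A$, and the observation that $\fl$ is never inert in $F(\sqrt{a^2+c\fl})$ (since $a^2+c\fl\equiv a^2\pmod{\fl}$), are both correct and are exactly the points made in Remark~\ref{rem7.6} and implicit in the cited theorem. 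So your approach is sound and matches what the paper invokes.

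One small imprecision worth flagging in your closing paragraph: for part~(2) you have $[\tF:F]=d=2$, so by Theorem~\ref{thm28}(4) the algebra $D'$ has dimension $(d/[\tF:F])^2=1$, i.e.\ $D'=\tF$. The Honda--Tate existence condition is therefore not that the ramification set of $D'$ have even cardinality (this is automatic by the reciprocity law, as the computation $\sum_{\tilde v}\inv_{\tilde v}(D')=-[\tF:F]\sum_{v\neq\infty,\fl}\inv_v(D)\in\Z$ shows), but that \emph{every} invariant of $D'$ vanish. Unwinding Theorem~\ref{thm28}(4), this says precisely that $[\tF_{\tilde v}:F_v]\cdot\tfrac12\in\Z$ for each $\tilde v$ over $\fp$ or $\fq$, i.e.\ neither $\fp$ nor $\fq$ splits in $\tF$. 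You already state this correctly in the body of the argument; only the final summary slightly mislabels which local condition is doing the work. For part~(1), where $\tF=F$ and $D'$ is a genuine quaternion algebra, the even-cardinality formulation is the right one.
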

\begin{proof}
	This is a consequence of Theorem 3.1 in \cite{PapikianCrelle2}. 
\end{proof}

\begin{rem}\label{rem7.6}
	If $q$ is odd, then to determine whether $X^D(K_\fL)= \emptyset$ one needs to examine only a finite number 
	of quadratic extensions $F(\sqrt{a^2+c\fl})$. Indeed, if $\deg(a)> \deg(\fl)/2$, then 
	$a^2+c\fl$ has even degree and its leading coefficient is a square in $\F_q^\times$, so $\infty$ splits in $F(\sqrt{a^2+c\fl})$. 
	If $q$ is even, then $X^D(K_\fL)\neq \emptyset$ since the places $\{\fp, \fq, \infty\}$ ramify in the inseparable 
	extension $F(\sqrt{\fl})$. 
\end{rem}

Let $\fl$ be as in Theorem \ref{thmGoodPlaces}. By \cite{LRS} and \cite{Hausberger}, $X^D$ has 
good reduction at $\fl$ (and bad reduction at $\fp, \fq$, $\infty$).  
From a geometric version of Hensel's Lemma (see \cite{JLLocal}) it follows that 
$X^D(K_\fL)\neq \emptyset$ if and only if $X^D(k_\fL)\neq \emptyset$, where $k_\fL$ is the residue field at $\fL$. 
If $\fl$ is not inert in $K$ (i.e., $f=1$), then $k_{\fL}=\F_\fl$. Now by the Weil bound 
$$
\# X^D(\F_\fl)\geq |\fl|+1-2g(X^D)\sqrt{|\fl|}, 
$$
where $g(X^D)$ is the genus of $X^D$. On the other hand, by \cite{PapGenus}, 
\begin{equation}\label{eqGenus}
g(X^D)=1+\frac{(|\fp|-1)(|\fq|-1)}{q^2-1}-
\begin{cases}
0 & \text{if  $\deg(\fp)$ or $\deg(\fq)$ is even}\\
\frac{2q}{q+1} & \text{if  $\deg(\fp)$ and $\deg(\fq)$ are odd}
\end{cases}. 
\end{equation}
This implies that $X^D(\F_\fl)\neq \emptyset$ once $\deg(\fl)\geq 2(\deg(\fp)+\deg(\fq))$.  Hence 
to decide whether a given $X^D$ has rational points over all completions of $K$, we need to examine 
only finitely many places. 

\begin{example}\label{example7.7} Let the setting be as in Example \ref{example5.4}. That is 
	$q=3$, $\fp=T^6 + 2T^4 + T^2 + 2T + 2$, and $K=F(\sqrt{\fd})$, where $\fd=T^{13}+2T+1$. 
	
	Next, we take $\fq=T^2 + T + 2$ as a prime of smallest possible degree which is inert in $K$ 
($T$ and $T\pm 1$ split in $K$). Let $D$ be the quaternion algebra ramified at $\fp$ and $\fq$, and split 
	at all other places of $F$. Since $\fp$ and $\fq$ are inert in $K$, we have $D\otimes K\cong M_2(K)$. 
	We will show that $X^D(K_v)\neq \emptyset$ for all places $v$ of $K$. Since, by Example \ref{example5.4},  $X^D(K)=\emptyset$, 
	this gives an explicit example of a Drinfeld-Stuhler curve violating the Hasse principle over $K$. 
	
	The place $\infty$ of $F$ ramifies in $K$. By Theorem \ref{thmInfBad},  we have $X^D(K_{\widetilde{\infty}})\neq \emptyset$. 
	
	The places $\fp$ and $\fq$ are inert in $K$. By Theorem \ref{thmFinBad}, we have 
	$X^D(K_{\fP})\neq \emptyset$ and $X^D(K_{\fQ})\neq \emptyset$ for the unique places $\fP$ and $\fQ$ of $K$ 
	over $\fp$ and $\fq$, respectively. 
	
	It remains to examine places of $F$ where $X^D$ has good reduction. By Theorem \ref{thmGoodPlaces}, we 
	can restrict ourselves to those $\fl$ that split or ramify in $K$. Using \eqref{eqGenus} one computes that $g(X^D)=q^6$.  
	From the Weil bound, $X^D(\F_\fl)\neq \emptyset$ once $\deg(\fl)\geq 13$. 
	Hence we need to examine $X^D(F_\fl)$ for primes $\fl\lhd A$ 
	of degree $\leq 12$ which split in $K$. 
	By Theorem \ref{thmGoodPlaces}, to show that $X^D(F_\fl)\neq \emptyset$  
	we need to find $a\in A$ with $\deg(a)\leq \deg(\fl)/2$ and $c\in \F_q^\times$ such that 
	\begin{itemize}
		\item $a^2+c\fl$ has odd degree or its leading coefficient is not a square in $\F_q^\times$, and 
		\item $$\Leg{a^2+c\fl}{\fp}\neq 1, \qquad \Leg{a^2+c\fl}{\fq}\neq 1.$$ 
	\end{itemize} 
	For this we use computer calculations. (It is important here that places $\fl$ have relatively small degree, 
	which is a consequence of choosing $\fq$ of small degree so that the genus of $X^D$ is small.)  
	Using \texttt{Magma}, we simply run through all possible $a$ and $c$ and check the conditions. 
	It turns out that $a$ and $c$ for which the necessary conditions are satisfied always exist.  
	For example, assume $\deg(\fl)=1$, i.e., $\fl=T, T\pm 1$. All these primes split in $K$, and it is easy to check that for each such prime 
	one can obtain $T$ or $-T$ as 
	$a^2+c\fl$ for some $a\in \F_3$ and $c\in \F_3^\times$. On the other hand, 
	$$
	\Leg{\pm T}{\fp}= \Leg{\pm T}{\fq}=-1. 
	$$
\end{example}

 \begin{example}\label{example7.8}
Let $q=3$, 
$$
\fp\in \{T^2+T+2, \quad T^2+2T+2\}, \qquad \fq=T^2+1,
$$ 	
and $K=F(\sqrt{\fd})$, where $\fd=\pm T\fp\fq\fm$ for some monic square-free polynomial $\fm$ coprime to $T\fp\fq$. 
Further assume that either $\deg(\fm)$ is even or the leading coefficient of $\fd$ is $-1$. Note that 
this last assumption implies that $\infty$ does not split in $K$.  Let $D$ 
be the quaternion algebra over $F$ with $\Ram(D)=\{\fp, \fq\}$. By Example \ref{example5.9}, $X^D(K)=\emptyset$. 

Since $\infty$ does not split in $K$, by Theorem \ref{thmInfBad},  we have $X^D(K_{\widetilde{\infty}})\neq \emptyset$. 

The places $\fp$ and $\fq$ ramify in $K$. By Theorem \ref{thmFinBad}, we have 
$X^D(K_{\fP})\neq \emptyset$ and $X^D(K_{\fQ})\neq \emptyset$ for the unique places $\fP$ and $\fQ$ of $K$ 
over $\fp$ and $\fq$, respectively. (Note that $\fq$ and $\infty$ are inert in $F(\sqrt{-\fp})$, and similarly 
$\fp$ and $\infty$ are inert in $F(\sqrt{-\fq})$.) 

As in Example \ref{example7.7}, we compute $g(X^D)=q^2$ and use the Weil bound to concludes that $X^D(F_\fl)\neq \emptyset$ 
if $\deg(\fl)\geq 6$. Finally, we examine places $\fl$ of degree $\leq 5$ using a computer program. For the following $\fm$ 
our program confirms that the conditions for the existence of $K_\fL$-rational points are satisfied for all $\fl$: 
\begin{align*}
\fm\in \{& 1, \quad T+1, \quad T+2,  \quad T^2+T+2,  \quad T^2+2T+2,  \quad T^3 +T^2 +2,  \\ 
&T^3 +2T^2 +1,    \quad T^3 + 2T+1,  \quad  T^3 +2T+2,  \quad T^3 +T^2 +2T+1, \\ 
& T^3 +T^2 +T+2,     \quad T^4 +T+2,   \quad T^4 +2T+2,   \quad T^4 + T^2 +2, \\ 
 & T^4 +2T^2 +2, \quad  T^4 +T^2 +T+1, \quad  T^4 +T^2 +2T+1\}\\ 
& \text{(where the case $\fm=\fp$ is excluded).}
\end{align*}
Thus, we obtain counterexamples to the Hasse principle. 
 \end{example}

\begin{rem}
	In Example \ref{example5.9} we also had $\fp\in \{T^3+T^2+2, T^3+2T^2+1\}$ and $\fq=T^2+1$ 
as possible ramification places of a quaternion algebra $D$ for which $X^D(K)=\emptyset$, where $K=F(\sqrt{\pm T\fp\fq\fm})$. 
But these $\fp$ do not lead to counterexamples to the Hasse principle since in these cases $X^D(K_\fQ)=\emptyset$. 
This follows from Theorem \ref{thmFinBad}, as $\infty$ splits in $F(\sqrt{\fq})$ and $\fp$ splits in $F(\sqrt{-\fq})$ 
\end{rem}

\begin{example} 
	Let $q=5$, $\fp= T^3+2T+4$, and $\fq=T+2$.  Note that this puts us in the setup of Example \ref{example5.10} since 
	$\deg(\fq)$ is odd and $\Leg{\fp}{T}=1=-\Leg{\fq}{T}$. Let $\fm\in A$ be a monic square-free polynomial coprime to $T\fp\fq$.  
	Let $\fd=T\fp\fq\fm$ or $\fd=2T\fp\fq\fm$ if $\deg(\fm)$ is even, and $\fd=2T\fp\fq\fm$ if $\deg(\fm)$ is odd. 
	Let $D$ be the quaternion algebra over $F$ with $\Ram(D)=\{\fp, \fq\}$. Then, by Example \ref{example5.10}, 
	we have $X^D(K)=\emptyset$ for $K=F(\sqrt{\fd})$.
	
	Since $\infty$ does not split in $K$, we have $X^D(K_{\widetilde{\infty}})\neq \emptyset$. 
	
	The places $\fp$ and $\fq$ ramify in $K$. 
	Since $\deg(\fp)=3$ is odd, $\infty$ ramifies in both $F(\sqrt{\fp})$ and $F(\sqrt{2\fp})$. The prime 
	$\fq$ is inert in $F(\sqrt{\fp})$. Thus, $X^D(K_\fP)\neq \emptyset$ 
	for the unique prime $\fP$ of $K$ over $\fp$. Similarly, $\infty$ ramifies in both $F(\sqrt{\fq})$ and $F(\sqrt{2\fq})$, and 
	$\fp$ is inert in $F(\sqrt{\fq})$. Thus, $X^D(K_\fQ)\neq \emptyset$. 
	
	Next, we compute $g(X^D)=q(q-1)$ and use the Weil bound to conclude that $X^D(F_\fl)\neq \emptyset$ 
	if $\deg(\fl)\geq 5$. Finally, we examine places $\fl$ of degree $\leq 4$ using a computer program. For the following $\fm$ 
	our program confirms that the conditions for the existence of $K_\fL$-rational points are satisfied for all $\fl$: 
	$$
	\fm\in \{ 1, \quad T+1,  \quad T+3,  \quad T+4,  \quad T^2 +2,  \quad T^2 +3,  \quad T^2 +T+1,  \quad T^2 +T+2\}. 
	$$
	Thus, we obtain counterexamples to the Hasse principle. 
 \end{example}

\begin{rem}
	By Theorem \ref{thmFinBad}, for $\fp\in \Ram(D)$ we have $X^D(F_\fp)=\emptyset$. 
	Thus, $X^D(F)=\emptyset$ but $X^D$ does not violate the Hasse principle over $F$. 
\end{rem}

\begin{rem} If $X^D$ embeds into its Jacobian $J^D$ over a finite extension $K$ of $F$ (e.g., there is 
	a $K$-rational divisor of degree $1$ on $X^D$), then 
	the Brauer-Manin obstruction is the only obstruction to the Hasse principle over $K$. 
	This follows from a result of Poonen and Voloch \cite[Thm. D]{PV}. There are two conditions that 
	need to be satisfied in order to apply \cite{PV}. These conditions are 
	\begin{enumerate}
		\item $J^D$ has no nonzero isotrivial quotients; 
		\item $J^D(F^\sep)[p^\infty]$ is finite ($p$ is the characteristic of $F$). 
	\end{enumerate}
	These are satisfied since $J^D$ has split purely multiplicative reduction at $\infty$ (cf. \cite{BS}, \cite{PapikianCrelle2}). 
	Indeed, the first claim is clear from the theory of N\'eron models. The second claim can be seen from the rigid-analytic uniformization 
	$J^D(\overline{F}_\infty)\cong (\overline{F}_\infty^\times)^{g(X^D)}/\La$ of $J^D$ over $\Fi$ -- the 
	extension of $\Fi$ obtained by adjoining $J^D(\overline{F}_\infty)[p^n]$ is the same as the extension 
	obtained by adjoining $p^n$-th roots of the generators of the lattice $\La$.  
\end{rem}

\subsection*{Acknowledgements} 
Keisuke  Arai thanks Shin Hattori for a discussion and suggestive comments. 
Satoshi Kondo thanks K\"ur{\c s}at Aker for a computation and
Shin Hattori in general. 
Mihran Papikian thanks the National Center for Theoretical Sciences in  Hsinchu, where 
part of this work was carried out, for its hospitality and excellent working conditions.

\bibliographystyle{acm}
\bibliography{DSmodules2.bib}

\begin{thebibliography}{10}

\bibitem{Anderson}
{\sc Anderson, G.~W.}
\newblock {$t$}-motives.
\newblock {\em Duke Math. J. 53}, 2 (1986), 457--502.

\bibitem{Arai1}
{\sc Arai, K.}
\newblock Non-existence of points rational over number fields on {S}himura
  curves.
\newblock {\em Acta Arith. 172}, 3 (2016), 243--250.

\bibitem{Arai2}
{\sc Arai, K.}
\newblock Rational points on {S}himura curves and the {M}anin obstruction.
\newblock {\em Nagoya Math. J. 230\/} (2018), 144--159.

\bibitem{BS}
{\sc Blum, A., and Stuhler, U.}
\newblock Drinfeld modules and elliptic sheaves.
\newblock In {\em Vector bundles on curves---new directions ({C}etraro, 1995)},
  vol.~1649 of {\em Lecture Notes in Math.} Springer, 1997, pp.~110--193.

\bibitem{Clark}
{\sc Clark, P.~L.}
\newblock On the {H}asse principle for {S}himura curves.
\newblock {\em Israel J. Math. 171\/} (2009), 349--365.

\bibitem{Drinfeld}
{\sc Drinfeld, V.~G.}
\newblock Elliptic modules.
\newblock {\em Mat. Sb. (N.S.) 94\/} (1974), 594--627.

\bibitem{GardeynJNT02}
{\sc Gardeyn, F.}
\newblock A {G}alois criterion for good reduction of {$\tau$}-sheaves.
\newblock {\em J. Number Theory 97}, 2 (2002), 447--471.

\bibitem{GardeynJNT03}
{\sc Gardeyn, F.}
\newblock The structure of analytic {$\tau$}-sheaves.
\newblock {\em J. Number Theory 100}, 2 (2003), 332--362.

\bibitem{GekelerFDM}
{\sc Gekeler, E.-U.}
\newblock On finite {D}rinfeld modules.
\newblock {\em J. Algebra 141}, 1 (1991), 187--203.

\bibitem{Goss}
{\sc Goss, D.}
\newblock {\em Basic structures of function field arithmetic}, vol.~35 of {\em
  Ergebnisse der Mathematik und ihrer Grenzgebiete (3)}.
\newblock Springer-Verlag, Berlin, 1996.

\bibitem{Hausberger}
{\sc Hausberger, T.}
\newblock Uniformisation des vari\'{e}t\'{e}s de {L}aumon-{R}apoport-{S}tuhler
  et conjecture de {D}rinfeld-{C}arayol.
\newblock {\em Ann. Inst. Fourier (Grenoble) 55}, 4 (2005), 1285--1371.

\bibitem{HayesCFT}
{\sc Hayes, D.~R.}
\newblock Explicit class field theory in global function fields.
\newblock In {\em Studies in algebra and number theory}, vol.~6 of {\em Adv. in
  Math. Suppl. Stud.} Academic Press, New York-London, 1979, pp.~173--217.

\bibitem{Jordan}
{\sc Jordan, B.~W.}
\newblock Points on {S}himura curves rational over number fields.
\newblock {\em J. Reine Angew. Math. 371\/} (1986), 92--114.

\bibitem{JLLocal}
{\sc Jordan, B.~W., and Livn\'{e}, R.~A.}
\newblock Local {D}iophantine properties of {S}himura curves.
\newblock {\em Math. Ann. 270}, 2 (1985), 235--248.

\bibitem{Lafforgue}
{\sc Lafforgue, L.}
\newblock Chtoucas de {D}rinfeld et conjecture de {R}amanujan-{P}etersson.
\newblock {\em Ast\'erisque}, 243 (1997), ii+329.

\bibitem{LaumonCDV}
{\sc Laumon, G.}
\newblock {\em Cohomology of {D}rinfeld modular varieties. {P}art {I}}, vol.~41
  of {\em Cambridge Studies in Advanced Mathematics}.
\newblock Cambridge University Press, Cambridge, 1996.
\newblock Geometry, counting of points and local harmonic analysis.

\bibitem{LRS}
{\sc Laumon, G., Rapoport, M., and Stuhler, U.}
\newblock {${\sD}$}-elliptic sheaves and the {L}anglands correspondence.
\newblock {\em Invent. Math. 113}, 2 (1993), 217--338.

\bibitem{MazurIsog}
{\sc Mazur, B.}
\newblock Rational isogenies of prime degree (with an appendix by {D}.
  {G}oldfeld).
\newblock {\em Invent. Math. 44}, 2 (1978), 129--162.

\bibitem{PapGenus}
{\sc Papikian, M.}
\newblock Genus formula for modular curves of {$\sD$}-elliptic sheaves.
\newblock {\em Arch. Math. (Basel) 92}, 3 (2009), 237--250.

\bibitem{PapCrelle1}
{\sc Papikian, M.}
\newblock Modular varieties of {$\mathscr{D}$}-elliptic sheaves and the
  {W}eil-{D}eligne bound.
\newblock {\em J. Reine Angew. Math. 626\/} (2009), 115--134.

\bibitem{PapHE}
{\sc Papikian, M.}
\newblock On hyperelliptic modular curves over function fields.
\newblock {\em Arch. Math. (Basel) 92}, 4 (2009), 291--302.

\bibitem{PapikianMZ}
{\sc Papikian, M.}
\newblock Endomorphisms of exceptional {$\sD$}-elliptic sheaves.
\newblock {\em Math. Z. 266}, 2 (2010), 407--423.

\bibitem{PapikianCrelle2}
{\sc Papikian, M.}
\newblock Local {D}iophantine properties of modular curves of
  {$\mathscr{D}$}-elliptic sheaves.
\newblock {\em J. Reine Angew. Math. 664\/} (2012), 115--140.

\bibitem{PapJNTHayes}
{\sc Papikian, M.}
\newblock {$\sD$}-elliptic sheaves and odd {J}acobians.
\newblock {\em J. Number Theory 133}, 3 (2013), 1012--1026.

\bibitem{PapRMS}
{\sc Papikian, M.}
\newblock Drinfeld-{S}tuhler modules.
\newblock {\em Res. Math. Sci. 5}, 4 (2018), Paper No. 40, 33.

\bibitem{PV}
{\sc Poonen, B., and Voloch, J.~F.}
\newblock The {B}rauer-{M}anin obstruction for subvarieties of abelian
  varieties over function fields.
\newblock {\em Ann. of Math. (2) 171}, 1 (2010), 511--532.

\bibitem{Reiner}
{\sc Reiner, I.}
\newblock {\em Maximal orders}, vol.~28 of {\em London Mathematical Society
  Monographs. New Series}.
\newblock The Clarendon Press, Oxford University Press, Oxford, 2003.
\newblock Corrected reprint of the 1975 original, With a foreword by M. J.
  Taylor.

\bibitem{Rosen}
{\sc Rosen, M.}
\newblock {\em Number theory in function fields}, vol.~210 of {\em Graduate
  Texts in Mathematics}.
\newblock Springer-Verlag, New York, 2002.

\bibitem{SerreInventiones}
{\sc Serre, J.-P.}
\newblock Propri\'{e}t\'{e}s galoisiennes des points d'ordre fini des courbes
  elliptiques.
\newblock {\em Invent. Math. 15}, 4 (1972), 259--331.

\bibitem{SerreLF}
{\sc Serre, J.-P.}
\newblock {\em Local fields}, vol.~67 of {\em Graduate Texts in Mathematics}.
\newblock Springer-Verlag, New York-Berlin, 1979.

\bibitem{TaelmanPhD}
{\sc Taelman, L.}
\newblock {\em On $t$-motifs}.
\newblock 2007.
\newblock Thesis (Ph.D.)--The University of Groningen.

\bibitem{Taguchi}
{\sc Taguchi, Y.}
\newblock The {T}ate conjecture for {$t$}-motives.
\newblock {\em Proc. Amer. Math. Soc. 123}, 11 (1995), 3285--3287.

\bibitem{Takahashi}
{\sc Takahashi, T.}
\newblock Good reduction of elliptic modules.
\newblock {\em J. Math. Soc. Japan 34}, 3 (1982), 475--487.

\bibitem{TatePDiv}
{\sc Tate, J.~T.}
\newblock {$p$}-divisible groups.
\newblock In {\em Proc. {C}onf. {L}ocal {F}ields ({D}riebergen, 1966)}.
  Springer, Berlin, 1967, pp.~158--183.

\end{thebibliography}

\end{document}